\definecolor{codegreen}{rgb}{0,0.6,0}
\definecolor{codegray}{rgb}{0.5,0.5,0.5}
\definecolor{codepurple}{rgb}{0.58,0,0.82}
\definecolor{backcolour}{rgb}{0.95,0.95,0.92}
\lstdefinestyle{mystyle}{
  backgroundcolor=\color{backcolour},   commentstyle=\color{codegreen},
  keywordstyle=\color{magenta},
  numberstyle=\tiny\color{codegray},
  stringstyle=\color{codepurple},
  basicstyle=\ttfamily\footnotesize,
  breakatwhitespace=false,         
  breaklines=true,                 
  captionpos=b,                    
  keepspaces=true,                 
  numbers=left,                    
  numbersep=5pt,                  
  showspaces=false,                
  showstringspaces=false,
  showtabs=false,                  
  tabsize=2
}
\newtheorem{theorem1number}{Theorem}
\newtheorem{theorem}{Theorem}[section]
\newtheorem{lemma}[theorem]{Lemma}
\newtheorem*{conjecture*}{Conjecture}
\newtheorem*{theorem*}{Theorem}
\theoremstyle{definition}
\theoremstyle{remark}
\newtheorem*{remark*}{remark}
\newcommand{\on}{\operatorname}
\renewcommand{\geq}{\geqslant}
\renewcommand{\mod}[1]{\on{mod}#1}
\author{\textbf{Runbo Li}}
\address{International Curriculum Center, The High School Affiliated to Renmin University of China, Beijing, China}
\email{runbo.li.carey@gmail.com}
\title[]{An average Brun--Titchmarsh theorem and shifted primes with a large prime factor}
\subjclass[2020]{\textbf{11N05}, \textbf{11N35}, \textbf{11N36}}
\keywords{\textbf{Prime}, \textbf{Sieve methods}, \textbf{Large prime factors}}
\begin{document}

\begin{abstract}
The author studies an average version of Brun--Titchmarsh theorem with large moduli. Using Maynard's recent breakthrough on the Bombieri--Friedlander--Iwaniec type triple convolution estimates, we refine the previous result of Baker and Harman (1996). As an application, we improve a result of Baker and Harman (1998) on shifted primes with a large prime factor, showing that the largest prime factor of $p - 1$ is larger than $p^{0.679}$ for infinitely many primes $p$.
\end{abstract}

\maketitle

\tableofcontents

\section{Introduction}
One of the most important problems in number theory is the twin prime conjecture, which states that there are infinitely many primes $p$ such that $p+2$ is also prime. This conjecture still remains open, and there are three main ways to attack this conjecture. The first way is to study $p+2$ with few primary factors, and the best result is due to Chen \cite{Chen1973}, who proved in 1973 that there are infinitely many primes $p$ such that $p+2$ has at most two prime factors. One can also see \cite{Cai2008}, \cite{Wu2008}, \cite{Lichtman}, \cite{Lichtman3}, \cite{Pascadi2} and the author's preprints \cite{LRB1733} \cite{LRB197} for further refinements. The second way is to study bounded gaps between consecutive primes, and we refer the interested readers to the works of Zhang \cite{ZhangYitang}, Maynard \cite{Maynard600}, Polymath8b \cite{Polymath8b} and Stadlmann \cite{Stadlmann525}.

The third way to attack the twin prime conjecture is to consider the largest prime factor of $p+2$. Let $P^{+}(n)$ denote the largest prime factor of $n$. Since we have $P^{+}(p) = p$, the twin prime conjecture is equivalent to the following statement: There are infinitely many $p$ such that
\begin{equation}
P^{+}(p + 2) > P^{+}(p) = p.
\end{equation}
Or, in general, there are infinitely many $p$ such that
\begin{equation}
P^{+}(p + a) > p
\end{equation}
for a fixed non-zero integer $a$. We don't know how to prove (2), but we can prove some weaker results along this way. Using the Bombieri--Vinogradov Theorem proved in 1965, Goldfeld \cite{Goldfeld1969} showed in 1969 that there are infinitely many $p$ such that
\begin{equation}
P^{+}(p + a) > p^{\boldsymbol{\Theta} - \varepsilon}
\end{equation}
for some $\boldsymbol{\Theta} > \frac{1}{2}$. After Goldfeld, many mathematicians improved the lower bound for $\boldsymbol{\Theta}$. Let $\theta \in \left[\frac{1}{2}, \boldsymbol{\Theta} \right]$. In order to deal with $\theta$ larger than $\frac{1}{2}$, one needs to study an average version of Brun--Titchmarsh inequality, and prove strong upper estimates
\begin{equation}
\pi\left(x; q, a\right) \leqslant \frac{C(\theta) x}{\varphi(q) \log x}
\end{equation}
with an upper constant $C(\theta)$ for almost all moduli $q \leqslant x^{\theta}$ (with at most $O\left(x^{\theta} (\log x)^{-A} \right)$ exceptions). The progression of records for $\boldsymbol{\Theta}$ can be seen in the following table.

\begin{center}
\begin{tabular}{|c|c|c|c|c|}
\hline \boldmath{$\boldsymbol{\Theta}$} & \boldmath{$\approx$} & \textbf{Author} & \textbf{Year} & \textbf{New "almost all" Brun--Titchmarsh result} \\
\hline $1 - \frac{1}{2}e^{-\frac{1}{4}}$ & $0.6105$ & Motohashi \cite{Motohashi1970} & 1970 & $\pi\left(x; q, a\right) \leqslant \frac{\left(\frac{2}{1 - \theta} +\varepsilon \right) x}{\varphi(q) \log x},\ q < x,\ q \sim x^{\theta} \ (\text{for all } q)$ \\
\hline $2 - \frac{3}{2}e^{-\frac{1}{12}}$ & $0.6199$ & Hooley \cite{HooleyBT1} & 1972 & \makecell{ [\cite{HooleyBT1}, Theorem 1] \\ $\pi\left(x; q, a\right) \leqslant \frac{\left(\frac{6}{2 - \theta} +\varepsilon \right) x}{\varphi(q) \log x},\ x^{\frac{1}{2}} < q < x^{\frac{4}{5} },\ q \sim x^{\theta}$ } \\
\hline $\frac{5}{8}$ & $0.625$ & Hooley \cite{Hooleypa} & 1973 & \makecell{ [\cite{DI1984}, (2)] \\ $\pi\left(x; q, a\right) \leqslant \frac{(4+\varepsilon) x}{\varphi(q) \log x},\ x^{\frac{1}{2}} < q < x^{\frac{3}{4} }$ } \\
\hline $\frac{5}{3} - \frac{17^{\frac{5}{7}}}{2^{\frac{4}{7}} 3^{\frac{5}{7}} 5^{\frac{3}{7}}}e^{-\frac{1}{8}}$ & $0.6381$ & Iwaniec \cite{IwaniecBT} & 1982 & \makecell{ [\cite{IwaniecBT}, Theorems 6 and 10] \\ $\pi\left(x; q, a\right) \leqslant \begin{cases}
\frac{\left(\frac{8}{6-7 \theta} +\varepsilon \right) x}{\varphi(q) \log x}, & x^{\frac{1}{2}} < q < x^{\frac{8}{15}},\ q \sim x^{\theta} \ (\text{for all } q) \\
\frac{\left(\frac{12}{5-3 \theta} +\varepsilon \right) x}{\varphi(q) \log x}, & x^{\frac{8}{15}} < q < x^{\frac{2}{3}},\ q \sim x^{\theta}
\end{cases}$ } \\
\hline $1 - \frac{1}{2}e^{-\frac{3}{8}}$ & $0.6563$ & Deshouillers and Iwaniec \cite{DI1984} & 1984 & \makecell{ [\cite{DI1984}, Theorem] \\ $\pi\left(x; q, a\right) \leqslant \frac{\left(\frac{4}{3(1-\theta)}+\varepsilon \right) x}{\varphi(q) \log x},\ x^{\frac{1}{2}} < q < x,\ q \sim x^{\theta}$ } \\
\hline \makecell{ $\frac{1}{20} \exp \left(\frac{-3}{280} \times \right.$ \\ $\left(35-21 \log \left(\frac{13}{12}\right) \right.$ \\ $-60 \log \left(\frac{400}{351}\right)$ \\ $\left. \left. +35 \log \left(\frac{361}{360}\right)\right)\right) \times$ \\ $\left(20 \exp \left(\frac{3}{280} \times \right. \right.$ \\ $\left(35-21 \log \left(\frac{13}{12}\right) \right. $ \\ $-60 \log \left(\frac{400}{351}\right)$ \\ $\left. \left. \left. +35 \log \left(\frac{361}{360}\right)\right)\right)-9\right)$} & $0.6578$ & Fouvry \cite{Fouvry1984} & 1984 & \makecell{ [\cite{Fouvry1984}, Théorème] \\ $\pi\left(x; q, a\right) \leqslant \begin{cases}
\frac{\left(\frac{12}{25-40 \theta} - \log \left(\frac{10(1-\theta)}{9 \theta} \right)+\varepsilon \right) x}{\varphi(q) \log x}, & x^{\frac{1}{2}} < q < x^{\frac{53}{104}},\ q \sim x^{\theta} \\
\frac{\left(\frac{48}{47-56 \theta} - \log \left(\frac{10(1-\theta)}{9 \theta} \right)+\varepsilon \right) x}{\varphi(q) \log x}, & x^{\frac{53}{104}} < q < x^{\frac{10}{19}},\ q \sim x^{\theta} \\
\frac{\left(\frac{48}{47-56 \theta} +\varepsilon \right) x}{\varphi(q) \log x}, & x^{\frac{10}{19}} < q < x^{\frac{11}{20}},\ q \sim x^{\theta}
\end{cases}$ } \\
\hline $0.6683$ & $0.6683$ & Fouvry \cite{Fouvry} & 1985 & \makecell{ [\cite{Fouvry}, Théorème 3] \\ $\pi\left(x; q, a\right) \leqslant \begin{cases}
\frac{(1 + \text{Sieve Integrals} + \varepsilon) x}{\varphi(q) \log x}, & x^{\frac{1}{2}} < q < x^{\frac{17}{32}},\ q \sim x^{\theta} \\
\frac{\left(\frac{22}{15-13 \theta} - \log \left(\frac{4(1-\theta)}{3 \theta} \right)+\varepsilon \right) x}{\varphi(q) \log x}, & x^{\frac{17}{32}} < q < x^{\frac{8}{15}},\ q \sim x^{\theta} \\
\frac{\left(\frac{10}{9-10 \theta} - \log \left(\frac{4(1-\theta)}{3 \theta} \right)+\varepsilon \right) x}{\varphi(q) \log x}, & x^{\frac{8}{15}} < q < x^{\frac{4}{7}},\ q \sim x^{\theta} \\
\frac{\left(\frac{10}{9-10 \theta} +\varepsilon \right) x}{\varphi(q) \log x}, & x^{\frac{4}{7}} < q < x^{\frac{3}{5}},\ q \sim x^{\theta} \\
\frac{\left(\frac{8}{3- \theta} +\varepsilon \right) x}{\varphi(q) \log x}, & x^{\frac{3}{5}} < q < x^{\frac{5}{7}},\ q \sim x^{\theta}
\end{cases}$ } \\
\hline
\end{tabular}
\end{center}

In 1996, Baker and Harman \cite{676} presented a mixed approach to this problem. They combined the traditional average Brun--Titchmarsh approach (with a refined version of [\cite{Fouvry}, Théorème 3] which will be used in Section 6.10) with the alternative sieve developed by Harman \cite{Harman1}, using the estimates of Bombieri--Friedlander--Iwaniec type (see \cite{BFI} and \cite{BFI2}, for example) as arithmetic information inputs. Their method is more powerful when $\theta$ is near to $\frac{1}{2}$. In this way they got $\boldsymbol{\Theta} = 0.676$. Two years later, they \cite{677} further developed a variant of the three-dimensional alternative sieve used in \cite{676} and obtained a saving over one-dimensional sums, leading to a better result $\boldsymbol{\Theta} = 0.677$. Although this result only improved the previous one by $0.001$, it was still a improvement since they used a continuous version of alternative sieve, and any saving on a single point became quite small after integrating. Because of the lack of new arithmetic information, even $0.678$ seems impossible without any new things.

Recently, Maynard \cite{MaynardLargeModuliI} got a new breakthrough on the distribution of primes in arithmetic progressions to large moduli $q = q_1 q_2 \sim x^{\frac{11}{21} -\varepsilon}$. The main theorem in his paper is not applicable here, but we find that one of his lemmas (Proposition 8.3) can be used here as a new arithmetic information input. By inserting this into the sieving process in \cite{676} and \cite{677}, together with a careful numerical calculation for sieve integrals, we get the following sharper result.
\begin{theorem1number}\label{t1}
There are infinitely many primes $p$ such that
$$
P^{+}(p + a) > p^{0.679 - \varepsilon}.
$$
\end{theorem1number}
In order to prove our Theorem~\ref{t1}, we need to give good upper bounds for $C(\theta)$ in (4). Clearly this yields an improvement over the main (numerical) theorem for $C_2(\theta)$ in \cite{676}. All the numerical values of integrals in Section 6 are calculated using Mathematica 14.

Throughout this paper, we always suppose that $\varepsilon$ is a sufficiently small positive constant and $x$ is sufficiently large. Let $\theta \in [0.5, 0.679]$ and $Q = x^{\theta}$. The letter $p$, with or without subscript, is reserved for prime numbers.

\section{An outline of the proof}
Let
$$
\mathcal{A}^q =\{n : n \sim x,\ n \equiv a (\bmod q) \}, \quad \mathcal{B}^q =\{n : n \sim x,\ (n, q) = 1 \},
$$
$$
\mathcal{C}_d=\{n: n d \in \mathcal{C}\}, \quad P(z)=\prod_{p<z} p, \quad S(\mathcal{C}, z)=\sum_{\substack{n \in \mathcal{C} \\ (n, P(z))=1}} 1.
$$

To prove our Theorem~\ref{t1}, it suffices to show that
\begin{equation}
\sum_{x^{0.679} \leqslant p \leqslant x + a} \pi(x; p, -a) \log p > \varepsilon x.
\end{equation}
Note that we have
\begin{align}
\nonumber \sum_{p \leqslant x} \log (p + a) =&\ \sum_{p \leqslant x} \sum_{n | (p + a)} \Lambda(n) \\
\nonumber =&\ \sum_{p_1 \leqslant x} \sum_{p_2 | (p_1 + a)} \log p_2 + o(x) \\
=&\ \sum_{p \leqslant x + a} \pi(x; p, -a) \log p + o(x)
\end{align}
and
\begin{equation}
\sum_{p \leqslant x} \log (p + a) \sim x.
\end{equation}

By the Prime Number Theorem, we have
\begin{equation}
\sum_{p \leqslant x + a} \pi(x; p, -a) \log p > (1 - \varepsilon) x.
\end{equation}
From the Bombieri--Vinogradov theorem and the Brun--Titchmarsh inequality, we have
\begin{equation}
\sum_{p \leqslant x^{\frac{1}{2}} } \pi(x; p, -a) \log p < \left(\frac{1}{2} + \varepsilon \right) x.
\end{equation}
Thus, in order to show (6), we only need to establish that
\begin{equation}
\sum_{x^{\frac{1}{2}} < p \leqslant x^{0.679} } \pi(x; p, -a) \log p < \left(\frac{1}{2} - 3 \varepsilon \right) x.
\end{equation}
We remark that one can also see \cite{LRB7435} and previous articles on that topic to get the idea of using a continuous version of Harman's alternative sieve to give an upper bound for a special sum over primes in a fixed range. By the definition of the sieve function and Prime Number Theorem, we have
\begin{equation}
\sum_{p \in \mathcal{A}^{q}} 1 = S\left(\mathcal{A}^{q}, (2x)^{\frac{1}{2}} \right) \quad \text{and} \quad S\left(\mathcal{B}^{q}, (2x)^{\frac{1}{2}} \right) = (1+o(1)) \frac{x}{\log x}.
\end{equation}
Thus, we need an estimate for $S\left(\mathcal{A}^{q}, (2x)^{\frac{1}{2}} \right)$ that holds for almost all $q \sim Q$. Our aim is to show that the sparser set $\mathcal{A}^{q}$ contains the expected proportion of primes compared to the larger set $\mathcal{B}^{q}$, which requires us to decompose $S\left(\mathcal{A}^{q}, (2x)^{\frac{1}{2}} \right)$ and prove asymptotic formulas of the form
\begin{equation}
S\left(\mathcal{A}^{q}, z \right) = (1+o(1)) \frac{1}{\varphi(q)} S\left(\mathcal{B}^{q}, z \right)
\end{equation}
for some parts of it, and drop the other parts. The dropped parts must be negative, since we want an upper bound. If we have, for almost all $q \sim Q$,
\begin{equation}
S\left(\mathcal{A}^{q}, (2x)^{\frac{1}{2}} \right) < C(\theta) \frac{x}{\varphi(q) \log x},
\end{equation}
then trivially
\begin{equation}
\pi(x; q, a) < C(\theta) \frac{x}{\varphi(q) \log x}.
\end{equation}
Note that $C(\theta)$ is monotonic increasing ([\cite{676}, Theorem 1(i)]). Combining (15) with the bound
\begin{equation}
\int_{0.5}^{0.679} C(\theta) d \theta < \frac{1}{2},
\end{equation}
we can easily deduce (11) and prove Theorem~\ref{t1}. Now we only need to prove (16) by giving upper bounds for the function $C(\theta)$. In Section 6 we shall consider it with $\theta$ lies in different intervals. Note that each $\theta$ corresponds to an independent sieve decomposition.

In the process of giving asymptotic formulas like (13) in Harman's sieve, one needs the following types of arithmetic information (with all variables are dyadic constrained, and all coefficients are divisor-bounded):
\begin{equation}
\sum_{q \sim Q} \gamma_q \sum_{m} a_m \sum_{m n_1 \cdots n_j \in \mathcal{A}^q } 1 = (1+o(1)) \frac{1}{\varphi(q)} \sum_{q \sim Q} \gamma_q \sum_{m} a_m \sum_{m n_1 \cdots n_j \in \mathcal{B}^q } 1 , \tag{Type-I$_j$}
\end{equation}
\begin{equation}
\sum_{q \sim Q} \gamma_q \sum_{m_1 \cdots m_j \in \mathcal{A}^q } a_{1, m_1} a_{2, m_2} \cdots a_{j, m_j} = (1+o(1)) \frac{1}{\varphi(q)} \sum_{q \sim Q} \gamma_q \sum_{m_1 \cdots m_j \in \mathcal{B}^q } a_{1, m_1} a_{2, m_2} \cdots a_{j, m_j} , \tag{Type-II$_j$}
\end{equation}
\begin{equation}
\sum_{q \sim Q} \gamma_q \sum_{m, n} a_m b_n \sum_{m n l \in \mathcal{A}^q } 1 = (1+o(1)) \frac{1}{\varphi(q)} \sum_{q \sim Q} \gamma_q \sum_{m, n} a_m b_n \sum_{m n l \in \mathcal{B}^q } 1 . \tag{Type-I/II}
\end{equation}
We can combine Type-I$_1$ and Type-II information together to prove asymptotic formulas for sums of $S\left(\mathcal{A}_{p_1 \cdots p_n}, x^{\kappa_0}\right)$, and sometimes we can use only Type-II information to give asymptotic formulas for sums of $S\left(\mathcal{A}_{p_1 \cdots p_n}, p_n\right)$. For the two-dimensional or even three-dimensional alternative sieve, Type-I$_2$ and Type-I$_3$ information is required.

\section{Type-II arithmetic information}
In this section we give several results of the form
$$
\sum_{q \sim Q} \gamma_q \sum_{\substack{m_1 \cdots m_j \in \mathcal{A}^q \\ m_i \sim M_i \\ 1 \leqslant i \leqslant j}} a_{1, m_1} a_{2, m_2} \cdots a_{j, m_j} = (1+o(1)) \frac{1}{\varphi(q)} \sum_{q \sim Q} \gamma_q \sum_{\substack{m_1 \cdots m_j \in \mathcal{B}^q \\ m_i \sim M_i \\ 1 \leqslant i \leqslant j}} a_{1, m_1} a_{2, m_2} \cdots a_{j, m_j}
$$
(The errors, here and in lemmas below, can be bounded by $x (\log x)^{-A}$ for some large $A > 1$.) The three conditions we want the coefficients to satisfy are as follows:

(\textbf{Condition 3.1}: Siegel--Walfisz condition) For any $f \geqslant 1$, $k \geqslant 1$, $b \neq 0$ and $(k, b) = 1$, we have
$$
\sum_{\substack{l \sim L \\ l \equiv b (\mod k) \\ (l, f) = 1 }} \lambda_l = \frac{1}{\varphi(k)} \sum_{\substack{l \sim L \\ (l, f k) = 1 }} \lambda_l + O \left(\frac{\left(\sum_{l \sim L} |\lambda_l|^2 \right)^{\frac{1}{2}} L^{\frac{1}{2}} (d(f))^B }{(\log L)^{A}} \right).
$$

(\textbf{Condition 3.2}: No small prime factors) We have $\lambda_l = 0$ whenever $l$ has a prime factor less than $\exp\left(\log x (\log \log x)^{-2} \right)$.

(\textbf{Condition 3.3}: Values often larger than $1$) We have
$$
L^{1 - \varepsilon} \sum_{l \sim L} |\lambda_l|^4 \ll \left(\sum_{l \sim L} |\lambda_l|^2 \right)^{2}.
$$

\begin{lemma}\label{l31} ([\cite{676}, Lemma 3], [\cite{HarmanBOOK}, Lemma 8.1]).
Let $M_1 M_2 = x$ and $\min(M_1, M_2) > x^{\varepsilon^2}$. Suppose that $a_{2, m_2}$ satisfies \textbf{Conditions 3.1--3.3}. If we have
$$
x^{\varepsilon^2 - 1} Q^2 < M_2 < x^{\frac{5}{6} - \varepsilon^2} Q^{-\frac{4}{3}},
$$
then
$$
\sum_{q \sim Q} \gamma_q \sum_{\substack{m_1 m_2 \in \mathcal{A}^q \\ m_i \sim M_i \\ 1 \leqslant i \leqslant 2}} a_{1, m_1} a_{2, m_2} = (1+o(1)) \frac{1}{\varphi(q)} \sum_{q \sim Q} \gamma_q \sum_{\substack{m_1 m_2 \in \mathcal{B}^q \\ m_i \sim M_i \\ 1 \leqslant i \leqslant 2}} a_{1, m_1} a_{2, m_2}.
$$
\end{lemma}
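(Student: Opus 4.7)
The plan is to follow the dispersion method of Linnik, as refined by Bombieri--Friedlander--Iwaniec and systematized in Harman's book, exploiting the asymmetry that in this range $a_{1,m_1}$ is an arbitrary divisor-bounded coefficient while $a_{2,m_2}$ carries the arithmetic structure provided by \textbf{Conditions 3.1--3.3}. Setting
\begin{equation*}
E(q;m_1)=\sum_{\substack{m_2\sim M_2\\ m_1m_2\in\mathcal{A}^q}}a_{2,m_2}-\frac{1}{\varphi(q)}\sum_{\substack{m_2\sim M_2\\ m_1m_2\in\mathcal{B}^q}}a_{2,m_2},
\end{equation*}
I would first apply Cauchy--Schwarz in the outer variables $(q,m_1)$ to strip off the unknown weights $\gamma_q a_{1,m_1}$, reducing the task to proving a mean-square estimate of the shape $\sum_{q\sim Q}\sum_{m_1\sim M_1}|E(q;m_1)|^2\ll M_2\|a_2\|_2^2\,x(Q\log^{2A}x)^{-1}$, which by the divisor bound on $\gamma_q$ and $a_{1,m_1}$ is sufficient.

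Next I would open the square to obtain the standard dispersion expression
\begin{equation*}
\sum_{m_2,m_2'\sim M_2}a_{2,m_2}\overline{a_{2,m_2'}}\sum_{q\sim Q}\bigl(D_1-D_2-\overline{D_2}+D_3\bigr),
\end{equation*}
where $D_1(q,m_2,m_2')$ counts $m_1\sim M_1$ with $m_1m_2\equiv m_1m_2'\equiv a\pmod{q}$, and $D_2,D_3$ are the corresponding hybrid and pure main terms. The diagonal $m_2=m_2'$ is treated by the Siegel--Walfisz hypothesis \textbf{(Condition 3.1)}, which produces the expected main term, together with the $L^4$-type bound \textbf{(Condition 3.3)}, which absorbs the loss from Cauchy--Schwarz; \textbf{Condition 3.2} lets me freely restrict to $(m_2m_2',q)=1$ throughout. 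For the off-diagonal $m_2\neq m_2'$, I would complete the sum over $m_1$ modulo $q$ via Poisson summation, which rewrites the inner count as a smoothly weighted sum of incomplete Kloosterman sums of the form $S\bigl(a(m_2-m_2'),h;q\bigr)$ of dual length $\asymp Q/M_1$.

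The crux, and the main obstacle, is to bound this sum of Kloosterman sums using the Deshouillers--Iwaniec estimate, and this is exactly where the range conditions on $M_2$ enter. The lower bound $M_2>x^{\varepsilon^2-1}Q^2$ is the ``diagonal dominates'' condition ensuring that Poisson completion produces a non-trivial dual length so that the diagonal main term is not overwhelmed by completion losses; equivalently $M_1<x^{1-\varepsilon^2}Q^{-2}$, which permits a non-trivial expansion. The upper bound $M_2<x^{5/6-\varepsilon^2}Q^{-4/3}$ is precisely the regime in which the Deshouillers--Iwaniec spectral estimate for sums of Kloosterman sums, applied with parameters $(C,D,N,R,S)$ matched to $(Q,M_1,Q/M_1,M_2,M_2)$, yields a power saving over the trivial Weil bound. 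This is the only place where deep spectral input is essential; everywhere else standard manipulations and the three conditions on $a_{2,m_2}$ suffice. Collecting the diagonal main term and the off-diagonal saving, and reinserting the outer Cauchy--Schwarz factor, delivers the claimed asymptotic.
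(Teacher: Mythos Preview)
The paper does not give a proof at all: it simply records that the lemma ``follows from [\cite{BFI}, Theorem 3]'' and cites the corresponding lemmas in \cite{676} and \cite{HarmanBOOK}. Your sketch is a correct outline of how that theorem of Bombieri--Friedlander--Iwaniec is actually proved, via dispersion, Cauchy--Schwarz in $(q,m_1)$, Poisson completion in $m_1$, and the Deshouillers--Iwaniec spectral bound for the resulting Kloosterman sums; the three conditions on $a_{2,m_2}$ are used exactly where you indicate. One small arithmetic slip: from $M_1M_2=x$ and $M_2>x^{\varepsilon^2-1}Q^2$ one gets $M_1<x^{2-\varepsilon^2}Q^{-2}$, not $x^{1-\varepsilon^2}Q^{-2}$; this does not affect the argument, since the point of that inequality is only that the Poisson dual length $Q/M_1$ is bounded below so that completion is meaningful.
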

\begin{proof}
This follows from [\cite{BFI}, Theorem 3].
\end{proof}

\begin{lemma}\label{l32} ([\cite{676}, Lemma 4], [\cite{HarmanBOOK}, Lemma 8.2]).
Let $M_1 M_2 M_3 = x$, $\min(M_1, M_2, M_3) > x^{\varepsilon^2}$ and $A > 0$. Suppose that $a_{j, m_j}$ ($j = 1, 2, 3$) satisfy \textbf{Condition 3.2} and $a_{2, m_2}$ also satisfies \textbf{Condition 3.1}. If there exists $B = B(A)$ such that the following conditions
$$
Q (\log x)^{B} < M_1 M_2,
$$
$$
M_1^2 M_2^3 < Q x^{1 - \varepsilon^2},
$$
$$
M_1^5 M_2^2 < x^{2 - \varepsilon^2}
$$
and
$$
M_1^4 M_2^3 < x^{2 - \varepsilon^2}
$$
hold, then
$$
\sum_{q \sim Q} \gamma_q \sum_{\substack{m_1 m_2 m_3 \in \mathcal{A}^q \\ m_i \sim M_i \\ 1 \leqslant i \leqslant 3}} a_{1, m_1} a_{2, m_2} a_{3, m_3} = (1+o(1)) \frac{1}{\varphi(q)} \sum_{q \sim Q} \gamma_q \sum_{\substack{m_1 m_2 m_3 \in \mathcal{B}^q \\ m_i \sim M_i \\ 1 \leqslant i \leqslant 3}} a_{1, m_1} a_{2, m_2} a_{3, m_3}.
$$
\end{lemma}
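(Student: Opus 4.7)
The plan is to reduce the average stated in the conclusion to a triple-convolution estimate of Bombieri--Friedlander--Iwaniec type and verify that the four numerical hypotheses on $M_1, M_2, M_3, Q$ precisely encode its range of validity.

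First I would rewrite the difference between the left- and right-hand sides as a single error sum
$$
\Delta(q) := \sum_{\substack{m_1 m_2 m_3 \in \cA^q \\ m_i \sim M_i}} a_{1,m_1} a_{2,m_2} a_{3,m_3} - \frac{1}{\varphi(q)} \sum_{\substack{m_1 m_2 m_3 \in \cB^q \\ m_i \sim M_i}} a_{1,m_1} a_{2,m_2} a_{3,m_3}.
$$
It suffices to prove that $\sum_{q \sim Q} \gamma_q \Delta(q) \ll x (\log x)^{-A}$ for arbitrary $A > 0$. The natural opening move is Cauchy--Schwarz in $q$, which trades the divisor-bounded weights $\gamma_q$ for a factor of $Q^{1/2}$ and reduces matters to the $L^2$-average $\sum_{q \sim Q} |\Delta(q)|^2$. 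Expanding the square introduces a second independent triple $(m_1', m_2', m_3')$, and the Siegel--Walfisz input from \textbf{Condition 3.1} on $a_{2,m_2}$ is used to extract and cancel the diagonal main term.

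The residual off-diagonal sum is then a genuine triple convolution in $(m_1, m_2, m_3)$ twisted by a factor depending on two moduli. This is precisely the situation addressed by the triple-convolution theorems of Bombieri--Friedlander--Iwaniec ([\cite{BFI}, Theorem 3] and the refinements of [\cite{BFI2}]). After separating variables via Mellin techniques and opening the congruence condition into additive characters, one arrives at bilinear sums over $(m_1, m_2)$ against incomplete Kloosterman sums in $m_3$ and $q$, which are handled by the Deshouillers--Iwaniec mean-value bounds together with Weil's individual bound. The hypothesis $Q(\log x)^B < M_1 M_2$ guarantees that the Siegel--Walfisz saving dominates the P\'olya--Vinogradov truncation error, while the three polynomial inequalities $M_1^2 M_2^3 < Q x^{1-\varepsilon^2}$, $M_1^5 M_2^2 < x^{2-\varepsilon^2}$ and $M_1^4 M_2^3 < x^{2-\varepsilon^2}$ arise from balancing the three distinct ways of grouping $(m_1, m_2, m_3)$ into bilinear blocks before invoking the Kloosterman-sum bounds (the first block grouping $m_3$ as the ``smooth'' variable, the other two permuting the roles of $m_1$ and $m_2$).

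I expect the main obstacle to be bookkeeping rather than genuinely new ideas: one must carefully use \textbf{Condition 3.2} (no small prime factors) to discard lower-order contributions where $(m_1 m_2 m_3, q) > 1$, and \textbf{Condition 3.3} to convert the $L^4$ norms that appear after Cauchy--Schwarz into controllable $L^2$ norms via H\"older's inequality. Once these verifications are carried through, the result reduces to a direct invocation of the triple-convolution estimate of [\cite{BFI}, Theorem 3] (equivalently [\cite{HarmanBOOK}, Lemma 8.2]) in the parameter range carved out by the four numerical conditions.
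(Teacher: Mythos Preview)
Your overall strategy---reduce to a Bombieri--Friedlander--Iwaniec triple-convolution estimate---is the right one, and indeed the paper's proof is a single sentence: ``This follows from [\cite{BFI2}, Theorem~3].'' No re-derivation is carried out; the four inequalities in the hypotheses are simply the conditions under which that theorem applies.

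However, there are two concrete slips. First, your final citation is to [\cite{BFI}, Theorem~3], but that is the \emph{bilinear} result (two factors, $M_1 M_2 = x$) used for Lemma~\ref{l31}, not this lemma. The correct source for the trilinear estimate with these four polynomial conditions is [\cite{BFI2}, Theorem~3], from the \emph{second} BFI paper. These are genuinely different theorems with different admissible ranges, so the distinction matters. Second, you invoke \textbf{Condition~3.3} to control $L^4$ norms, but Lemma~\ref{l32} does not assume it: only Conditions~3.1 and~3.2 are hypotheses here, and the cited BFI2 theorem does not require the $L^4$-type input.

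The intermediate sketch you give (Cauchy--Schwarz in $q$, expanding the square, Kloosterman sums via Deshouillers--Iwaniec) is roughly the \emph{internal} architecture of the BFI machinery, but since the paper treats the BFI2 theorem as a black box, that sketch is superfluous for the lemma as stated. If you want to present it, make clear you are outlining the proof of [\cite{BFI2}, Theorem~3] itself rather than an additional layer of argument.
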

\begin{proof}
This follows from [\cite{BFI2}, Theorem 3].
\end{proof}

\begin{lemma}\label{l33} ([\cite{676}, Lemma 5], [\cite{HarmanBOOK}, Lemma 8.3]).
Let $M_1 M_2 M_3 = x$, $\min(M_1, M_2, M_3) > x^{\varepsilon^2}$ and $A > 0$. Suppose that $a_{j, m_j}$ ($j = 1, 2, 3$) satisfy \textbf{Condition 3.2} and $a_{2, m_2}$ also satisfies \textbf{Condition 3.1}. If there exists $B = B(A)$ such that the following conditions
$$
Q (\log x)^{B} < M_1 M_2,
$$
$$
M_1 M_2^2 Q^2 < x^{2 - 3 \varepsilon^2}
$$
and
$$
M_1^5 M_2^2 < x^{2 - 3 \varepsilon^2}
$$
hold, then
$$
\sum_{q \sim Q} \gamma_q \sum_{\substack{m_1 m_2 m_3 \in \mathcal{A}^q \\ m_i \sim M_i \\ 1 \leqslant i \leqslant 3}} a_{1, m_1} a_{2, m_2} a_{3, m_3} = (1+o(1)) \frac{1}{\varphi(q)} \sum_{q \sim Q} \gamma_q \sum_{\substack{m_1 m_2 m_3 \in \mathcal{B}^q \\ m_i \sim M_i \\ 1 \leqslant i \leqslant 3}} a_{1, m_1} a_{2, m_2} a_{3, m_3}.
$$
\end{lemma}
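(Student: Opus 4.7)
The plan is to deduce this from the Bombieri--Friedlander--Iwaniec dispersion-method estimates for triple convolutions, precisely as Lemmas \ref{l31} and \ref{l32} were obtained from [\cite{BFI}, Theorem 3] and [\cite{BFI2}, Theorem 3] respectively. Lemmas \ref{l32} and \ref{l33} are two different size-range variants of the same triple-convolution Type-II result, each corresponding to a different choice of which pair of variables to ``double'' via Cauchy--Schwarz in the dispersion argument. The presence of $Q^2$ in the condition $M_1 M_2^2 Q^2 < x^{2 - 3 \varepsilon^2}$ (which is absent from Lemma \ref{l32}) strongly suggests that the relevant version applies Cauchy--Schwarz to the pair $(m_3, q)$, so that both $q$ and $q'$ appear in the resulting incomplete Kloosterman sums to modulus $[q, q']$.

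Concretely, I would first use Condition 3.1 on $a_{2, m_2}$ to separate the expected main term, reducing the problem to bounding the discrepancy
\[
E = \sum_{q \sim Q} \gamma_q \sum_{\substack{m_1 \sim M_1 \\ m_3 \sim M_3}} a_{1, m_1} a_{3, m_3} \left( \sum_{\substack{m_2 \sim M_2 \\ m_1 m_2 m_3 \equiv a (\bmod q)}} a_{2, m_2} \; - \; \frac{1}{\varphi(q)} \sum_{\substack{m_2 \sim M_2 \\ (m_2, q) = 1}} a_{2, m_2} \right),
\]
where the condition $Q (\log x)^B < M_1 M_2$ guarantees that the Siegel--Walfisz main term dominates its own error. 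I would then apply Cauchy--Schwarz in the $(q, m_3)$-variables, expand the square to produce an average over pairs $(q, q')$, and invoke Weil's bound together with the Deshouillers--Iwaniec large-sieve inequality for sums of Kloosterman sums (which underpins [\cite{BFI2}, Theorem 3]). Controlling the diagonal contributions trivially would produce the remaining two size constraints $M_1 M_2^2 Q^2 < x^{2 - 3 \varepsilon^2}$ (from the off-diagonal Kloosterman contribution) and $M_1^5 M_2^2 < x^{2 - 3 \varepsilon^2}$ (from a further Cauchy--Schwarz in $m_1$).

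The main obstacle is not conceptual but purely bookkeeping: identifying which size-range version of [\cite{BFI2}, Theorem 3] produces exactly these three inequalities under the assumption (Condition 3.2) that each $a_{j, m_j}$ vanishes on integers with small prime factors, so that the GCD conditions in the dispersion machinery are automatically satisfied. Since the analogous statement is carried out in detail in [\cite{HarmanBOOK}, Lemma 8.3] and [\cite{676}, Lemma 5], I expect the proof to reduce to a one-line citation of the relevant theorem in \cite{BFI2}, mirroring the structure of the proofs of Lemmas \ref{l31} and \ref{l32}.
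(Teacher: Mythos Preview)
Your proposal is essentially correct and matches the paper's approach: the proof is indeed a one-line citation to \cite{BFI2}, exactly parallel to Lemmas~\ref{l31} and~\ref{l32}. The only correction is that the paper cites [\cite{BFI2}, Theorem~4] rather than a variant of Theorem~3; this is consistent with your heuristic that the $Q^2$ in the second condition signals a different Cauchy--Schwarz grouping from that of Lemma~\ref{l32}, and indeed Theorems~3 and~4 of \cite{BFI2} differ precisely in which pair of variables is doubled in the dispersion step.
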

\begin{proof}
This is a slight variant of [\cite{BFI2}, Theorem 4].
\end{proof}

\begin{lemma}\label{l34}
Let $M_1 M_2 M_3 = x$, $\min(M_1, M_2, M_3) > x^{\varepsilon^2}$ and $Q < x^{0.7 - \varepsilon^2}$. Suppose that $a_{j, m_j}$ ($j = 1, 2, 3$) satisfy \textbf{Condition 3.2} and $a_{2, m_2}$ also satisfies \textbf{Condition 3.1}. If the following conditions
$$
Q x^{\varepsilon^2} < M_1 M_2,
$$
$$
M_2 < Q^{-1} x^{1 - 2 \varepsilon^2},
$$
$$
M_1 M_2 < Q^{-\frac{1}{7}} x^{\frac{153}{224} - 10 \varepsilon^2}
$$
and
$$
M_1^4 M_2 < Q^{-1} x^{\frac{57}{32} - 10 \varepsilon^2}
$$
hold, then
$$
\sum_{q \sim Q} \gamma_q \sum_{\substack{m_1 m_2 m_3 \in \mathcal{A}^q \\ m_i \sim M_i \\ 1 \leqslant i \leqslant 3}} a_{1, m_1} a_{2, m_2} a_{3, m_3} = (1+o(1)) \frac{1}{\varphi(q)} \sum_{q \sim Q} \gamma_q \sum_{\substack{m_1 m_2 m_3 \in \mathcal{B}^q \\ m_i \sim M_i \\ 1 \leqslant i \leqslant 3}} a_{1, m_1} a_{2, m_2} a_{3, m_3}.
$$
\end{lemma}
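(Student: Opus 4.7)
The plan is to deduce this lemma from Proposition~8.3 of \cite{MaynardLargeModuliI}, mirroring the way Lemmas~\ref{l31}--\ref{l33} are obtained from the triple-convolution theorems of \cite{BFI} and \cite{BFI2}. As noted in the introduction, this is precisely the new arithmetical information that makes the sharpening to $\boldsymbol{\Theta}=0.679$ possible, so the role of Lemma~\ref{l34} in the paper is to package Maynard's estimate in the exact shape needed by Harman's sieve.

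First, I would rewrite the quantity of interest as a weighted discrepancy sum
$$
\sum_{q \sim Q} \gamma_q \, \Delta(a_1 * a_2 * a_3; q),
$$
where $\Delta(\cdot; q)$ denotes the difference between the count restricted to $\cA^q$ and its $\cB^q$-normalised expectation. Since $\gamma_q$ is divisor-bounded, it suffices to prove a uniform bound on $\sum_{q \sim Q} |\Delta(a_1 * a_2 * a_3; q)|$ of size $\ll x (\log x)^{-A}$ for any $A>0$. The coefficient $a_{2,m_2}$ satisfies Condition~3.1, which is precisely the Siegel--Walfisz hypothesis Maynard's Proposition~8.3 demands on one of its three input sequences, while Condition~3.2 on all three coefficients sweeps away the small-prime contributions that arise in the Fourier-analytic step of Maynard's argument.

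Second, I would feed $(a_{1,\cdot}, a_{2,\cdot}, a_{3,\cdot})$ and the length triple $(M_1, M_2, M_3)$, with $M_1 M_2 M_3 = x$, into Proposition~8.3, and check that each of its hypotheses is implied by one of the four stated inequalities. The lower bound $Q x^{\varepsilon^2} < M_1 M_2$ is equivalent to $M_3 < Q^{-1} x^{1 - \varepsilon^2}$, placing the smooth variable in the Dirichlet--P\'olya range required for the initial application of Poisson summation. The upper bound $M_2 < Q^{-1} x^{1 - 2\varepsilon^2}$ controls the Cauchy--Schwarz loss on the Siegel--Walfisz factor. The two polynomial inequalities $M_1 M_2 < Q^{-1/7} x^{153/224 - 10 \varepsilon^2}$ and $M_1^4 M_2 < Q^{-1} x^{57/32 - 10 \varepsilon^2}$ encode the two regimes arising from Maynard's Weil/Deligne-type exponential sum estimates, namely the balanced regime and the dispersion-dominant regime; the two distinct exponents $153/224$ and $57/32$ correspond to the two different combinatorial splittings used in optimising Maynard's bound. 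Under these four inequalities, Proposition~8.3 yields the desired power saving for the discrepancy and hence the claimed asymptotic.

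The main obstacle will be the bookkeeping required to match our parameters $(Q, M_1, M_2)$ to Maynard's modulus-factorisation setup $q = q_1 q_2$: Proposition~8.3 is stated for moduli admitting a factorisation of a prescribed shape, so either one restricts to such $q$ (and shows that the exceptional moduli contribute only $O(Q(\log x)^{-A})$, absorbable into the $o(1)$), or one tracks through the proof of Proposition~8.3 to verify that it extends uniformly over all $q$ in a dyadic range $q \sim Q$, invoking the hypothesis $Q < x^{0.7-\varepsilon^2}$ to keep the factorisation flexible. Once this point is settled, reassembling Maynard's conclusion into the asymptotic equality between the $\cA^q$ and $\varphi(q)^{-1} \cB^q$ sums is routine and identical to the corresponding step in Lemmas~\ref{l31}--\ref{l33}.
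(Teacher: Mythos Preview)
Your proposal is correct and takes essentially the same approach as the paper: the paper's entire proof is the single line ``This follows from [\cite{MaynardLargeModuliI}, Proposition 8.3],'' so your plan to package Maynard's triple-convolution estimate via the discrepancy sum and verify the four size constraints is exactly what is intended. The paper does not spell out the parameter matching or address the factorisation issue you raise; it treats the deduction as immediate, so your more detailed bookkeeping would in fact be an expansion rather than a deviation.
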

\begin{proof}
This follows from [\cite{MaynardLargeModuliI}, Proposition 8.3].
\end{proof}

\section{Sieve asymtotic formulas}
In this section we give asymptotic formulas for sums of sieve functions $S\left(\mathcal{A}_{p_1 \cdots p_n}, p_n\right)$ and $S\left(\mathcal{A}_{p_1 \cdots p_n}, x^{\kappa_0}\right)$ with $\kappa_0 = \kappa$ or $\kappa^{\prime}$, where
\begin{align*}
\kappa = \kappa(\theta) =
\begin{cases}
\frac{5-8\theta}{6} - \varepsilon, & \theta \leqslant \frac{17}{32} - \varepsilon, \\
\frac{5-8\theta}{12} - 3 \varepsilon, & \frac{17}{32} - \varepsilon < \theta \leqslant \frac{7}{13} - \varepsilon, \\
\frac{3-5\theta}{7} - 2 \varepsilon, & \frac{7}{13} - \varepsilon < \theta \leqslant \frac{4}{7} - \varepsilon, \\
\end{cases}
\end{align*}
and
\begin{align*}
\kappa^{\prime} = \kappa^{\prime}(\theta) =
\begin{cases}
\frac{11-20\theta}{6} - 2 \varepsilon, & \frac{7}{13} - \varepsilon < \theta \leqslant \frac{11}{20} - \varepsilon, \\
\kappa, & \text{otherwise}. \\
\end{cases}
\end{align*}
We also write
\begin{align*}
\tau = \tau(\theta) =
\begin{cases}
\frac{3(1-\theta)}{5} - \varepsilon, & \theta \leqslant \frac{11}{21}, \\
\frac{2}{7} - \varepsilon, & \frac{11}{21} < \theta \leqslant \frac{6}{11} - \varepsilon, \\
\frac{5-6\theta}{7} - \varepsilon, & \frac{6}{11} - \varepsilon < \theta, \\
\end{cases}
\end{align*}
and
\begin{align*}
\tau^{\prime} = \tau^{\prime}(\theta) =
\begin{cases}
\frac{5-6\theta}{7}, & \frac{7}{13} - \varepsilon < \theta \leqslant \frac{11}{20} - \varepsilon, \\
\tau, & \text{otherwise}. \\
\end{cases}
\end{align*}

Now, we put $p_i = x^{\alpha_i}$ and write $\boldsymbol{\alpha}_{n}$ to denote $(\alpha_1, \ldots, \alpha_n)$. We shall use the terms \textit{partition} and \textit{exactly partition} many times in the rest of our paper. One can see [\cite{HarmanBOOK}, Page 162] for the definition of them. Next, we shall define some regions that represent parts of the conditions of the asymptotic formulas. Note that all regions below are defined on $\theta \in [\frac{1}{2}, \frac{4}{7}]$.
\begin{align}
\nonumber \boldsymbol{g}_{1} =&\ \left\{(s, t): 2 \theta - 1 < s < \frac{5-8\theta}{6} \right\}, \\
\nonumber \boldsymbol{g}_{2} =&\ \left\{(s, t): s + t > \theta ,\ 2 s + 3 t < 1 + \theta,\ 5 s + 2 t < 2,\ 4 s + 3 t < 2 \right\}, \\
\nonumber \boldsymbol{g}_{3} =&\ \left\{(s, t): s + t > \theta,\ s + 2 t < 2 - 2 \theta,\ 5 s + 2 t < 2 \right\}, \\
\nonumber \boldsymbol{g}_{4} =&\ \left\{(s, t): s + t > \theta,\ t < 1 - \theta,\ s + t < \frac{153}{224} - \frac{1}{7} \theta,\ 4 s + t < \frac{57}{32} - \theta \right\}, \\
\nonumber \boldsymbol{A}_{j} =&\ \left\{\boldsymbol{\alpha}_{j}: \varepsilon^2 \leqslant \alpha_j < \cdots < \alpha_1 < \tau,\ \alpha_1 + \cdots + \alpha_j \leqslant 1 \right\}, \\
\nonumber \boldsymbol{T}^{*} =&\ \left\{(s, t): 0 \leqslant s \leqslant \frac{8\theta-2}{7},\ 0 \leqslant t \leqslant \frac{5-6\theta}{7} \right\}, \\
\nonumber \boldsymbol{T}_{j}^{*} =&\ \left\{\boldsymbol{\alpha}_{j}: \boldsymbol{\alpha}_{j} \text{ partitions exactly into } \boldsymbol{T}^{*} \right\}, \\
\nonumber \boldsymbol{S} =&\ \left\{(s, t): s < 1 - \theta,\ s + 2t < 2 - 2 \theta,\ s + 4t < 2 - \theta \right\}, \\
\nonumber \boldsymbol{S}_{j} =&\ \left\{\boldsymbol{\alpha}_{j}: \boldsymbol{\alpha}_{j} \text{ partitions exactly into } \boldsymbol{S} \right\}, \\
\nonumber \boldsymbol{U}_{j}^{\prime} = \boldsymbol{U}_{j}^{\prime}(\theta) =&\ \left\{\boldsymbol{\alpha}_{j}: \boldsymbol{\alpha}_{j} \in \boldsymbol{A}_{j},\ (\alpha_1, \ldots, \alpha_n, 2\theta-1+\varepsilon) \in \boldsymbol{S}_{j+1} \right\},
\end{align}
\begin{align*}
\boldsymbol{U}_{j} = \boldsymbol{U}_{j}(\theta)  =
\begin{cases}
\boldsymbol{U}_{j}^{\prime}(\theta), & \theta < \frac{7}{13}, \\
\left\{\boldsymbol{\alpha}_{j}: \boldsymbol{\alpha}_{j} \in \boldsymbol{A}_{j},\ \boldsymbol{\alpha}_{j} \in \boldsymbol{T}_{j}^{*} \right\}, & \theta \geqslant \frac{7}{13}, \\
\end{cases}
\end{align*}
\begin{align}
\nonumber \boldsymbol{T}^{**} =&\ \left\{(s, t): \frac{3}{7} < s < 1 - \theta,\ 0 \leqslant t \leqslant \frac{1-s}{2} \right\}, \\
\nonumber \boldsymbol{U}_{j}^{*} =&\ \left\{\boldsymbol{\alpha}_{j}: \boldsymbol{\alpha}_{j} \text{ partitions exactly into } \boldsymbol{T}^{**} \right\}, \\
\nonumber \boldsymbol{G}_{j} =&\ \left\{\boldsymbol{\alpha}_{j}: \boldsymbol{\alpha}_{j} \in \boldsymbol{U}_{j}^{*}, \text{ or } \boldsymbol{\alpha}_{j} \text{ partitions exactly into } \boldsymbol{g}_{1} \cup \boldsymbol{g}_{2} \cup \boldsymbol{g}_{3} \cup \boldsymbol{g}_{4} \right\}, \\
\nonumber \boldsymbol{R} =&\ \left\{\boldsymbol{\alpha}_{2}: \alpha_2 \leqslant \alpha_1,\ \alpha_1 + 2 \alpha_2 \leqslant 1,\ \alpha_1 + 4 \alpha_2 \geqslant 3 - 3 \theta,\ 3 \alpha_2 \geqslant 2 \alpha_1, \right.\\
\nonumber & \left. \quad \max\left(\tau,\ \frac{31 \theta - 15}{3} \right) \leqslant \alpha_1 \leqslant \min\left(\frac{3}{7},\ 4 - 7 \theta \right) \right\}, \\
\nonumber \boldsymbol{R}_0 =&\ \left\{\boldsymbol{\alpha}_{2}: \alpha_1 + 2 \alpha_2 \leqslant 1,\ \alpha_1 + 4 \alpha_2 \geqslant 3 - 3 \theta,\ 3 \alpha_2 \geqslant 2 \alpha_1, \right.\\
\nonumber & \left. \quad \max\left(\frac{19 \theta - 7}{7},\ \frac{50 \theta - 19}{17} \right) \leqslant \alpha_1 \leqslant \frac{3}{7} \right\}, \\
\nonumber \boldsymbol{D}_1 =&\ \left\{\boldsymbol{\alpha}_{3}: \alpha_1 \geqslant \kappa,\ \alpha_2 \geqslant \kappa,\ \alpha_3 \geqslant \kappa,\ \boldsymbol{\alpha}_{3} \notin \boldsymbol{G}_3, \right.\\
\nonumber & \left. \quad (\alpha_1, \alpha_2 + \alpha_3) \in \boldsymbol{R} \text{ with } \alpha_2 \geqslant \alpha_3 \text{ or } (\alpha_1 + \alpha_2, \alpha_3) \in \boldsymbol{R} \text{ with } \alpha_1 \geqslant \alpha_2 \right\}, \\
\nonumber \boldsymbol{D}_2 =&\ \left\{\boldsymbol{\alpha}_{4}: \alpha_1 \geqslant \kappa,\ \alpha_2 \geqslant \kappa,\ \alpha_3 \geqslant \kappa,\ \alpha_4 \geqslant \kappa, \right.\\
\nonumber & \left. \quad \boldsymbol{\alpha}_{4} \notin \boldsymbol{G}_4,\ (\alpha_1 + \alpha_2, \alpha_3 + \alpha_4) \in \boldsymbol{R} \right\}, \\
\nonumber \boldsymbol{D}_3 =&\ \left\{\boldsymbol{\alpha}_{3}: \alpha_1 \geqslant \kappa,\ \alpha_2 \geqslant \kappa,\ \alpha_3 \geqslant \kappa,\ \boldsymbol{\alpha}_{3} \notin \boldsymbol{G}_3, \right.\\
\nonumber & \left. \quad (\alpha_1, \alpha_2 + \alpha_3) \in \boldsymbol{R}_0 \text{ with } \alpha_2 \geqslant \alpha_3 \text{ or } (\alpha_1 + \alpha_2, \alpha_3) \in \boldsymbol{R}_0 \text{ with } \alpha_1 \geqslant \alpha_2 \right\}, \\
\nonumber \boldsymbol{D}_4 =&\ \left\{\boldsymbol{\alpha}_{4}: \alpha_1 \geqslant \kappa,\ \alpha_2 \geqslant \kappa,\ \alpha_3 \geqslant \kappa,\ \alpha_4 \geqslant \kappa, \right.\\
\nonumber & \left. \quad \boldsymbol{\alpha}_{4} \notin \boldsymbol{G}_4,\ (\alpha_1 + \alpha_2, \alpha_3 + \alpha_4) \in \boldsymbol{R}_0 \right\}, \\
\nonumber U_2 =&\ \left\{\boldsymbol{\alpha}_{2}: \frac{1}{2} \leqslant \theta < \frac{17}{32},\ \kappa \leqslant \alpha_1 \leqslant \frac{3}{7},\ \kappa \leqslant \alpha_2 < \min\left(\alpha_1, \frac{1}{2}(1-\alpha_1) \right) \right\}, \\
\nonumber A =&\ \left\{\boldsymbol{\alpha}_{2}: \boldsymbol{\alpha}_{2} \in U_2,\ \boldsymbol{\alpha}_{2} \notin \boldsymbol{G}_{2},\ \alpha_1 + \alpha_2 < \theta \right\}, \\
\nonumber B =&\ \left\{\boldsymbol{\alpha}_{2}: \boldsymbol{\alpha}_{2} \in U_2,\ \boldsymbol{\alpha}_{2} \notin \boldsymbol{G}_{2} \cup A,\ \alpha_1 + 4 \alpha_2 < 3 - 3 \theta \right\}, \\
\nonumber C =&\ \left\{\boldsymbol{\alpha}_{2}: \boldsymbol{\alpha}_{2} \in U_2,\ \boldsymbol{\alpha}_{2} \notin \boldsymbol{G}_{2} \cup A \cup B \right\}.
\end{align}

Then we list some asymptotic formulas for sums whose variables lie in some suitable intervals. Note that from here we write $\mathcal{A}^{q}$ and $\mathcal{B}^{q}$ as $\mathcal{A}$ and $\mathcal{B}$ for simplicity, and the readers should remember the outer summation over $q$.

\begin{lemma}\label{l41} ([\cite{676}, Lemmas 7 and 18], [\cite{HarmanBOOK}, Lemmas 8.6 and 8.14]).
Let $\frac{1}{2} \leqslant \theta < \frac{4}{7}$ and suppose that $\min \boldsymbol{\alpha}_{j} \geq \varepsilon^2$. Then we have, for almost all $q \sim x^{\theta}$,
$$
\sum_{\boldsymbol{\alpha}_{j} \in \boldsymbol{G}_{j}} S\left(\mathcal{A}_{p_1 \cdots p_j}, p_j\right) = (1+o(1)) \frac{1}{\varphi(q)} \sum_{\boldsymbol{\alpha}_{j} \in \boldsymbol{G}_{j}} S\left(\mathcal{B}_{p_1 \cdots p_j}, p_j\right).
$$
\end{lemma}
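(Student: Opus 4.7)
The plan is to follow the framework of [\cite{676}, Lemmas 7 and 18] and [\cite{HarmanBOOK}, Lemmas 8.6 and 8.14], incorporating the new arithmetical input of Lemma~\ref{l34}. Writing $\mathcal{P} = p_1 \cdots p_j$, the idea is to express $S(\mathcal{A}_{\mathcal{P}}, p_j)$ as a bounded linear combination of bilinear and trilinear sums to which one of Lemmas~\ref{l31}--\ref{l34} applies, and to check that the analogous decomposition of $\varphi(q)^{-1} S(\mathcal{B}_{\mathcal{P}}, p_j)$ matches term by term after averaging over $q \sim Q$.

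First I would apply Buchstab's identity (iterated if needed) to reduce the sieve condition $(m, P(p_j)) = 1$ to sums of the form $\sum_{\mathcal{P}' m \in \mathcal{A}^q} 1$ with $m$ rough of a prescribed size, and then apply Heath--Brown's identity (say at level $3$) together with a dyadic dissection to rewrite each such sum, up to an acceptable error, as a combination of $O(\log^{O(1)} x)$ sums of the form
$$
\sum_{q \sim Q} \gamma_q \sum_{\substack{m_1 m_2 m_3 \in \mathcal{A}^q \\ m_i \sim M_i}} a_{1,m_1}\, a_{2,m_2}\, a_{3,m_3},
$$
with $a_{i,m_i}$ divisor-bounded, $a_{2,m_2}$ a pure characteristic function (so that the Siegel--Walfisz Condition~3.1 holds), and all three coefficients vanishing on integers with a prime factor below $\exp(\log x (\log\log x)^{-2})$ (Condition~3.2), with Condition~3.3 following from the divisor bounds. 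The variable sizes satisfy $\sum_i \log_x M_i + \sum_k \alpha_k = 1$. Using the "partitions exactly" clause in the definition of $\boldsymbol{G}_j$, I would then combine the $p_k$'s with the $m_i$'s into two or three groups whose joint exponents $(s,t)$ (or $(s,t,1-s-t)$) lie in one of $\boldsymbol{g}_1, \boldsymbol{g}_2, \boldsymbol{g}_3, \boldsymbol{g}_4$, so that Lemmas~\ref{l31}, \ref{l32}, \ref{l33}, \ref{l34} respectively produce the required asymptotic, or in $\boldsymbol{T}^{**}$, the latter handled by the direct Linnik--dispersion argument underlying [\cite{HarmanBOOK}, Lemma~8.14] in which the constraint $t \leqslant (1-s)/2$ is exactly what is needed.

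The main obstacle is the combinatorial geometry of the covering: one must check that for every $\boldsymbol{\alpha}_j \in \boldsymbol{G}_j$ and every branch of the Heath--Brown decomposition, at least one grouping of the variables lands in $\boldsymbol{g}_1 \cup \boldsymbol{g}_2 \cup \boldsymbol{g}_3 \cup \boldsymbol{g}_4 \cup \boldsymbol{T}^{**}$. In [\cite{676}] this verification was carried out using only $\boldsymbol{g}_1, \boldsymbol{g}_2, \boldsymbol{g}_3$ and $\boldsymbol{T}^{**}$, but the corresponding $\boldsymbol{G}_j$ was smaller; the present enlarged $\boldsymbol{G}_j$ relies essentially on the new region $\boldsymbol{g}_4$ supplied by Lemma~\ref{l34}, and the covering has to be verified separately in each of the three sub-ranges appearing in the definition of $\kappa(\theta)$ by a direct comparison of linear inequalities. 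Once this polytope-containment check is confirmed, summing the resulting term-by-term asymptotics over the bounded number of Heath--Brown branches and dyadic boxes produces the claimed identity for almost all $q \sim x^\theta$, with the exceptional set of size $O(Q(\log x)^{-A})$ inherited from Lemmas~\ref{l31}--\ref{l34}.
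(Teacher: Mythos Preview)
The paper does not supply its own proof of this lemma; it simply cites [\cite{676}, Lemmas 7 and 18] and [\cite{HarmanBOOK}, Lemmas 8.6 and 8.14]. Your sketch captures the broad architecture of those references (reduce to bilinear/trilinear sums and invoke Lemmas~\ref{l31}--\ref{l34}, with the new region $\boldsymbol{g}_4$ handled by Lemma~\ref{l34}), but you insert an unnecessary layer. In the cited proofs there is no Heath--Brown step: the sum $S(\mathcal{A}_{p_1\cdots p_j}, p_j)$ is \emph{already} a trilinear sum once you group the $p_i$ according to the partition witnessing $\boldsymbol{\alpha}_j \in \boldsymbol{G}_j$. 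Two of the factors are products of the $p_i$ (of sizes $x^s, x^t$), and the third is the rough cofactor $n$ with $(n,P(p_j))=1$. The roughness indicator sits on $m_3$, where only Condition~3.2 is required; the Siegel--Walfisz Condition~3.1 is carried by one of the prime-product factors, for which it follows from the prime number theorem in arithmetic progressions.

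Consequently the ``covering'' obstacle you flag does not arise: the definition of $\boldsymbol{G}_j$ is precisely that $\boldsymbol{\alpha}_j$ admits an exact partition into $\boldsymbol{g}_1 \cup \boldsymbol{g}_2 \cup \boldsymbol{g}_3 \cup \boldsymbol{g}_4$ (or lies in $\boldsymbol{U}_j^*$), so the partition is \emph{given}, not something to be searched for branch by branch. Your worry would be genuine had you first shredded the rough variable via Heath--Brown, since then each branch would need its own admissible grouping; but that detour is absent from the standard argument. Finally, the three sub-ranges in the definition of $\kappa(\theta)$ play no role here --- the regions $\boldsymbol{g}_i$ and $\boldsymbol{T}^{**}$ depend only on $\theta$, not on $\kappa$ --- so that case split belongs to Lemma~\ref{l42}, not to this one.
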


\begin{lemma}\label{l42} ([\cite{676}, Lemmas 15 and 16], [\cite{HarmanBOOK}, Lemmas 8.11 and 8.12]).
Let $\frac{1}{2} \leqslant \theta < \frac{4}{7}$. Then we have, for almost all $q \sim x^{\theta}$,
$$
\sum_{\boldsymbol{\alpha}_{j} \in \boldsymbol{U}_{j}} S\left(\mathcal{A}_{p_1 \cdots p_j}, x^{\kappa}\right) = (1+o(1)) \frac{1}{\varphi(q)} \sum_{\boldsymbol{\alpha}_{j} \in \boldsymbol{U}_{j}} S\left(\mathcal{B}_{p_1 \cdots p_j}, x^{\kappa}\right).
$$
Let $\frac{1}{2} \leqslant \theta < \frac{11}{20}$. Then we have, for almost all $q \sim x^{\theta}$,
$$
\sum_{\boldsymbol{\alpha}_{j} \in \boldsymbol{U}_{j}^{\prime}} S\left(\mathcal{A}_{p_1 \cdots p_j}, x^{\kappa^{\prime}}\right) = (1+o(1)) \frac{1}{\varphi(q)} \sum_{\boldsymbol{\alpha}_{j} \in \boldsymbol{U}_{j}^{\prime}} S\left(\mathcal{B}_{p_1 \cdots p_j}, x^{\kappa^{\prime}}\right).
$$
\end{lemma}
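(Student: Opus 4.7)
The strategy is to reduce both asymptotic formulas to Type-II convolutions handled by one of Lemmas~\ref{l31}--\ref{l34}, following the proof plan of \cite[Lemmas 15 and 16]{676} and \cite[Lemmas 8.11 and 8.12]{HarmanBOOK} but with the enlarged admissible region afforded by Maynard's input (Lemma~\ref{l34}). First, I would open the sieve function as
\[
S(\mathcal{A}^q_{p_1\cdots p_j}, x^{\kappa_0})
 = \sum_{\substack{n :\ n p_1 \cdots p_j \in \mathcal{A}^q \\ (n, P(x^{\kappa_0}))=1}} 1,
\]
and observe that the sifted indicator on $n$ is a divisor-bounded sequence obeying Conditions 3.1 (by Siegel--Walfisz) and 3.2, so $n$ can play the Siegel--Walfisz role $a_{2,m_2}$ in any of Lemmas~\ref{l31}--\ref{l34}.

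Second, I would use the combinatorial hypothesis on $\boldsymbol{\alpha}_j$ to rearrange the variables $(n, p_1, \ldots, p_j)$ into two or three groups whose exponents realise the range of one of the Type-II lemmas. For $\theta \geq 7/13$, the condition $\boldsymbol{\alpha}_j \in \boldsymbol{T}_j^{*}$ supplies a partition $\{1, \ldots, j\} = I_1 \sqcup I_2$ with exponents $(s, t) = (\sum_{i \in I_1}\alpha_i, \sum_{i \in I_2}\alpha_i)$ in the box $\boldsymbol{T}^{*} = [0, (8\theta-2)/7] \times [0, (5-6\theta)/7]$; setting $m_1 = \prod_{i \in I_1} p_i$, $m_2 = \prod_{i \in I_2} p_i$ and $m_3 = n$, one checks that this box falls inside $\boldsymbol{g}_4$ for $\theta < 4/7$, so Lemma~\ref{l34} applies, with a fall-back to Lemma~\ref{l32} or~\ref{l33} on the sub-boxes lying in $\boldsymbol{g}_2$ or $\boldsymbol{g}_3$. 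For $\theta < 7/13$, the condition $(\alpha_1, \ldots, \alpha_j, 2\theta - 1 + \varepsilon) \in \boldsymbol{S}_{j+1}$ encodes a preliminary Buchstab-style extraction of a factor of scale $x^{2\theta-1+\varepsilon}$ from the sifted variable $n$ (matching the lower edge of the Lemma~\ref{l31} window); the enlarged $(j+1)$-tuple then admits a partition with $(s, t) \in \boldsymbol{S}$, and the three inequalities defining $\boldsymbol{S}$ are chosen precisely so that at least one of Lemmas~\ref{l31}--\ref{l33} applies to the resulting convolution on each sub-region.

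Applying the chosen Type-II lemma converts the $\mathcal{A}^q$-sum into its $\mathcal{B}^q$-counterpart up to a factor $1+o(1)$ for almost all $q \sim x^\theta$; summing back over the (bounded number of) partition choices and over $\boldsymbol{\alpha}_j$ yields the first statement. The second statement, with $\kappa^{\prime}$ in place of $\kappa$ on $\theta < 11/20$, is proved identically: since $\kappa^{\prime} \geq \kappa$ on this range, the sifting level is only relaxed, which enlarges $\boldsymbol{U}_j^{\prime}$ without altering the grouping argument.

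The main obstacle is the case analysis in the second step: for each sub-range of $\theta$ delimited by the breakpoints $\{17/32, 7/13, 11/20, 4/7\}$ arising from the piecewise definitions of $\kappa$, $\kappa^{\prime}$ and $\tau$, one must specify both the Siegel--Walfisz slot (i.e.\ which of $n$ or a prime product plays $m_2$) and the grouping rule, and verify that the exponent windows of the selected Type-II lemma are satisfied throughout---in particular, the upper bound $M_2 < Q^{-1} x^{1-2\varepsilon^2}$ of Lemma~\ref{l34} must be respected whenever $n$ is placed in the $m_2$ slot. This is a lengthy but routine bookkeeping exercise, and no further conceptual input beyond the Type-II lemmas of Section~3 is required.
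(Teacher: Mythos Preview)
Your high-level plan---reduce to the trilinear estimates of Section~3 via the partition encoded in $\boldsymbol{U}_j$---is the right shape, but the concrete implementation has two genuine gaps.

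First, for $\theta \geq 7/13$ you assign $m_1 = \prod_{I_1} p_i$, $m_2 = \prod_{I_2} p_i$, $m_3 = n$ and assert that the box $\boldsymbol{T}^{*}$ lies in $\boldsymbol{g}_4$. This is false: every one of Lemmas~\ref{l32}--\ref{l34} requires $M_1 M_2 > Q$, i.e.\ $\sigma_1 + \sigma_2 > \theta$ in exponents, and $\boldsymbol{T}^{*}$ allows $\sigma_1 + \sigma_2$ arbitrarily small (indeed, for $j=1$ and $\alpha_1$ near $\kappa$ the total is tiny). No permutation of the three roles rescues this, because the sifted variable $n$ then has exponent $1 - \sigma_1 - \sigma_2 > 1 - \theta$, which violates the $M_2 < x Q^{-1}$ condition whenever it sits in the Siegel--Walfisz slot, and makes $M_1 M_2$ too large for the upper constraints whenever it sits in $m_1$. (You also contradict yourself: you first say $n$ plays the $m_2$ role, then set $m_3 = n$.) The missing ingredient, present in \cite[Lemma~8.12]{HarmanBOOK}, is that since every prime factor of $n$ exceeds $x^{\kappa}$, one can greedily peel prime factors off $n$ to manufacture a sub-product of any prescribed logarithmic size up to granularity $\kappa$; this extra factor is then combined with one of the prime groups so that the resulting pair of exponents lands in $\boldsymbol{g}_1 \cup \cdots \cup \boldsymbol{g}_4$. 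The specific widths $(8\theta-2)/7$ and $(5-6\theta)/7$ of $\boldsymbol{T}^{*}$, and the value $\kappa = (3-5\theta)/7$, are chosen precisely so that this adjustment always succeeds. You correctly recognise this extraction mechanism for the $\theta < 7/13$ case (the appended $2\theta - 1 + \varepsilon$ in $\boldsymbol{S}_{j+1}$), but omit it for $\theta \geq 7/13$, where it is equally essential.

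Second, your claim that $\kappa' \geq \kappa$ on $7/13 < \theta < 11/20$ is backwards: there $\kappa' = (11-20\theta)/6 < (3-5\theta)/7 = \kappa$ (cf.\ the paper's Case~9), so sieving to $x^{\kappa'}$ is sieving to a \emph{lower} level. This does not break the argument---a smaller sifting level only helps the fundamental-lemma side---but your stated justification for why the second assertion follows from the first is incorrect, and the coarser granularity $\kappa'$ in the factor-extraction step is exactly why the region $\boldsymbol{U}_j'$ (via $\boldsymbol{S}$) differs from $\boldsymbol{U}_j$ (via $\boldsymbol{T}^{*}$) on this range.
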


\begin{lemma}\label{l43} ([\cite{HarmanBOOK}, Lemma 8.15]).
Let $\frac{1}{2} \leqslant \theta < \frac{17}{32}$ and $M < x^{2 - \varepsilon} Q^{-3}$. Then we have, for almost all $q \sim x^{\theta}$ and a divisor-bounded sequence $a_m$,
$$
\sum_{m \sim M} a_m S\left(\mathcal{A}_{m}, x^{\kappa}\right) = (1+o(1)) \frac{1}{\varphi(q)} \sum_{m \sim M} a_m S\left(\mathcal{B}_{m}, x^{\kappa}\right).
$$
Note that we have $x^{\frac{3}{7}} < x^{2 - \varepsilon} Q^{-3}$ when $\theta < \frac{11}{21} - \varepsilon$.
\end{lemma}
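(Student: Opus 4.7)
The plan is to apply iterated Buchstab decomposition to $S(\mathcal{A}_m, x^{\kappa})$ and match each piece against the Type-II estimate of Lemma~\ref{l31}. A single Buchstab step with cut-off $z = (2x/m)^{1/2}$ yields
$$
S(\mathcal{A}_m, x^{\kappa}) = S(\mathcal{A}_m, z) + \sum_{x^{\kappa} \leq p < z} S(\mathcal{A}_{mp}, p).
$$
The first term (up to an $O(M)$ contribution from $n=1$) counts $mn \in \mathcal{A}^q$ with $n$ prime, so weighting by $a_m$ produces the bilinear sum $\sum_{q \sim Q} \gamma_q \sum_{m \sim M} a_m \sum_{\substack{n \text{ prime}\\ mn\in \mathcal{A}^q}} 1$. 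The prime indicator meets Conditions 3.1--3.3, so Lemma~\ref{l31} applies provided $x/M$ sits in the BFI window $(x^{\varepsilon^2-1}Q^2,\ x^{5/6-\varepsilon^2}Q^{-4/3})$. The hypothesis $M < x^{2-\varepsilon} Q^{-3}$ easily secures the lower endpoint, but the upper endpoint fails when $M$ is too small, which is what forces further decomposition.

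For small $M$, I would iterate Buchstab, peeling off successive primes $p_1, \ldots, p_k$ until the residual rough factor $r = x/(m p_1 \cdots p_k)$ drops into the BFI window. At each stopping level the resulting multilinear sum can be regrouped as the bilinear convolution $(M_1, M_2) = (m p_1 \cdots p_k,\ r)$: the $r$-coefficient is the indicator for integers whose smallest prime factor exceeds $p_k$, and satisfies Condition 3.1 by standard Siegel--Walfisz for rough numbers together with Conditions 3.2--3.3 by construction. Lemma~\ref{l31} then applies on each piece, and by choosing the stopping level $k$ according to the dyadic sizes of $(m, p_1, \ldots, p_k)$, the contributions partition the whole sum without overlap.

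The main obstacle is verifying, case by case, that at every stopping level $k$ the grouped variable $m p_1 \cdots p_k$ lies in the allowed Type-II range $(x^{1/6+\varepsilon^2} Q^{4/3},\ x^{2-\varepsilon^2} Q^{-2})$. The restriction $\theta < 17/32$, together with the explicit choice $\kappa = (5-8\theta)/6 - \varepsilon$, is exactly what guarantees this after at most two iterations: it keeps $(2x/m)^{1/2}$ small enough that two Buchstab primes multiplied into $m$ cannot push $mp_1 p_2$ above $x^{2-\varepsilon^2} Q^{-2}$, while still allowing $r$ to fall into the upper half of the BFI window. The quantitative bound $M < x^{2-\varepsilon} Q^{-3}$ is the precise slack needed to accommodate these two extracted primes inside the $M_1$ slot of Lemma~\ref{l31}. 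With every piece validated, running the same decomposition with $\mathcal{B}^q$ in place of $\mathcal{A}^q$ produces matching bilinear sums which agree up to $(1+o(1))/\varphi(q)$ by Lemma~\ref{l31}, and summing over the $O((\log x)^k)$ Buchstab ranges yields the asserted asymptotic.
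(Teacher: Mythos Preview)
The paper does not prove this lemma but defers to Harman's book; your sketch, however, has a genuine gap.

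In your upward Buchstab iteration every extracted prime satisfies $p_i \geq x^\kappa$, so the residual $r$ summed in $S(\mathcal{A}_{mp_1\cdots p_k}, p_k)$ has all prime factors $\geq p_k \geq x^\kappa$. Hence each such $r$ is either $1$ or itself $\geq x^\kappa$, and for $\theta > \tfrac12$ it can never lie strictly inside the $M_2$-window $\bigl(x^{2\theta-1},\, x^{(5-8\theta)/6}\bigr)$ of Lemma~\ref{l31}; your stopping rule ``until $r$ drops into the BFI window'' is therefore never triggered non-degenerately, and no genuine bilinear sum is produced. The claim that two iterations suffice is separately false: for $M=1$ one needs $p_1\cdots p_k > x^{1-\kappa}$, which for $\theta$ near $\tfrac12$ (so $\kappa$ near $\tfrac16$) demands at least five primes of size $\geq x^{\kappa}$, not two.

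Harman's argument iterates Buchstab in the opposite direction, from $x^\kappa$ \emph{down} to a tiny level $w$, so the extracted primes $p_j < \cdots < p_1$ are all below $x^\kappa$. One stops the first time $p_1\cdots p_j > x^{2\theta-1}$; since then $p_1\cdots p_{j-1} \leq x^{2\theta-1}$ and (for $j\geq 2$) $p_j < p_1 \leq x^{2\theta-1}$, the stopped product obeys $p_1\cdots p_j \leq x^{2(2\theta-1)} < x^\kappa$, the last inequality being precisely $\theta < \tfrac{17}{32}$. Thus $M_2 = p_1\cdots p_j$ genuinely sits in the window and carries a Siegel--Walfisz coefficient, while $M_1 = mr$ absorbs the arbitrary $a_m$. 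The branches in which one never stops satisfy $mp_1\cdots p_j \leq Mx^{2\theta-1} \leq x^{2-3\theta}\cdot x^{2\theta-1} = x^{1-\theta}$, which is inside the Type-I range; these are dispatched by Type-I information together with a fundamental-lemma argument at level $w$, and it is here that the hypothesis $M < x^{2-\varepsilon}Q^{-3}$ is actually used. Your proposal omits the Type-I ingredient entirely, and without it the argument cannot close.
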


\begin{lemma}\label{l44} ([\cite{HarmanBOOK}, Lemma 8.16]).
Let $\frac{1}{2} \leqslant \theta < \frac{11}{21}$. Then we have, for almost all $q \sim x^{\theta}$,
$$
\sum_{\boldsymbol{\alpha}_{2} \in A \cup B} S\left(\mathcal{A}_{p_1 p_2}, x^{\kappa}\right) = (1+o(1)) \frac{1}{\varphi(q)} \sum_{\boldsymbol{\alpha}_{2} \in A \cup B} S\left(\mathcal{B}_{p_1 p_2}, x^{\kappa}\right).
$$
\end{lemma}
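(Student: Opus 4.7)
The plan is to rewrite the sum in question as the triple convolution
$$
\sum_{\boldsymbol{\alpha}_{2} \in A \cup B} \sum_{\substack{p_1 \sim x^{\alpha_1} \\ p_2 \sim x^{\alpha_2}}} \sum_{\substack{p_1 p_2 n \in \mathcal{A} \\ (n, P(x^\kappa)) = 1}} 1,
$$
in which $n$ has size $x^{1-\alpha_1-\alpha_2}$, and then to verify piecewise over the sub-regions $A$ and $B$ that an asymptotic formula of the desired form can be derived from one of the arithmetical inputs assembled in Section~3 (Lemmas~\ref{l31}--\ref{l34}) or the Type-I/II estimate of Lemma~\ref{l43}. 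As a preliminary, I would check that the rough indicator $b_n = \texttt{Boole}((n, P(x^\kappa)) = 1)$ satisfies Conditions~3.1--3.3: the Siegel--Walfisz property follows from the fundamental lemma of sieve theory; Condition~3.2 is immediate since $x^\kappa$ exceeds $\exp(\log x (\log \log x)^{-2})$; and Condition~3.3 follows from $\sum_{n \sim N,\ (n, P(x^\kappa))=1} 1 \gg N / \log x$. Consequently $b_n$ is admissible in the central Siegel--Walfisz slot of any Type-II$_j$ bound.

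For $\boldsymbol{\alpha}_{2} \in A$, where $\alpha_1 + \alpha_2 < \theta$, the product $p_1 p_2$ lies below $x^\theta$. I would treat $p_1 p_2$ as a single divisor-bounded sequence $a_m$ supported on $m \sim x^{\alpha_1 + \alpha_2}$ and apply Lemma~\ref{l43} to the resulting sum $\sum_m a_m S(\mathcal{A}_m, x^\kappa)$. Its hypothesis $M < x^{2-\varepsilon}Q^{-3}$ becomes $\alpha_1 + \alpha_2 < 2 - 3\theta - \varepsilon$; since $\theta < \tfrac{11}{21}$ gives $2 - 3\theta > \tfrac{3}{7}$, this handles the portion of $A$ with $\alpha_1 + \alpha_2 < 2-3\theta-\varepsilon$. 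The residual portion $\{2-3\theta-\varepsilon \leq \alpha_1 + \alpha_2 < \theta\}$, which is non-empty only when $\theta > \tfrac{1}{2}$, would be handled by invoking Lemma~\ref{l32} or~\ref{l33} on the triple $(p_1, p_2, n)$, placing $n$ in the central Siegel--Walfisz slot; the $U_2$-constraints $\alpha_1 \leq \tfrac{3}{7}$, $\alpha_2 \leq (1-\alpha_1)/2$, together with the exclusion $\boldsymbol{\alpha}_2 \notin \boldsymbol{G}_2$, are used to verify the four size hypotheses.

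For $\boldsymbol{\alpha}_{2} \in B$, the two defining inequalities $\alpha_1 + \alpha_2 \geq \theta$ and $\alpha_1 + 4\alpha_2 < 3-3\theta$ are arranged so that a suitable application of Lemma~\ref{l33} (or, after a role-swap, Lemma~\ref{l34}) delivers the asymptotic. The first inequality gives the lower bound $M_1 M_2 > Q$ required by the Type-II$_3$ hypotheses; the specific shape $\alpha_1 + 4\alpha_2 < 3 - 3\theta$, combined with $\alpha_1 \leq \tfrac{3}{7}$ and the exclusion from $\boldsymbol{G}_2$, translates into the remaining size constraints under the chosen role assignment. Summing the resulting asymptotic identities over $\boldsymbol{\alpha}_2 \in A \cup B$ then yields the claim.

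The main obstacle is the detailed geometric verification that at every point of $A \cup B$ one of Lemmas~\ref{l31}--\ref{l34} or~\ref{l43} applies under some role assignment. The regions $A$ and $B$ have been crafted precisely so that this is just possible, but the check requires a careful case analysis pitting the defining inequalities of $U_2$, $A$, $B$ against the size hypotheses of each Type-II lemma; this is the numerical content of [\cite{676}, Lemma~16] and [\cite{HarmanBOOK}, Lemma~8.16] that one replicates here.
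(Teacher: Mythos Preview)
Your opening reduction and the use of Lemma~\ref{l43} on the portion of $A$ with $\alpha_1+\alpha_2 < 2-3\theta$ are correct, and you are right that a residual subregion of $A$ (namely $1-\theta \leqslant \alpha_1+\alpha_2 < \theta$, after the $\boldsymbol{U}_2^*$-part of $\boldsymbol{G}_2$ removes $(\tfrac{3}{7},1-\theta)$) and all of $B$ still need a separate argument. The gap is in your proposed tool for that residual: invoking Lemmas~\ref{l32}--\ref{l34} on the triple $(p_1,p_2,n)$ cannot succeed. The regions $\boldsymbol{g}_2,\boldsymbol{g}_3,\boldsymbol{g}_4$ are \emph{defined} to be exactly the loci where Lemmas~\ref{l32}, \ref{l33}, \ref{l34} apply with $(M_1,M_2,M_3)=(x^s,x^t,x^{1-s-t})$, and $A\cup B$ lies in the complement of $\boldsymbol{G}_2$ by construction. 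Putting $n$ in the Siegel--Walfisz slot $M_2$ instead does not rescue the argument: for instance, at $\theta=0.51$ the point $(\alpha_1,\alpha_2)=(0.3,0.2)$ lies in $A$, yet every role assignment of $\{x^{0.3},x^{0.2},x^{0.5}\}$ to $(M_1,M_2,M_3)$ violates at least one hypothesis of each of Lemmas~\ref{l31}--\ref{l34}. So no pure Type-II$_3$ estimate from Section~3 covers this point.

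The proof in \cite{676} and \cite{HarmanBOOK} does not proceed via the trilinear Type-II bounds at all on $A\cup B$. Instead it exploits the sieve structure of $S(\mathcal{A}_{p_1p_2},x^\kappa)$: after a fundamental-lemma expansion one obtains sums with an additional \emph{smooth} variable, and the relevant arithmetical input is a Type-I/II (equivalently Type-I$_2$) estimate of Bombieri--Friedlander--Iwaniec flavour, encoded in this paper by the region $\boldsymbol{S}$ and alluded to in Section~2 but not restated among Lemmas~\ref{l31}--\ref{l34}. The defining inequality $\alpha_1+4\alpha_2<3-3\theta$ of $B$ is precisely one face of the applicability region of that Type-I/II input, not of any lemma in Section~3; this is why the geometric verification you flag as the ``main obstacle'' cannot be carried out with the toolbox you propose.
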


\section{High-dimensional sieves}
In this section, we mention several results regarding the upper bounds for some sieve functions. These upper bounds are proved using two- or three-dimensional sieves.

\begin{lemma}\label{l51} ([\cite{676}, Lemma 20], [\cite{HarmanBOOK}, Lemma 8.18]).
Let $\frac{1}{2} \leqslant \theta < \frac{17}{32}$. Then we have, for almost all $q \sim x^{\theta}$,
$$
- \sum_{1-\theta \leqslant \alpha_1 < \frac{1}{2}} S\left(\mathcal{A}_{p_1}, p_1 \right) \leqslant (1+o(1)) \frac{1}{\varphi(q)} \left( - \sum_{1-\theta \leqslant \alpha_1 < \frac{1}{2}} S\left(\mathcal{B}_{p_1}, p_1 \right) + \sum_{\substack{\kappa \leqslant \alpha_3 < \alpha_1 < \frac{\theta}{2} \\ 1 - \theta \leqslant \alpha_1 + \beta \leqslant \theta \\ \alpha_3 < \min\left(\beta, \frac{1}{2}(1-\alpha_1-\beta) \right) \\ (\alpha_1, \beta, \alpha_3) \notin \boldsymbol{G}_{3} }} S\left(\mathcal{B}_{p_1 m p_3}, p_3 \right) \right),
$$
where $m = x^{\beta}$ and $\left(m, P(p_1)\right) = 1$.
\end{lemma}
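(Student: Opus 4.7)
My plan is as follows. For $\theta < 17/32$ and $\alpha_1 \in [1-\theta, 1/2)$ we have $\alpha_1 > 1/3$, which forces the sieved variable $m$ in $S(\mathcal{A}_{p_1}, p_1) = \#\{m : p_1 m \in \mathcal{A},\ (m, P(p_1)) = 1\}$ to be a single prime $p_2$, since any composite $m$ with all prime factors $\geq p_1$ would require $m \geq p_1^2 > 2x/p_1$. Thus the LHS is, up to sign, a count of $P_2$-numbers $p_1 p_2 \in \mathcal{A}$ with $p_1$ in the ``critical'' range $[x^{1-\theta}, x^{1/2})$ and $p_2 \in (x^{1/2}, x^\theta]$. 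This is precisely the regime where neither the single-variable Type-I asymptotic (Lemma~\ref{l43}, which requires $\alpha_1 < 2-3\theta$, impossible when $\theta \geq 1/2$) nor the two-variable Type-II asymptotic (Lemma~\ref{l31}, which requires $1-\alpha_1 < 5/6 - 4\theta/3$) applies directly.

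I would then apply Buchstab's identity twice with threshold $x^\kappa$, giving
\[
-S(\mathcal{A}_{p_1}, p_1) = -S(\mathcal{A}_{p_1}, x^\kappa) + \sum_{\kappa \leq \alpha_2 < \alpha_1} S(\mathcal{A}_{p_1 p_2}, x^\kappa) - \sum_{\kappa \leq \alpha_3 < \alpha_2 < \alpha_1} S(\mathcal{A}_{p_1 p_2 p_3}, p_3),
\]
with the identical decomposition for $\mathcal{B}$. For the triple-sieve sum I will split by whether $(\alpha_1, \alpha_2, \alpha_3) \in \boldsymbol{G}_{3}$: tuples in $\boldsymbol{G}_{3}$ are matched to $\varphi(q)^{-1}\mathcal{B}$ via Lemma~\ref{l41}; tuples outside $\boldsymbol{G}_{3}$ enter with a negative sign, and after renaming variables $(\alpha_1, \alpha_2, \alpha_3) \mapsto (\beta, \alpha_1^{\mathrm{new}}, \alpha_3)$ — so that the outermost prime of size $x^{\alpha_1^{\mathrm{Buchstab}}}$ plays the role of the rough factor $m = x^\beta$ in the statement and the middle prime of size $x^{\alpha_2^{\mathrm{Buchstab}}}$ plays the role of the new $p_1$ satisfying $\alpha_1 < \theta/2$ — they furnish exactly the correction $\sum S(\mathcal{B}_{p_1 m p_3}, p_3)$ on the RHS, subject to the displayed constraints.

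For the single-sieve and double-sieve pieces $S(\mathcal{A}_{p_1}, x^\kappa)$ and $\sum S(\mathcal{A}_{p_1 p_2}, x^\kappa)$ I will use a role reversal: each pair $(p_1, p_2)$ with $\alpha_1 + \alpha_2$ in the critical range is reparametrized as $(p_1', m)$ with $p_1'$ small (size $< x^{\theta/2}$) and $m = x^\beta$ a rough factor with $(m, P(p_1')) = 1$, and the resulting sums then fall into the scope of the Type-II and Type-I/II asymptotic estimates of Lemmas~\ref{l31}--\ref{l33}. Combining with the mirror Buchstab identity for $\mathcal{B}$ reassembles the matched contributions into the main $\mathcal{B}$-term $-\sum S(\mathcal{B}_{p_1}, p_1)/\varphi(q)$ on the RHS. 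The hard part will be the bookkeeping of the role-reversal — verifying that the reparametrization produces exactly the displayed constraints $\kappa \leq \alpha_3 < \alpha_1 < \theta/2$, $1-\theta \leq \alpha_1 + \beta \leq \theta$, and $\alpha_3 < \min(\beta, (1-\alpha_1-\beta)/2)$, covering every tuple arising in the Buchstab iteration — together with aggregating the exceptional sets of moduli from each asymptotic application so that the total remains $O(x^\theta (\log x)^{-A})$, which secures the ``almost all $q \sim x^\theta$'' conclusion.
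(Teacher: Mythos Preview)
Your Buchstab iteration is applied to the wrong variable, and this is where the argument breaks. You iterate on the \emph{cofactor} $n/p_1$, producing terms $S(\mathcal{A}_{p_1}, x^{\kappa})$ and $\sum_{p_2} S(\mathcal{A}_{p_1 p_2}, x^{\kappa})$ with the outer prime $p_1$ still lying in the critical range $[x^{1-\theta}, x^{1/2})$. For these, Lemma~\ref{l43} requires $\alpha_1 < 2 - 3\theta$, which fails for all $\theta > \tfrac12$, and Lemma~\ref{l42} requires $\alpha_1 < \tau \leqslant \tfrac{3}{10}$, which also fails. Your ``role reversal'' is supposed to rescue this, but no reparametrisation of a pair (large prime, rough cofactor) produces a bilinear form fitting Lemmas~\ref{l31}--\ref{l33}: the size constraints there force one factor into $(x^{2\theta-1}, x^{5/6 - 4\theta/3})$, an interval disjoint from both $[x^{1-\theta}, x^{1/2})$ and its complement in $[1,x]$ once $\theta > \tfrac12$. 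Likewise, your relabelling $(\alpha_1,\alpha_2,\alpha_3)\mapsto(\beta,\alpha_1^{\mathrm{new}},\alpha_3)$ of the triple sum does not reproduce the error term: it forces $m=x^{\beta}$ to be a single prime rather than a general $P(p_1)$-rough integer, and $\alpha_1^{\mathrm{new}}=\alpha_2^{\mathrm{old}}$ need not be $<\theta/2$.

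The actual argument in \cite{676} and \cite{HarmanBOOK} is a genuine \emph{two-dimensional} sieve, as signalled by the section title and the remark at the end of Section~2 that Type-I$_2$ information is required. One exploits that $n=P_1P_2$ with both factors prime in $[x^{1-\theta},x^{\theta}]$, and that for $u$ in this range, $u$ is prime iff $(u,P(x^{\theta/2}))=1$. Buchstab is then applied to this roughness condition on the \emph{factor} $u$ (not on the cofactor $n/u$), introducing its least prime factor $p_1\in[x^{\kappa},x^{\theta/2})$ and a rough residual $m=u/p_1$ with $(m,P(p_1))=1$; this is exactly where the constraints $\alpha_1<\theta/2$, $\alpha_1+\beta\in[1-\theta,\theta]$, and the general rough $m$ in the correction sum come from. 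The resulting main terms are now Type-I$_2$ bilinear sums (one rough coefficient $a_m$, two smooth sieve variables), which is what brings them within range of the available arithmetic information; a second Buchstab on $n/u$ then produces $p_3$ and the final shape of the error. The point you are missing is that the sieve must act on the summation variable $p_1$ itself, not on what is being sieved.
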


\begin{lemma}\label{l52} ([\cite{676}, Lemma 24], [\cite{HarmanBOOK}, Lemma 8.23]).
Let $\frac{1}{2} \leqslant \theta < \frac{16}{31}$. Then we have, for almost all $q \sim x^{\theta}$,
$$
\sum_{\boldsymbol{\alpha}_{2} \in \boldsymbol{R}} S\left(\mathcal{A}_{p_1 p_2}, x^{\kappa}\right) \leqslant (1+o(1)) \frac{1}{\varphi(q)} \left( \sum_{\boldsymbol{\alpha}_{2} \in \boldsymbol{R}} S\left(\mathcal{B}_{p_1 p_2}, x^{\kappa}\right) + \frac{x}{\log x} (I_1 + I_2) \right),
$$
where
$$
I_1 = \frac{1}{\kappa} \int_{(t_1, t_2, t_3) \in \boldsymbol{D}_{1}} \frac{\omega\left(\frac{1 - t_1 - t_2 - t_3}{\kappa}\right)}{t_1 t_2 t_3} d t_3 d t_2 d t_1,
$$
$$
I_2 = \frac{1}{\kappa} \int_{(t_1, t_2, t_3, t_4) \in \boldsymbol{D}_{2}} \frac{\omega\left(\frac{1 - t_1 - t_2 - t_3 - t_4}{\kappa}\right)}{t_1 t_2 t_3 t_4} d t_4 d t_3 d t_2 d t_1,
$$
where $\omega(u)$, here and below, denote the Buchstab function determined by the following differential--difference equation
\begin{align*}
\begin{cases}
\omega(u)=\frac{1}{u}, & \quad 1 \leqslant u \leqslant 2, \\
(u \omega(u))^{\prime}= \omega(u-1), & \quad u \geqslant 2 .
\end{cases}
\end{align*}
\end{lemma}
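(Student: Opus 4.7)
The plan is to carry out a two-round Buchstab decomposition of $S(\mathcal{A}_{p_1 p_2}, x^{\kappa})$, separating asymptotically computable pieces (via the Type-II Lemmas~\ref{l31}--\ref{l34} together with Lemma~\ref{l41}) from a residual contribution which ultimately matches the integrals $I_1$ and $I_2$.

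The first step applies Buchstab's identity upward from the threshold $x^{\kappa}$ to $z_1 := (2x/(p_1 p_2))^{1/2}$:
\[
S(\mathcal{A}_{p_1 p_2}, x^{\kappa}) = S(\mathcal{A}_{p_1 p_2}, z_1) + \sum_{x^{\kappa} \leqslant p_3 < z_1} S(\mathcal{A}_{p_1 p_2 p_3}, p_3).
\]
The sifted term $S(\mathcal{A}_{p_1 p_2}, z_1)$ simply counts three-prime products $p_1 p_2 p \equiv a \pmod{q}$ with $p \sim x/(p_1 p_2)$, and the conditions defining $\boldsymbol{R}$---in particular $\alpha_1 + 4\alpha_2 \geqslant 3-3\theta$, $3\alpha_2 \geqslant 2\alpha_1$, $\alpha_1 + 2\alpha_2 \leqslant 1$, and $\alpha_1 \leqslant \min(3/7, 4-7\theta)$---are tailored so that, after an appropriate two-variable grouping of the three primes and a dyadic splitting, Lemmas~\ref{l31}--\ref{l34} yield $\sum_{\boldsymbol{\alpha}_2 \in \boldsymbol{R}} S(\mathcal{A}_{p_1 p_2}, z_1) = (1+o(1)) \frac{1}{\varphi(q)} \sum_{\boldsymbol{\alpha}_2 \in \boldsymbol{R}} S(\mathcal{B}_{p_1 p_2}, z_1)$. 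I then split the Buchstab sum over $p_3$ according to whether $\boldsymbol{\alpha}_3 \in \boldsymbol{G}_3$: the $\boldsymbol{G}_3$-part satisfies an asymptotic formula by Lemma~\ref{l41}, and running the analogous Buchstab identity on the $\mathcal{B}$-side in reverse collapses the collected asymptotic pieces to $\frac{1}{\varphi(q)}\sum_{\boldsymbol{\alpha}_2 \in \boldsymbol{R}} S(\mathcal{B}_{p_1 p_2}, x^{\kappa})$, leaving a residue indexed by those $\boldsymbol{\alpha}_3 \notin \boldsymbol{G}_3$ arising from $\boldsymbol{\alpha}_2 \in \boldsymbol{R}$.

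Next I address this residue. By direct inspection of the inequalities cutting out $\boldsymbol{R}$ and $\boldsymbol{G}_3$, every such residual 3-tuple $\boldsymbol{\alpha}_3$ satisfies either $(\alpha_1, \alpha_2+\alpha_3) \in \boldsymbol{R}$ with $\alpha_2 \geqslant \alpha_3$ or $(\alpha_1+\alpha_2, \alpha_3) \in \boldsymbol{R}$ with $\alpha_1 \geqslant \alpha_2$, placing $\boldsymbol{\alpha}_3 \in \boldsymbol{D}_1$. For each such configuration I apply Buchstab's identity a second time, now lowering the threshold from $p_3$ to $x^{\kappa}$:
\[
S(\mathcal{A}_{p_1 p_2 p_3}, p_3) = S(\mathcal{A}_{p_1 p_2 p_3}, x^{\kappa}) - \sum_{x^{\kappa} \leqslant p_4 < p_3} S(\mathcal{A}_{p_1 p_2 p_3 p_4}, p_4).
\]
The first term is handled asymptotically by Type-I (Lemma~\ref{l43}, applied with the $\boldsymbol{D}_1$-grouping so that the modulus remains admissible in the range $\theta < 16/31$); within the inner sum I isolate the $\boldsymbol{G}_4$-part (asymptotic via Lemma~\ref{l41}) and, by the same merging argument, the remaining 4-tuples land in $\boldsymbol{D}_2$, which I simply lower-bound by zero. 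Performing the parallel Buchstab bookkeeping on the $\mathcal{B}$-side, the entire $\boldsymbol{D}_1$-residue is bounded by $\frac{1+o(1)}{\varphi(q)}\bigl(\sum_{\boldsymbol{\alpha}_3 \in \boldsymbol{D}_1} S(\mathcal{B}_{p_1 p_2 p_3}, x^{\kappa}) + \sum_{\boldsymbol{\alpha}_4 \in \boldsymbol{D}_2} S(\mathcal{B}_{p_1 p_2 p_3 p_4}, x^{\kappa})\bigr)$.

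Finally, I pass from these $\mathcal{B}$-sieve sums to the integrals $I_1$ and $I_2$ via the prime number theorem together with the standard Buchstab asymptotic
\[
S(\mathcal{B}_n, x^{\kappa}) = (1+o(1))\,\frac{x/n}{\kappa \log x}\,\omega\!\left(\frac{1-\log n / \log x}{\kappa}\right);
\]
summing over primes $p_i \sim x^{\alpha_i}$ and converting the density $dp_i/p_i$ into $\log x\,d\alpha_i$ produces exactly $\frac{x}{\log x}(I_1 + I_2)$, with the $\frac{1}{\kappa}$ prefactor in each $I_j$ arising from the common sifting threshold $x^{\kappa}$. The principal obstacle is the combinatorial verification that each non-$\boldsymbol{G}_j$ configuration produced by the Buchstab steps really lies in $\boldsymbol{D}_{j-2}$: this closure of $\boldsymbol{R}$ under both allowed mergings is precisely what forces the upper cutoff $\theta < 16/31$, since beyond that point the merged pair $(\alpha_1+\alpha_2, \alpha_3+\alpha_4)$ can violate $3\alpha_2' \geqslant 2\alpha_1'$ or $\alpha_1' \leqslant 4-7\theta$ and escape $\boldsymbol{R}$, breaking the inductive step.
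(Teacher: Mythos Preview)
Your approach has a genuine gap at the very first step. You claim that $\sum_{\boldsymbol{\alpha}_2 \in \boldsymbol{R}} S(\mathcal{A}_{p_1 p_2}, z_1)$ can be handled asymptotically via the Type-II Lemmas~\ref{l31}--\ref{l34}, but this is precisely what fails on $\boldsymbol{R}$: the region $\boldsymbol{R}$ is constructed to cover $C$, which by definition lies \emph{outside} $\boldsymbol{G}_2$, i.e.\ outside the locus where any grouping of $(p_1,p_2,p)$ satisfies the bilinear conditions. Concretely, take $\theta = \tfrac12$ and $(\alpha_1,\alpha_2)=(\tfrac37,\tfrac27)\in\boldsymbol{R}$; then the triple $(\tfrac37,\tfrac27,\tfrac27)$ fails every admissible assignment of $(M_1,M_2,M_3)$ in Lemmas~\ref{l32}--\ref{l34} (for instance $2\alpha_1+3\alpha_2=\tfrac{12}{7}>1+\theta$, $\alpha_2+2\alpha_1+2\theta=\tfrac{15}{7}>2$, and $4(1-\alpha_1-\alpha_2)+\alpha_2=\tfrac{10}{7}>\tfrac{57}{32}-\theta$). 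So your anchor term has no asymptotic, and the rest of the argument never starts.

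The actual argument in \cite{676} and \cite{HarmanBOOK} is structurally different: it is a \emph{two-dimensional} sieve, applying Buchstab to the prime variables $p_1,p_2$ themselves rather than to the sifted elements of $\mathcal{A}_{p_1 p_2}$. One writes the indicator of ``$p_1$ prime'' (and of ``$p_2$ prime'') as a Buchstab sum and feeds the resulting rough-number weights into a Type-I/II estimate of the shape $\sum_{m,n} a_m b_n \sum_{mnl\in\mathcal{A}} 1$ (cf.\ the Type-I/II display in Section~2), which is not among Lemmas~\ref{l31}--\ref{l34}. The defining inequalities of $\boldsymbol{R}$ (in particular $\alpha_1+4\alpha_2\geqslant 3-3\theta$, $3\alpha_2\geqslant 2\alpha_1$, and $\alpha_1\leqslant 4-7\theta$) are exactly the admissibility conditions for this Type-I/II input, and the structure of $\boldsymbol{D}_1,\boldsymbol{D}_2$ --- requiring a \emph{merged pair} to lie in $\boldsymbol{R}$ --- records that after one or two Buchstab steps on the $p_i$-variables the new coarse pair remains in range. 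Your claim that a residual $(\alpha_1,\alpha_2,\alpha_3)$ with $(\alpha_1,\alpha_2)\in\boldsymbol{R}$ automatically has $(\alpha_1,\alpha_2+\alpha_3)\in\boldsymbol{R}$ or $(\alpha_1+\alpha_2,\alpha_3)\in\boldsymbol{R}$ is also false in general (e.g.\ $(\alpha_1,\alpha_2)=(0.35,0.3)$, $\alpha_3=\kappa$ at $\theta=\tfrac12$ lands in neither), confirming that $\boldsymbol{D}_1$ is not the residue of your one-dimensional iteration but of the two-dimensional one.
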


\begin{lemma}\label{l53} ([\cite{677}, Lemma 24]).
Let $\frac{25}{49} \leqslant \theta \leqslant \frac{92}{175}$. Then we have, for almost all $q \sim x^{\theta}$,
$$
\sum_{\boldsymbol{\alpha}_{2} \in \boldsymbol{R}_0} S\left(\mathcal{A}_{p_1 p_2}, x^{\kappa}\right) \leqslant (1+o(1)) \frac{1}{\varphi(q)} \left( \sum_{\boldsymbol{\alpha}_{2} \in \boldsymbol{R}_0} S\left(\mathcal{B}_{p_1 p_2}, x^{\kappa}\right) + \frac{x}{\log x} (I_3 + I_4) \right),
$$
where
$$
I_3 = \frac{1}{\kappa} \int_{(t_1, t_2, t_3) \in \boldsymbol{D}_{3}} \frac{\omega\left(\frac{1 - t_1 - t_2 - t_3}{\kappa}\right)}{t_1 t_2 t_3} d t_3 d t_2 d t_1,
$$
$$
I_4 = \frac{1}{\kappa} \int_{(t_1, t_2, t_3, t_4) \in \boldsymbol{D}_{4}} \frac{\omega\left(\frac{1 - t_1 - t_2 - t_3 - t_4}{\kappa}\right)}{t_1 t_2 t_3 t_4} d t_4 d t_3 d t_2 d t_1.
$$
\end{lemma}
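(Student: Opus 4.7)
The plan is to follow the three-dimensional alternative sieve argument of Baker--Harman [\cite{677}, Lemma 24] essentially verbatim: the region $\boldsymbol{R}_0$ and its $\theta$-range $[\tfrac{25}{49}, \tfrac{92}{175}]$ were tailored in that paper so that each term produced by a suitable Buchstab decomposition either lands in a sub-region where the Type-II information of Section~3 yields an asymptotic formula, or else can be absorbed into $I_3 + I_4$.

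Concretely, for $\boldsymbol{\alpha}_2 = (\alpha_1, \alpha_2) \in \boldsymbol{R}_0$ I would apply Buchstab's identity (once, then with a role-reversal) to $S(\mathcal{A}_{p_1 p_2}, x^\kappa)$, decomposing it into sums of $S(\mathcal{A}_{p_1 p_2}, p_2)$, $S(\mathcal{A}_{p_1 p_2 p_3}, x^\kappa)$ and $S(\mathcal{A}_{p_1 p_2 p_3 p_4}, p_4)$. Each resulting sum is then classified by whether its exponent tuple, after coalescing adjacent $p_i$'s into two or three blocks of shape $(M_1, M_2)$ or $(M_1, M_2, M_3)$, admits one of the Type-II asymptotic formulas in Lemmas~\ref{l31}--\ref{l33}, or whether it partitions exactly into $\boldsymbol{G}_j$. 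For each such good piece, Lemmas~\ref{l41}--\ref{l44} replace $S(\mathcal{A}_\cdot, \cdot)$ by $\varphi(q)^{-1} S(\mathcal{B}_\cdot, \cdot)$ with acceptable error on almost all $q \sim x^\theta$. Reassembling the good pieces via the reverse Buchstab identity on the $\mathcal{B}$-side produces the main term $\varphi(q)^{-1} \sum_{\boldsymbol{\alpha}_2 \in \boldsymbol{R}_0} S(\mathcal{B}_{p_1 p_2}, x^\kappa)$ on the right-hand side.

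The remaining contributions are by construction the tuples lying in $\boldsymbol{D}_3$ and $\boldsymbol{D}_4$, characterised by the property that \emph{both} natural coalescences --- $(p_1, p_2 p_3)$ and $(p_1 p_2, p_3)$ in the three-variable case, or $(p_1 p_2, p_3 p_4)$ in the four-variable case --- land back in $\boldsymbol{R}_0$, so no Type-II estimate applies. For these I would bound $S(\mathcal{A}_{p_1 \cdots p_j}, x^\kappa)$ above by the fundamental-lemma estimate $(1+o(1)) \varphi(q)^{-1} \omega\bigl((1-\alpha_1 - \cdots - \alpha_j)/\kappa\bigr) \, x / (\kappa \log x \prod p_i)$, valid on average over $q \sim Q$ via an averaged Brun--Titchmarsh-type upper bound for $\pi(x; q, a)$. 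Passing from sums over primes to Riemann integrals by the prime number theorem converts these contributions into precisely $\tfrac{x}{\log x}(I_3 + I_4)$.

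The principal obstacle is the combinatorial bookkeeping that every Buchstab-generated subregion does fall into one of the three ``good'' categories --- $\boldsymbol{G}_j$, the hypotheses of Lemmas~\ref{l31}--\ref{l33} under some grouping, or $\boldsymbol{D}_3 \cup \boldsymbol{D}_4$ --- with no gaps, and correctly handling the signs so that the reassembled good pieces really do cancel against a subset of the $\mathcal{B}$-expansion. The constants $\tfrac{19\theta-7}{7}$ and $\tfrac{50\theta-19}{17}$ bounding $\alpha_1$ from below in $\boldsymbol{R}_0$ emerge from Lemma~\ref{l33}'s inequalities $M_1 M_2^2 Q^2 < x^{2-3\varepsilon^2}$ and $M_1^5 M_2^2 < x^{2-3\varepsilon^2}$ applied to the two coalesced groupings; the endpoints $\theta = \tfrac{25}{49}$ and $\theta = \tfrac{92}{175}$ of the lemma's range are precisely the places where one of these inequalities becomes tight, so that no further savings can be extracted from the three-dimensional framework alone.
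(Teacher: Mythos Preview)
The paper does not supply its own proof of this lemma; it simply records the statement and cites [\cite{677}, Lemma 24], so there is nothing to compare your argument against beyond that citation. Your sketch correctly identifies the source and captures the overall strategy of the Baker--Harman argument --- Buchstab-type decomposition, classification of the resulting pieces by whether a Type-II grouping (Lemmas~\ref{l31}--\ref{l33}) or a $\boldsymbol{G}_j$-partition applies, and absorption of the residual pieces into the Buchstab-integral losses $I_3, I_4$ via a fundamental-lemma upper bound.

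One caution if you actually intend to carry out the reconstruction: the ``three-dimensional alternative sieve'' of [\cite{677}] is not simply an iterated one-dimensional Buchstab identity on $S(\mathcal{A}_{p_1 p_2}, x^\kappa)$ as your second paragraph suggests. It is a genuinely multi-variable sieve in which one sieves simultaneously in the $p_1$- and $p_2$-directions; the role-reversal you allude to is part of this, but the bookkeeping that produces exactly $\boldsymbol{D}_3$ and $\boldsymbol{D}_4$ as the residual regions (and the specific lower bounds $\tfrac{19\theta-7}{7}$, $\tfrac{50\theta-19}{17}$ on $\alpha_1$) depends on that structure rather than on a linear chain of Buchstab steps. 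Your description of the mechanism is therefore morally right but would need tightening to match the actual combinatorics of [\cite{677}].
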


\section{The final decomposition}
In this section, we ignore the presence of $\varepsilon$ for clarity. Let $\omega(u)$ denote the Buchstab function defined in Lemma~\ref{l52}. Let $\frac{1}{2} \leqslant \theta \leqslant 0.679$. We shall split this range of $\theta$ to several subranges and use different methods to treat them and obtain good bounds for $C(\theta)$.

\subsection{Case 1. $\frac{1}{2} \leqslant \theta < \frac{25}{49}$}
In the first subrange, we can use the lemmas in Sections 4 and 5 with their full power. Using Buchstab's identity, we have
\begin{align}
\nonumber S\left(\mathcal{A}, (2x)^{\frac{1}{2}} \right) =&\ S\left(\mathcal{A}, x^{\kappa} \right) - \sum_{\kappa \leqslant \alpha_1 < \frac{1}{2}} S\left(\mathcal{A}_{p_1}, p_1 \right) \\
\nonumber =&\ S\left(\mathcal{A}, x^{\kappa} \right) - \sum_{\kappa \leqslant \alpha_1 \leqslant \frac{3}{7}} S\left(\mathcal{A}_{p_1}, p_1 \right) - \sum_{\frac{3}{7} < \alpha_1 < 1 - \theta} S\left(\mathcal{A}_{p_1}, p_1 \right) - \sum_{1 - \theta \leqslant \alpha_1 < \frac{1}{2}} S\left(\mathcal{A}_{p_1}, p_1 \right) \\
=&\ S_{11} - S_{12} - S_{13} - S_{14}.
\end{align}
By Lemma~\ref{l43}, we can give an asymptotic formula for $S_{11}$. By Lemma~\ref{l41}, we can also give an asymptotic formula for $S_{13}$. For $S_{14}$, we can use Lemma~\ref{l51} to give an upper bound with a loss of
\begin{equation}
\sum_{\substack{\kappa \leqslant \alpha_3 < \alpha_1 < \frac{\theta}{2} \\ 1 - \theta \leqslant \alpha_1 + \beta \leqslant \theta \\ \alpha_3 < \min\left(\beta, \frac{1}{2}(1-\alpha_1-\beta) \right) \\ (\alpha_1, \beta, \alpha_3) \notin \boldsymbol{G}_{3} }} S\left(\mathcal{A}_{p_1 m p_3}, p_3 \right).
\end{equation}

For the remaining $S_{12}$, we perform Buchstab's identity twice again to get
\begin{align}
\nonumber S_{12} =&\ \sum_{\kappa \leqslant \alpha_1 \leqslant \frac{3}{7}} S\left(\mathcal{A}_{p_1}, p_1 \right) \\
\nonumber =&\ \sum_{\kappa \leqslant \alpha_1 \leqslant \frac{3}{7}} S\left(\mathcal{A}_{p_1}, x^{\kappa} \right) - \sum_{\substack{\kappa \leqslant \alpha_1 \leqslant \frac{3}{7} \\ \kappa \leqslant \alpha_2 < \min\left(\alpha_1, \frac{1}{2}(1-\alpha_1)\right)}} S\left(\mathcal{A}_{p_1 p_2}, p_2 \right) \\
\nonumber =&\ \sum_{\kappa \leqslant \alpha_1 \leqslant \frac{3}{7}} S\left(\mathcal{A}_{p_1}, x^{\kappa} \right) - \sum_{\substack{\kappa \leqslant \alpha_1 \leqslant \frac{3}{7} \\ \kappa \leqslant \alpha_2 < \min\left(\alpha_1, \frac{1}{2}(1-\alpha_1)\right) \\ \boldsymbol{\alpha}_{2} \in \boldsymbol{G}_{2} }} S\left(\mathcal{A}_{p_1 p_2}, p_2 \right) - \sum_{\substack{\kappa \leqslant \alpha_1 \leqslant \frac{3}{7} \\ \kappa \leqslant \alpha_2 < \min\left(\alpha_1, \frac{1}{2}(1-\alpha_1)\right) \\ \boldsymbol{\alpha}_{2} \notin \boldsymbol{G}_{2} }} S\left(\mathcal{A}_{p_1 p_2}, p_2 \right) \\
=&\ S_{121} - S_{122} - S_{123}.
\end{align}
We can give asymptotic formulas for $S_{121}$ by Lemma~\ref{l43} and for $S_{122}$ by Lemma~\ref{l41}. For $S_{123}$, we can easily find that $\boldsymbol{\alpha}_{2} \in U_2$. Thus, we split it into two parts $\boldsymbol{\alpha}_{2} \in A \cup B$ and $\boldsymbol{\alpha}_{2} \in C$:
\begin{align}
\nonumber S_{123} =&\ \sum_{\boldsymbol{\alpha}_{2} \in A \cup B } S\left(\mathcal{A}_{p_1 p_2}, p_2 \right) + \sum_{\boldsymbol{\alpha}_{2} \in C } S\left(\mathcal{A}_{p_1 p_2}, p_2 \right) \\
\nonumber =&\ \sum_{\boldsymbol{\alpha}_{2} \in A \cup B } S\left(\mathcal{A}_{p_1 p_2}, x^{\kappa} \right) - \sum_{\substack{\boldsymbol{\alpha}_{2} \in A \cup B \\ \kappa \leqslant \alpha_3 < \min\left(\alpha_2, \frac{1}{2}(1-\alpha_1 -\alpha_2)\right) \\ \boldsymbol{\alpha}_{3} \in \boldsymbol{G}_{3} }} S\left(\mathcal{A}_{p_1 p_2 p_3}, p_3 \right) - \sum_{\substack{\boldsymbol{\alpha}_{2} \in A \cup B \\ \kappa \leqslant \alpha_3 < \min\left(\alpha_2, \frac{1}{2}(1-\alpha_1 -\alpha_2)\right) \\ \boldsymbol{\alpha}_{3} \notin \boldsymbol{G}_{3} }} S\left(\mathcal{A}_{p_1 p_2 p_3}, p_3 \right) \\
\nonumber & + \sum_{\boldsymbol{\alpha}_{2} \in C } S\left(\mathcal{A}_{p_1 p_2}, x^{\kappa} \right) - \sum_{\substack{\boldsymbol{\alpha}_{2} \in C \\ \kappa \leqslant \alpha_3 < \min\left(\alpha_2, \frac{1}{2}(1-\alpha_1 -\alpha_2)\right) \\ \boldsymbol{\alpha}_{3} \in \boldsymbol{G}_{3} }} S\left(\mathcal{A}_{p_1 p_2 p_3}, p_3 \right) - \sum_{\substack{\boldsymbol{\alpha}_{2} \in C \\ \kappa \leqslant \alpha_3 < \min\left(\alpha_2, \frac{1}{2}(1-\alpha_1 -\alpha_2)\right) \\ \boldsymbol{\alpha}_{3} \notin \boldsymbol{G}_{3} }} S\left(\mathcal{A}_{p_1 p_2 p_3}, p_3 \right) \\
=&\ S_{1231} - S_{1232} - S_{1233} + S_{1234} - S_{1235} - S_{1236}.
\end{align}
For $S_{1231}$, we can use Lemma~\ref{l44} to give an asymptotic formula. We can also give asymptotic formulas for $S_{1232}$ and $S_{1235}$ by Lemma~\ref{l41}. However, for $S_{1234}$ we cannot give an asymptotic formula. Note that we cannot discard any part of $S_{1234}$ since it has a positive sign. Now we must use Lemma~\ref{l52} to give an upper bound for this sum. After checking the boundaries of the region $\boldsymbol{R}$, we find that $\boldsymbol{R}$ covers the whole region $C$, and the loss from $S_{1234}$ is the sum of two corresponding integrals $I_1$ and $I_2$. We discard the whole sum $S_{1236}$.

Now we only need to deal with the remaining sum $S_{1233}$. We want to use Buchstab's identity twice again, hence we need to check whether the conditions in Lemma~\ref{l42} are fulfilled. That is, if we have $\boldsymbol{\alpha}_{3} \in \boldsymbol{U}_3$ and
$$
(\alpha_1, \alpha_2, \alpha_3, \alpha_3) \in \boldsymbol{U}_4,
$$
for parts of $S_{1233}$ (say, $S_{1233}^{\prime}$), then we can use Buchstab's identity twice again to get
\begin{align}
\nonumber S_{1233}^{\prime} =&\ \sum_{\substack{\boldsymbol{\alpha}_{2} \in A \cup B \\ \kappa \leqslant \alpha_3 < \min\left(\alpha_2, \frac{1}{2}(1-\alpha_1 -\alpha_2)\right) \\ \boldsymbol{\alpha}_{3} \notin \boldsymbol{G}_{3} \\ \boldsymbol{\alpha}_{3} \in \boldsymbol{U}_3 \\ (\alpha_1, \alpha_2, \alpha_3, \alpha_3) \in \boldsymbol{U}_4 }} S\left(\mathcal{A}_{p_1 p_2 p_3}, p_3 \right) \\
\nonumber =&\ \sum_{\substack{\boldsymbol{\alpha}_{2} \in A \cup B \\ \kappa \leqslant \alpha_3 < \min\left(\alpha_2, \frac{1}{2}(1-\alpha_1 -\alpha_2)\right) \\ \boldsymbol{\alpha}_{3} \notin \boldsymbol{G}_{3} \boldsymbol{\alpha}_{3} \in \boldsymbol{U}_3 \\ (\alpha_1, \alpha_2, \alpha_3, \alpha_3) \in \boldsymbol{U}_4 }} S\left(\mathcal{A}_{p_1 p_2 p_3}, x^{\kappa} \right) - \sum_{\substack{\boldsymbol{\alpha}_{2} \in A \cup B \\ \kappa \leqslant \alpha_3 < \min\left(\alpha_2, \frac{1}{2}(1-\alpha_1 -\alpha_2)\right) \\ \boldsymbol{\alpha}_{3} \notin \boldsymbol{G}_{3} \\ \boldsymbol{\alpha}_{3} \in \boldsymbol{U}_3 \\ (\alpha_1, \alpha_2, \alpha_3, \alpha_3) \in \boldsymbol{U}_4 \\ \kappa \leqslant \alpha_4 < \min\left(\alpha_3, \frac{1}{2}(1-\alpha_1 -\alpha_2 -\alpha_3)\right) \\ \boldsymbol{\alpha}_{4} \in \boldsymbol{G}_{4} }} S\left(\mathcal{A}_{p_1 p_2 p_3 p_4}, p_4 \right) \\
\nonumber & - \sum_{\substack{\boldsymbol{\alpha}_{2} \in A \cup B \\ \kappa \leqslant \alpha_3 < \min\left(\alpha_2, \frac{1}{2}(1-\alpha_1 -\alpha_2)\right) \\ \boldsymbol{\alpha}_{3} \notin \boldsymbol{G}_{3} \\ \boldsymbol{\alpha}_{3} \in \boldsymbol{U}_3 \\ (\alpha_1, \alpha_2, \alpha_3, \alpha_3) \in \boldsymbol{U}_4 \\ \kappa \leqslant \alpha_4 < \min\left(\alpha_3, \frac{1}{2}(1-\alpha_1 -\alpha_2 -\alpha_3)\right) \\ \boldsymbol{\alpha}_{4} \notin \boldsymbol{G}_{4} }} S\left(\mathcal{A}_{p_1 p_2 p_3 p_4}, x^{\kappa} \right) + \sum_{\substack{\boldsymbol{\alpha}_{2} \in A \cup B \\ \kappa \leqslant \alpha_3 < \min\left(\alpha_2, \frac{1}{2}(1-\alpha_1 -\alpha_2)\right) \\ \boldsymbol{\alpha}_{3} \notin \boldsymbol{G}_{3} \\ \boldsymbol{\alpha}_{3} \in \boldsymbol{U}_3 \\ (\alpha_1, \alpha_2, \alpha_3, \alpha_3) \in \boldsymbol{U}_4 \\ \kappa \leqslant \alpha_4 < \min\left(\alpha_3, \frac{1}{2}(1-\alpha_1 -\alpha_2-\alpha_3)\right) \\ \boldsymbol{\alpha}_{4} \notin \boldsymbol{G}_{4} \\ \kappa \leqslant \alpha_5 < \min\left(\alpha_4, \frac{1}{2}(1-\alpha_1 -\alpha_2 -\alpha_3 -\alpha_4)\right) \\ \boldsymbol{\alpha}_{5} \in \boldsymbol{G}_{5} }} S\left(\mathcal{A}_{p_1 p_2 p_3 p_4 p_5}, p_5 \right) \\
\nonumber & + \sum_{\substack{\boldsymbol{\alpha}_{2} \in A \cup B \\ \kappa \leqslant \alpha_3 < \min\left(\alpha_2, \frac{1}{2}(1-\alpha_1 -\alpha_2)\right) \\ \boldsymbol{\alpha}_{3} \notin \boldsymbol{G}_{3} \\ \boldsymbol{\alpha}_{3} \in \boldsymbol{U}_3 \\ (\alpha_1, \alpha_2, \alpha_3, \alpha_3) \in \boldsymbol{U}_4 \\ \kappa \leqslant \alpha_4 < \min\left(\alpha_3, \frac{1}{2}(1-\alpha_1 -\alpha_2-\alpha_3)\right) \\ \boldsymbol{\alpha}_{4} \notin \boldsymbol{G}_{4} \\ \kappa \leqslant \alpha_5 < \min\left(\alpha_4, \frac{1}{2}(1-\alpha_1 -\alpha_2 -\alpha_3 -\alpha_4)\right) \\ \boldsymbol{\alpha}_{5} \notin \boldsymbol{G}_{5} }} S\left(\mathcal{A}_{p_1 p_2 p_3 p_4 p_5}, p_5 \right) \\
=&\ S_{12331} - S_{12332} - S_{12333} + S_{12334} + S_{12335}.
\end{align}
We can give asymptotic formulas for $S_{12332}$ and $S_{12334}$ by Lemma~\ref{l41}. Since $\boldsymbol{\alpha}_{3} \in \boldsymbol{U}_3$, we can use Lemma~\ref{l42} to give an asymptotic formula for $S_{12331}$. Again, since $\alpha_4 < \alpha_3$ and $(\alpha_1, \alpha_2, \alpha_3, \alpha_3) \in \boldsymbol{U}_4$, we have $\boldsymbol{\alpha}_{4} \in \boldsymbol{U}_4$ and thus Lemma~\ref{l42} is usable for $S_{12333}$. We discard $S_{12335}$ and the remaining parts of $S_{1233}$, leading to a three-dimensional integral and a five-dimensional integral of loss. Note that we can use the reversed Buchstab's identity to gain a small saving over the remaining parts of $S_{1233}$. The process can be seen as the following:
\begin{align}
\nonumber S_{1233} - S_{1233}^{\prime} =&\ \sum_{\substack{\boldsymbol{\alpha}_{2} \in A \cup B \\ \kappa \leqslant \alpha_3 < \min\left(\alpha_2, \frac{1}{2}(1-\alpha_1 -\alpha_2)\right) \\ \boldsymbol{\alpha}_{3} \notin \boldsymbol{G}_{3} \\ \text{either } \boldsymbol{\alpha}_{3} \notin \boldsymbol{U}_3 \text{ or } (\alpha_1, \alpha_2, \alpha_3, \alpha_3) \notin \boldsymbol{U}_4 }} S\left(\mathcal{A}_{p_1 p_2 p_3}, p_3 \right) \\
\nonumber =&\ \sum_{\substack{\boldsymbol{\alpha}_{2} \in A \cup B \\ \kappa \leqslant \alpha_3 < \min\left(\alpha_2, \frac{1}{2}(1-\alpha_1 -\alpha_2)\right) \\ \boldsymbol{\alpha}_{3} \notin \boldsymbol{G}_{3} \\ \text{either } \boldsymbol{\alpha}_{3} \notin \boldsymbol{U}_3 \text{ or } (\alpha_1, \alpha_2, \alpha_3, \alpha_3) \notin \boldsymbol{U}_4 \\ \alpha_3 \geqslant \frac{1}{2}(1-\alpha_1-\alpha_2-\alpha_3) }} S\left(\mathcal{A}_{p_1 p_2 p_3}, p_3 \right) + \sum_{\substack{\boldsymbol{\alpha}_{2} \in A \cup B \\ \kappa \leqslant \alpha_3 < \min\left(\alpha_2, \frac{1}{2}(1-\alpha_1 -\alpha_2)\right) \\ \boldsymbol{\alpha}_{3} \notin \boldsymbol{G}_{3} \\ \text{either } \boldsymbol{\alpha}_{3} \notin \boldsymbol{U}_3 \text{ or } (\alpha_1, \alpha_2, \alpha_3, \alpha_3) \notin \boldsymbol{U}_4 \\ \alpha_3 < \frac{1}{2}(1-\alpha_1-\alpha_2-\alpha_3) }} S\left(\mathcal{A}_{p_1 p_2 p_3}, p_3 \right) \\
\nonumber =&\ \sum_{\substack{\boldsymbol{\alpha}_{2} \in A \cup B \\ \kappa \leqslant \alpha_3 < \min\left(\alpha_2, \frac{1}{2}(1-\alpha_1 -\alpha_2)\right) \\ \boldsymbol{\alpha}_{3} \notin \boldsymbol{G}_{3} \\ \text{either } \boldsymbol{\alpha}_{3} \notin \boldsymbol{U}_3 \text{ or } (\alpha_1, \alpha_2, \alpha_3, \alpha_3) \notin \boldsymbol{U}_4 \\ \alpha_3 \geqslant \frac{1}{2}(1-\alpha_1-\alpha_2-\alpha_3) }} S\left(\mathcal{A}_{p_1 p_2 p_3}, p_3 \right) + \sum_{\substack{\boldsymbol{\alpha}_{2} \in A \cup B \\ \kappa \leqslant \alpha_3 < \min\left(\alpha_2, \frac{1}{2}(1-\alpha_1 -\alpha_2)\right) \\ \boldsymbol{\alpha}_{3} \notin \boldsymbol{G}_{3} \\ \text{either } \boldsymbol{\alpha}_{3} \notin \boldsymbol{U}_3 \text{ or } (\alpha_1, \alpha_2, \alpha_3, \alpha_3) \notin \boldsymbol{U}_4 \\ \alpha_3 < \frac{1}{2}(1-\alpha_1-\alpha_2-\alpha_3) }} S\left(\mathcal{A}_{p_1 p_2 p_3}, \left(\frac{x}{p_1 p_2 p_3} \right)^{\frac{1}{2}} \right) \\
\nonumber & + \sum_{\substack{\boldsymbol{\alpha}_{2} \in A \cup B \\ \kappa \leqslant \alpha_3 < \min\left(\alpha_2, \frac{1}{2}(1-\alpha_1 -\alpha_2)\right) \\ \boldsymbol{\alpha}_{3} \notin \boldsymbol{G}_{3} \\ \text{either } \boldsymbol{\alpha}_{3} \notin \boldsymbol{U}_3 \text{ or } (\alpha_1, \alpha_2, \alpha_3, \alpha_3) \notin \boldsymbol{U}_4 \\ \alpha_3 < \alpha_4 < \frac{1}{2}(1-\alpha_1-\alpha_2-\alpha_3) \\ \boldsymbol{\alpha}_{4} \in \boldsymbol{G}_{4} }} S\left(\mathcal{A}_{p_1 p_2 p_3 p_4}, p_4 \right) + \sum_{\substack{\boldsymbol{\alpha}_{2} \in A \cup B \\ \kappa \leqslant \alpha_3 < \min\left(\alpha_2, \frac{1}{2}(1-\alpha_1 -\alpha_2)\right) \\ \boldsymbol{\alpha}_{3} \notin \boldsymbol{G}_{3} \\ \text{either } \boldsymbol{\alpha}_{3} \notin \boldsymbol{U}_3 \text{ or } (\alpha_1, \alpha_2, \alpha_3, \alpha_3) \notin \boldsymbol{U}_4 \\ \alpha_3 < \alpha_4 < \frac{1}{2}(1-\alpha_1-\alpha_2-\alpha_3) \\ \boldsymbol{\alpha}_{4} \notin \boldsymbol{G}_{4} \\ \alpha_4 \geqslant \frac{1}{2}(1-\alpha_1-\alpha_2-\alpha_3-\alpha_4) }} S\left(\mathcal{A}_{p_1 p_2 p_3 p_4}, p_4 \right) \\
\nonumber & + \sum_{\substack{\boldsymbol{\alpha}_{2} \in A \cup B \\ \kappa \leqslant \alpha_3 < \min\left(\alpha_2, \frac{1}{2}(1-\alpha_1 -\alpha_2)\right) \\ \boldsymbol{\alpha}_{3} \notin \boldsymbol{G}_{3} \\ \text{either } \boldsymbol{\alpha}_{3} \notin \boldsymbol{U}_3 \text{ or } (\alpha_1, \alpha_2, \alpha_3, \alpha_3) \notin \boldsymbol{U}_4 \\ \alpha_3 < \alpha_4 < \frac{1}{2}(1-\alpha_1-\alpha_2-\alpha_3) \\ \boldsymbol{\alpha}_{4} \notin \boldsymbol{G}_{4} \\ \alpha_4 < \frac{1}{2}(1-\alpha_1-\alpha_2-\alpha_3-\alpha_4) }} S\left(\mathcal{A}_{p_1 p_2 p_3 p_4}, p_4 \right) \\
\nonumber =&\ \sum_{\substack{\boldsymbol{\alpha}_{2} \in A \cup B \\ \kappa \leqslant \alpha_3 < \min\left(\alpha_2, \frac{1}{2}(1-\alpha_1 -\alpha_2)\right) \\ \boldsymbol{\alpha}_{3} \notin \boldsymbol{G}_{3} \\ \text{either } \boldsymbol{\alpha}_{3} \notin \boldsymbol{U}_3 \text{ or } (\alpha_1, \alpha_2, \alpha_3, \alpha_3) \notin \boldsymbol{U}_4 \\ \alpha_3 \geqslant \frac{1}{2}(1-\alpha_1-\alpha_2-\alpha_3) }} S\left(\mathcal{A}_{p_1 p_2 p_3}, p_3 \right) + \sum_{\substack{\boldsymbol{\alpha}_{2} \in A \cup B \\ \kappa \leqslant \alpha_3 < \min\left(\alpha_2, \frac{1}{2}(1-\alpha_1 -\alpha_2)\right) \\ \boldsymbol{\alpha}_{3} \notin \boldsymbol{G}_{3} \\ \text{either } \boldsymbol{\alpha}_{3} \notin \boldsymbol{U}_3 \text{ or } (\alpha_1, \alpha_2, \alpha_3, \alpha_3) \notin \boldsymbol{U}_4 \\ \alpha_3 < \frac{1}{2}(1-\alpha_1-\alpha_2-\alpha_3) }} S\left(\mathcal{A}_{p_1 p_2 p_3}, \left(\frac{x}{p_1 p_2 p_3} \right)^{\frac{1}{2}} \right) \\
\nonumber & + \sum_{\substack{\boldsymbol{\alpha}_{2} \in A \cup B \\ \kappa \leqslant \alpha_3 < \min\left(\alpha_2, \frac{1}{2}(1-\alpha_1 -\alpha_2)\right) \\ \boldsymbol{\alpha}_{3} \notin \boldsymbol{G}_{3} \\ \text{either } \boldsymbol{\alpha}_{3} \notin \boldsymbol{U}_3 \text{ or } (\alpha_1, \alpha_2, \alpha_3, \alpha_3) \notin \boldsymbol{U}_4 \\ \alpha_3 < \alpha_4 < \frac{1}{2}(1-\alpha_1-\alpha_2-\alpha_3) \\ \boldsymbol{\alpha}_{4} \in \boldsymbol{G}_{4} }} S\left(\mathcal{A}_{p_1 p_2 p_3 p_4}, p_4 \right) + \sum_{\substack{\boldsymbol{\alpha}_{2} \in A \cup B \\ \kappa \leqslant \alpha_3 < \min\left(\alpha_2, \frac{1}{2}(1-\alpha_1 -\alpha_2)\right) \\ \boldsymbol{\alpha}_{3} \notin \boldsymbol{G}_{3} \\ \text{either } \boldsymbol{\alpha}_{3} \notin \boldsymbol{U}_3 \text{ or } (\alpha_1, \alpha_2, \alpha_3, \alpha_3) \notin \boldsymbol{U}_4 \\ \alpha_3 < \alpha_4 < \frac{1}{2}(1-\alpha_1-\alpha_2-\alpha_3) \\ \boldsymbol{\alpha}_{4} \notin \boldsymbol{G}_{4} \\ \alpha_4 \geqslant \frac{1}{2}(1-\alpha_1-\alpha_2-\alpha_3-\alpha_4) }} S\left(\mathcal{A}_{p_1 p_2 p_3 p_4}, p_4 \right) \\
\nonumber & + \sum_{\substack{\boldsymbol{\alpha}_{2} \in A \cup B \\ \kappa \leqslant \alpha_3 < \min\left(\alpha_2, \frac{1}{2}(1-\alpha_1 -\alpha_2)\right) \\ \boldsymbol{\alpha}_{3} \notin \boldsymbol{G}_{3} \\ \text{either } \boldsymbol{\alpha}_{3} \notin \boldsymbol{U}_3 \text{ or } (\alpha_1, \alpha_2, \alpha_3, \alpha_3) \notin \boldsymbol{U}_4 \\ \alpha_3 < \alpha_4 < \frac{1}{2}(1-\alpha_1-\alpha_2-\alpha_3) \\ \boldsymbol{\alpha}_{4} \notin \boldsymbol{G}_{4} \\ \alpha_4 < \frac{1}{2}(1-\alpha_1-\alpha_2-\alpha_3-\alpha_4) }} S\left(\mathcal{A}_{p_1 p_2 p_3 p_4}, \left(\frac{x}{p_1 p_2 p_3 p_4} \right)^{\frac{1}{2}} \right) \\
\nonumber & + \sum_{\substack{\boldsymbol{\alpha}_{2} \in A \cup B \\ \kappa \leqslant \alpha_3 < \min\left(\alpha_2, \frac{1}{2}(1-\alpha_1 -\alpha_2)\right) \\ \boldsymbol{\alpha}_{3} \notin \boldsymbol{G}_{3} \\ \text{either } \boldsymbol{\alpha}_{3} \notin \boldsymbol{U}_3 \text{ or } (\alpha_1, \alpha_2, \alpha_3, \alpha_3) \notin \boldsymbol{U}_4 \\ \alpha_3 < \alpha_4 < \frac{1}{2}(1-\alpha_1-\alpha_2-\alpha_3) \\ \boldsymbol{\alpha}_{4} \notin \boldsymbol{G}_{4} \\ \alpha_4 < \alpha_5 < \frac{1}{2}(1-\alpha_1-\alpha_2-\alpha_3-\alpha_4) \\ \boldsymbol{\alpha}_{5} \in \boldsymbol{G}_{5} }} S\left(\mathcal{A}_{p_1 p_2 p_3 p_4 p_5}, p_5 \right) \\
\nonumber & + \sum_{\substack{\boldsymbol{\alpha}_{2} \in A \cup B \\ \kappa \leqslant \alpha_3 < \min\left(\alpha_2, \frac{1}{2}(1-\alpha_1 -\alpha_2)\right) \\ \boldsymbol{\alpha}_{3} \notin \boldsymbol{G}_{3} \\ \text{either } \boldsymbol{\alpha}_{3} \notin \boldsymbol{U}_3 \text{ or } (\alpha_1, \alpha_2, \alpha_3, \alpha_3) \notin \boldsymbol{U}_4 \\ \alpha_3 < \alpha_4 < \frac{1}{2}(1-\alpha_1-\alpha_2-\alpha_3) \\ \boldsymbol{\alpha}_{4} \notin \boldsymbol{G}_{4} \\ \alpha_4 < \alpha_5 < \frac{1}{2}(1-\alpha_1-\alpha_2-\alpha_3-\alpha_4) \\ \boldsymbol{\alpha}_{5} \notin \boldsymbol{G}_{5} }} S\left(\mathcal{A}_{p_1 p_2 p_3 p_4 p_5}, p_5 \right) \\
=&\ S_{1241} + S_{1242} + S_{1243} + S_{1244} + S_{1245} + S_{1246} + S_{1247}.
\end{align}
We can use Lemma~\ref{l41} to deal with $S_{1243}$ and $S_{1246}$. Therefore, we can subtract them from the loss of $S_{1233}$. By (17)--(22), we get the total loss when $\frac{1}{2} \leqslant \theta < \frac{25}{49}$ of
\begin{align}
\nonumber & \left(\int_{(t_1, t_2, t_3) \in L_{101}} \frac{\omega\left(\frac{t_2}{t_1} \right) \omega\left(\frac{1-t_1-t_2-t_3}{t_3} \right)}{t_1^2 t_3^2} d t_3 d t_2 d t_1 \right) \\
\nonumber + & \left(\frac{1}{\kappa} \int_{\kappa}^{\frac{1}{2}} \int_{\kappa}^{\frac{1}{2}} \int_{\kappa}^{\frac{1}{2}} \mathbbm{1}_{(t_1, t_2, t_3) \in \boldsymbol{D}_1 } \frac{\omega\left(\frac{1-t_1-t_2-t_3}{\kappa} \right)}{t_1 t_2 t_3} d t_3 d t_2 d t_1 \right) \\
\nonumber + & \left(\frac{1}{\kappa} \int_{\kappa}^{\frac{1}{2}} \int_{\kappa}^{\frac{1}{2}} \int_{\kappa}^{\frac{1}{2}} \int_{\kappa}^{\frac{1}{2}} \mathbbm{1}_{(t_1, t_2, t_3, t_4) \in \boldsymbol{D}_2 } \frac{\omega\left(\frac{1-t_1-t_2-t_3-t_4}{\kappa} \right)}{t_1 t_2 t_3 t_4} d t_4 d t_3 d t_2 d t_1 \right) \\
\nonumber + & \left(\int_{(t_1, t_2, t_3) \in L_{104}} \frac{\omega\left(\frac{1-t_1-t_2-t_3}{t_3} \right)}{t_1 t_2 t_3^2} d t_3 d t_2 d t_1 \right) \\
\nonumber + & \left(\int_{(t_1, t_2, t_3) \in L_{105}} \frac{\omega\left(\frac{1-t_1-t_2-t_3}{t_3} \right)}{t_1 t_2 t_3^2} d t_3 d t_2 d t_1 \right) \\
\nonumber - & \left(\int_{(t_1, t_2, t_3, t_4) \in L_{106}} \frac{\omega\left(\frac{1-t_1-t_2-t_3-t_4}{t_4} \right)}{t_1 t_2 t_3 t_4^2} d t_4 d t_3 d t_2 d t_1 \right) \\
\nonumber - & \left(\int_{(t_1, t_2, t_3, t_4, t_5) \in L_{107}} \frac{\omega\left(\frac{1-t_1-t_2-t_3-t_4-t_5}{t_5} \right)}{t_1 t_2 t_3 t_4 t_5^2} d t_5 d t_4 d t_3 d t_2 d t_1 \right) \\
\nonumber + & \left(\int_{(t_1, t_2, t_3, t_4, t_5) \in L_{108}} \frac{\omega\left(\frac{1-t_1-t_2-t_3-t_4-t_5}{t_5} \right)}{t_1 t_2 t_3 t_4 t_5^2} d t_5 d t_4 d t_3 d t_2 d t_1 \right) \\
=&\ E_{101}(\theta) + E_{102}(\theta) + E_{103}(\theta) + E_{104}(\theta) + E_{105}(\theta) - E_{106}(\theta) - E_{107}(\theta) + E_{108}(\theta),
\end{align}
where
\begin{align}
\nonumber L_{101}(\boldsymbol{\alpha}_{3}) :=&\ \left\{ \kappa \leqslant \alpha_1 \leqslant \frac{\theta}{2},\ 1 - \theta - \alpha_1 \leqslant \alpha_2 \leqslant \theta - \alpha_1, \right. \\
\nonumber & \left. \quad \kappa \leqslant \alpha_3 \leqslant \min\left(\alpha_2, \frac{1}{2}(1-\alpha_1 -\alpha_2) \right),\ \boldsymbol{\alpha}_{3} \notin \boldsymbol{G}_{3} \right\}, \\
\nonumber L_{104}(\boldsymbol{\alpha}_{3}) :=&\ \left\{ \boldsymbol{\alpha}_{2} \in C,\ \kappa \leqslant \alpha_3 < \min\left(\alpha_2, \frac{1}{2}(1-\alpha_1 -\alpha_2) \right),\ \boldsymbol{\alpha}_{3} \notin \boldsymbol{G}_{3}, \right. \\
\nonumber & \left. \quad \kappa \leqslant \alpha_1 < \frac{1}{2},\ \kappa \leqslant \alpha_2 < \min\left(\alpha_1, \frac{1}{2}(1-\alpha_1) \right) \right\}, \\
\nonumber L_{105}(\boldsymbol{\alpha}_{3}) :=&\ \left\{ \boldsymbol{\alpha}_{2} \in A \cup B,\ \kappa \leqslant \alpha_3 < \min\left(\alpha_2, \frac{1}{2}(1-\alpha_1 -\alpha_2) \right),\ \boldsymbol{\alpha}_{3} \notin \boldsymbol{G}_{3}, \right. \\
\nonumber & \quad \text{either } \boldsymbol{\alpha}_{3} \notin \boldsymbol{U}_3 \text{ or } (\alpha_1, \alpha_2, \alpha_3, \alpha_3) \notin \boldsymbol{U}_4, \\
\nonumber & \left. \quad \kappa \leqslant \alpha_1 < \frac{1}{2},\ \kappa \leqslant \alpha_2 < \min\left(\alpha_1, \frac{1}{2}(1-\alpha_1) \right) \right\}, \\
\nonumber L_{106}(\boldsymbol{\alpha}_{4}) :=&\ \left\{ \boldsymbol{\alpha}_{2} \in A \cup B,\ \kappa \leqslant \alpha_3 < \min\left(\alpha_2, \frac{1}{2}(1-\alpha_1 -\alpha_2) \right),\ \boldsymbol{\alpha}_{3} \notin \boldsymbol{G}_{3}, \right. \\
\nonumber & \quad \text{either } \boldsymbol{\alpha}_{3} \notin \boldsymbol{U}_3 \text{ or } (\alpha_1, \alpha_2, \alpha_3, \alpha_3) \notin \boldsymbol{U}_4, \\
\nonumber & \quad \alpha_3 < \alpha_4 < \frac{1}{2}(1-\alpha_1-\alpha_2-\alpha_3),\ \boldsymbol{\alpha}_{4} \in \boldsymbol{G}_{4}, \\
\nonumber & \left. \quad \kappa \leqslant \alpha_1 < \frac{1}{2},\ \kappa \leqslant \alpha_2 < \min\left(\alpha_1, \frac{1}{2}(1-\alpha_1) \right) \right\}, \\
\nonumber L_{107}(\boldsymbol{\alpha}_{5}) :=&\ \left\{ \boldsymbol{\alpha}_{2} \in A \cup B,\ \kappa \leqslant \alpha_3 < \min\left(\alpha_2, \frac{1}{2}(1-\alpha_1 -\alpha_2) \right),\ \boldsymbol{\alpha}_{3} \notin \boldsymbol{G}_{3}, \right. \\
\nonumber & \quad \text{either } \boldsymbol{\alpha}_{3} \notin \boldsymbol{U}_3 \text{ or } (\alpha_1, \alpha_2, \alpha_3, \alpha_3) \notin \boldsymbol{U}_4, \\
\nonumber & \quad \alpha_3 < \alpha_4 < \frac{1}{2}(1-\alpha_1-\alpha_2-\alpha_3),\ \boldsymbol{\alpha}_{4} \notin \boldsymbol{G}_{4}, \\
\nonumber & \quad \alpha_4 < \alpha_5 < \frac{1}{2}(1-\alpha_1-\alpha_2-\alpha_3-\alpha_4),\ \boldsymbol{\alpha}_{5} \in \boldsymbol{G}_{5}, \\
\nonumber & \left. \quad \kappa \leqslant \alpha_1 < \frac{1}{2},\ \kappa \leqslant \alpha_2 < \min\left(\alpha_1, \frac{1}{2}(1-\alpha_1) \right) \right\}, \\
\nonumber L_{108}(\boldsymbol{\alpha}_{5}) :=&\ \left\{ \boldsymbol{\alpha}_{2} \in A \cup B,\ \kappa \leqslant \alpha_3 < \min\left(\alpha_2, \frac{1}{2}(1-\alpha_1 -\alpha_2) \right),\ \boldsymbol{\alpha}_{3} \notin \boldsymbol{G}_{3}, \right. \\
\nonumber & \quad \boldsymbol{\alpha}_{3} \in \boldsymbol{U}_3,\ (\alpha_1, \alpha_2, \alpha_3, \alpha_3) \in \boldsymbol{U}_4, \\
\nonumber & \quad \kappa \leqslant \alpha_4 < \min\left(\alpha_3, \frac{1}{2}(1-\alpha_1 -\alpha_2-\alpha_3) \right),\ \boldsymbol{\alpha}_{4} \notin \boldsymbol{G}_{4}, \\
\nonumber & \quad \kappa \leqslant \alpha_5 < \min\left(\alpha_4, \frac{1}{2}(1-\alpha_1 -\alpha_2-\alpha_3-\alpha_4) \right),\ \boldsymbol{\alpha}_{5} \notin \boldsymbol{G}_{5}, \\
\nonumber & \left. \quad \kappa \leqslant \alpha_1 < \frac{1}{2},\ \kappa \leqslant \alpha_2 < \min\left(\alpha_1, \frac{1}{2}(1-\alpha_1) \right) \right\}.
\end{align}

Finally, for $\frac{1}{2} \leqslant \theta < \frac{25}{49}$ we have
\begin{equation}
C(\theta) \leqslant C_{1}(\theta) := 1 + \left(E_{101}(\theta) + E_{102}(\theta) + E_{103}(\theta) + E_{104}(\theta) + E_{105}(\theta) - E_{106}(\theta) - E_{107}(\theta) + E_{108}(\theta) \right).
\end{equation}

\begin{center}
\begin{tabular}{|c|c|}
\hline \boldmath{$\theta$} & \boldmath{$C_{1}(\theta)$}\\
\hline $0.501$ & $1.00001$ \\
\hline $0.503$ & $1.00124$ \\
\hline $0.505$ & $1.00633$ \\
\hline $0.508$ & $1.01001$ \\
\hline $0.51$ & $1.015$ \\
\hline $\frac{25}{49}$ & $1.02079$ \\
\hline
\end{tabular}
\end{center}

\subsection{Case 2. $\frac{25}{49} \leqslant \theta < \frac{21}{41}$}
In this subrange, we note that $4 - 7 \theta \leqslant \frac{3}{7}$ and the upper bound for $\alpha_1 \in \boldsymbol{R}$ becomes $4 - 7 \theta$. Since we cannot discard any parts of the sum
\begin{equation}
\sum_{\boldsymbol{\alpha}_{2} \in C } S\left(\mathcal{A}_{p_1 p_2}, p_2 \right),
\end{equation}
we must split the sum
$$
\sum_{\kappa \leqslant \alpha_1 \leqslant \frac{3}{7}} S\left(\mathcal{A}_{p_1}, p_1 \right)
$$
into two sums. Now we start our decomposition by using Buchstab's identity:
\begin{align}
\nonumber S\left(\mathcal{A}, (2x)^{\frac{1}{2}} \right) =&\ S\left(\mathcal{A}, x^{\kappa} \right) - \sum_{\kappa \leqslant \alpha_1 < \frac{1}{2}} S\left(\mathcal{A}_{p_1}, p_1 \right) \\
\nonumber =&\ S\left(\mathcal{A}, x^{\kappa} \right) - \sum_{\kappa \leqslant \alpha_1 < 4 - 7 \theta} S\left(\mathcal{A}_{p_1}, p_1 \right) - \sum_{4 - 7 \theta \leqslant \alpha_1 \leqslant \frac{3}{7}} S\left(\mathcal{A}_{p_1}, p_1 \right) \\
\nonumber & - \sum_{\frac{3}{7} < \alpha_1 < 1 - \theta} S\left(\mathcal{A}_{p_1}, p_1 \right) - \sum_{1 - \theta \leqslant \alpha_1 < \frac{1}{2}} S\left(\mathcal{A}_{p_1}, p_1 \right) \\
=&\ S_{21} - S_{22} - S_{23} - S_{24} - S_{25}.
\end{align}
Just as $S_{11}$ and $S_{13}$ in \textbf{Case 1}, we can give asymptotic formulas for $S_{21}$ by Lemma~\ref{l43} and for $S_{24}$ by Lemma~\ref{l41}. For $S_{25}$, we can use Lemma~\ref{l51} to give an upper bound similar to that for $S_{14}$. We can use the same process as in the estimation $S_{12}$ to show a similar upper bound for $S_{22}$, with a loss consisting of 7 integrals ($I_1$, $I_2$, 3D for $\boldsymbol{\alpha}_{2} \in C$, and 3D$^{+}$, 4D$^{-}$, 5D$^{-}$ and 5D$^{+}$ for $\boldsymbol{\alpha}_{2} \in A \cup B$). Now we only need to deal with the remaining $S_{23}$. In this sum Lemma~\ref{l52} is not applicable, but we can use Lemma~\ref{l53} to deal with some parts satisfying
$$
\max\left(\frac{19 \theta - 7}{7},\ \frac{50 \theta - 19}{17} \right) \leqslant \alpha_1 \leqslant \frac{3}{7}, \quad \text{or} \quad \frac{19 \theta - 7}{7} \leqslant \alpha_1 \leqslant \frac{3}{7} \quad \text{in this case}.
$$
That is, we use Buchstab's identity on $S_{23}$ twice again, giving the following expression.
\begin{align}
\nonumber S_{23} =&\ \sum_{4 - 7 \theta \leqslant \alpha_1 \leqslant \frac{3}{7}} S\left(\mathcal{A}_{p_1}, p_1 \right) \\
\nonumber =&\ \sum_{\substack{4 - 7 \theta \leqslant \alpha_1 \leqslant \frac{3}{7} \\ \theta \notin \left[\frac{19 \theta - 7}{7}, \frac{3}{7}\right]}} S\left(\mathcal{A}_{p_1}, p_1 \right) + \sum_{\substack{4 - 7 \theta \leqslant \alpha_1 \leqslant \frac{3}{7} \\ \theta \in \left[\frac{19 \theta - 7}{7}, \frac{3}{7}\right]}} S\left(\mathcal{A}_{p_1}, p_1 \right) \\
\nonumber =&\ \sum_{\substack{4 - 7 \theta \leqslant \alpha_1 \leqslant \frac{3}{7} \\ \theta \notin \left[\frac{19 \theta - 7}{7}, \frac{3}{7}\right]}} S\left(\mathcal{A}_{p_1}, p_1 \right) + \sum_{\substack{4 - 7 \theta \leqslant \alpha_1 \leqslant \frac{3}{7} \\ \theta \in \left[\frac{19 \theta - 7}{7}, \frac{3}{7}\right]}} S\left(\mathcal{A}_{p_1}, x^{\kappa} \right) \\ 
\nonumber & - \sum_{\substack{4 - 7 \theta \leqslant \alpha_1 \leqslant \frac{3}{7} \\ \theta \in \left[\frac{19 \theta - 7}{7}, \frac{3}{7}\right] \\ \boldsymbol{\alpha}_{2} \in A \cup B }} S\left(\mathcal{A}_{p_1 p_2}, x^{\kappa} \right) - \sum_{\substack{4 - 7 \theta \leqslant \alpha_1 \leqslant \frac{3}{7} \\ \theta \in \left[\frac{19 \theta - 7}{7}, \frac{3}{7}\right] \\ \boldsymbol{\alpha}_{2} \in C }} S\left(\mathcal{A}_{p_1 p_2}, x^{\kappa} \right) \\
\nonumber & + \sum_{\substack{4 - 7 \theta \leqslant \alpha_1 \leqslant \frac{3}{7} \\ \theta \in \left[\frac{19 \theta - 7}{7}, \frac{3}{7}\right] \\ \boldsymbol{\alpha}_{2} \in A \cup B \cup C \\ \kappa \leqslant \alpha_3 < \min\left(\alpha_2, \frac{1}{2}(1-\alpha_1 -\alpha_2)\right) \\ \boldsymbol{\alpha}_{3} \in \boldsymbol{G}_{3} }} S\left(\mathcal{A}_{p_1 p_2 p_3}, p_3 \right) + \sum_{\substack{4 - 7 \theta \leqslant \alpha_1 \leqslant \frac{3}{7} \\ \theta \in \left[\frac{19 \theta - 7}{7}, \frac{3}{7}\right] \\ \boldsymbol{\alpha}_{2} \in A \cup B \cup C \\ \kappa \leqslant \alpha_3 < \min\left(\alpha_2, \frac{1}{2}(1-\alpha_1 -\alpha_2)\right) \\ \boldsymbol{\alpha}_{3} \notin \boldsymbol{G}_{3} }} S\left(\mathcal{A}_{p_1 p_2 p_3}, p_3 \right) \\
=&\ S_{231} + S_{232} - S_{233} - S_{234} + S_{235} + S_{236}.
\end{align}
We give asymptotic formulas for $S_{232}$ (Lemma~\ref{l43}), $S_{233}$ (Lemma~\ref{l44}) and $S_{235}$ (Lemma~\ref{l41}). After checking the conditions we find that $\boldsymbol{R}_0$ covers parts of $C$ satisfy $\alpha_1 \in \left[\frac{19 \theta - 7}{7}, \frac{3}{7}\right]$. We use Lemma~\ref{l53} to give an upper bound for $S_{234}$ and discard the whole of $S_{231}$ and $S_{236}$, leading to 4 loss integrals ($S_{231} \to$ 1D for $4 - 7 \theta \leqslant \alpha_1 < \frac{19 \theta - 7}{7}$, $S_{234} \to$ $I_3$ and $I_4$, and $S_{236} \to$ 3D for $\frac{19 \theta - 7}{7} \leqslant \alpha_1 \leqslant \frac{3}{7}$). Together, the total loss when $\frac{25}{49} \leqslant \theta < \frac{21}{41}$ can be bounded by
\begin{align}
\nonumber & \left(\int_{(t_1, t_2, t_3) \in L_{201}} \frac{\omega\left(\frac{t_2}{t_1} \right) \omega\left(\frac{1-t_1-t_2-t_3}{t_3} \right)}{t_1^2 t_3^2} d t_3 d t_2 d t_1 \right) \\
\nonumber + & \left(\frac{1}{\kappa} \int_{\kappa}^{\frac{1}{2}} \int_{\kappa}^{\frac{1}{2}} \int_{\kappa}^{\frac{1}{2}} \mathbbm{1}_{(t_1, t_2, t_3) \in \boldsymbol{D}_1 } \frac{\omega\left(\frac{1-t_1-t_2-t_3}{\kappa} \right)}{t_1 t_2 t_3} d t_3 d t_2 d t_1 \right) \\
\nonumber + & \left(\frac{1}{\kappa} \int_{\kappa}^{\frac{1}{2}} \int_{\kappa}^{\frac{1}{2}} \int_{\kappa}^{\frac{1}{2}} \int_{\kappa}^{\frac{1}{2}} \mathbbm{1}_{(t_1, t_2, t_3, t_4) \in \boldsymbol{D}_2 } \frac{\omega\left(\frac{1-t_1-t_2-t_3-t_4}{\kappa} \right)}{t_1 t_2 t_3 t_4} d t_4 d t_3 d t_2 d t_1 \right) \\
\nonumber + & \left(\int_{(t_1, t_2, t_3) \in L_{204}} \frac{\omega\left(\frac{1-t_1-t_2-t_3}{t_3} \right)}{t_1 t_2 t_3^2} d t_3 d t_2 d t_1 \right) \\
\nonumber + & \left(\int_{(t_1, t_2, t_3) \in L_{205}} \frac{\omega\left(\frac{1-t_1-t_2-t_3}{t_3} \right)}{t_1 t_2 t_3^2} d t_3 d t_2 d t_1 \right) \\
\nonumber - & \left(\int_{(t_1, t_2, t_3, t_4) \in L_{206}} \frac{\omega\left(\frac{1-t_1-t_2-t_3-t_4}{t_4} \right)}{t_1 t_2 t_3 t_4^2} d t_4 d t_3 d t_2 d t_1 \right) \\
\nonumber - & \left(\int_{(t_1, t_2, t_3, t_4, t_5) \in L_{207}} \frac{\omega\left(\frac{1-t_1-t_2-t_3-t_4-t_5}{t_5} \right)}{t_1 t_2 t_3 t_4 t_5^2} d t_5 d t_4 d t_3 d t_2 d t_1 \right) \\
\nonumber + & \left(\int_{(t_1, t_2, t_3, t_4, t_5) \in L_{208}} \frac{\omega\left(\frac{1-t_1-t_2-t_3-t_4-t_5}{t_5} \right)}{t_1 t_2 t_3 t_4 t_5^2} d t_5 d t_4 d t_3 d t_2 d t_1 \right) \\
\nonumber + & \left(\int_{(t_1) \in L_{209}} \frac{\omega\left(\frac{1-t_1}{t_1} \right)}{t_1^2} d t_1 \right) \\
\nonumber + & \left(\frac{1}{\kappa} \int_{\kappa}^{\frac{1}{2}} \int_{\kappa}^{\frac{1}{2}} \int_{\kappa}^{\frac{1}{2}} \mathbbm{1}_{(t_1, t_2, t_3) \in \boldsymbol{D}_3 } \frac{\omega\left(\frac{1-t_1-t_2-t_3}{\kappa} \right)}{t_1 t_2 t_3} d t_3 d t_2 d t_1 \right) \\
\nonumber + & \left(\frac{1}{\kappa} \int_{\kappa}^{\frac{1}{2}} \int_{\kappa}^{\frac{1}{2}} \int_{\kappa}^{\frac{1}{2}} \int_{\kappa}^{\frac{1}{2}} \mathbbm{1}_{(t_1, t_2, t_3, t_4) \in \boldsymbol{D}_4 } \frac{\omega\left(\frac{1-t_1-t_2-t_3-t_4}{\kappa} \right)}{t_1 t_2 t_3 t_4} d t_4 d t_3 d t_2 d t_1 \right) \\
\nonumber + & \left(\int_{(t_1, t_2, t_3) \in L_{212}} \frac{\omega\left(\frac{1-t_1-t_2-t_3}{t_3} \right)}{t_1 t_2 t_3^2} d t_3 d t_2 d t_1 \right) \\
\nonumber =&\ E_{201}(\theta) + E_{202}(\theta) + E_{203}(\theta) + E_{204}(\theta) + E_{205}(\theta) - E_{206}(\theta) - E_{207}(\theta) \\ 
& + E_{208}(\theta) + E_{209}(\theta) + E_{210}(\theta) + E_{211}(\theta) + E_{212}(\theta),
\end{align}
where
\begin{align}
\nonumber L_{201}(\boldsymbol{\alpha}_{3}) :=&\ \left\{ \kappa \leqslant \alpha_1 \leqslant \frac{\theta}{2},\ 1 - \theta - \alpha_1 \leqslant \alpha_2 \leqslant \theta - \alpha_1, \right. \\
\nonumber & \left. \quad \kappa \leqslant \alpha_3 \leqslant \min\left(\alpha_2, \frac{1}{2}(1-\alpha_1 -\alpha_2) \right),\ \boldsymbol{\alpha}_{3} \notin \boldsymbol{G}_{3} \right\}, \\
\nonumber L_{204}(\boldsymbol{\alpha}_{3}) :=&\ \left\{ \alpha_1 < 4 - 7 \theta,\ \boldsymbol{\alpha}_{2} \in C,\ \kappa \leqslant \alpha_3 < \min\left(\alpha_2, \frac{1}{2}(1-\alpha_1 -\alpha_2) \right),\ \boldsymbol{\alpha}_{3} \notin \boldsymbol{G}_{3}, \right. \\
\nonumber & \left. \quad \kappa \leqslant \alpha_1 < \frac{1}{2},\ \kappa \leqslant \alpha_2 < \min\left(\alpha_1, \frac{1}{2}(1-\alpha_1) \right) \right\}, \\
\nonumber L_{205}(\boldsymbol{\alpha}_{3}) :=&\ \left\{\alpha_1 < 4 - 7 \theta,\ \boldsymbol{\alpha}_{2} \in A \cup B,\ \kappa \leqslant \alpha_3 < \min\left(\alpha_2, \frac{1}{2}(1-\alpha_1 -\alpha_2) \right),\ \boldsymbol{\alpha}_{3} \notin \boldsymbol{G}_{3}, \right. \\
\nonumber & \quad \text{either } \boldsymbol{\alpha}_{3} \notin \boldsymbol{U}_3 \text{ or } (\alpha_1, \alpha_2, \alpha_3, \alpha_3) \notin \boldsymbol{U}_4, \\
\nonumber & \left. \quad \kappa \leqslant \alpha_1 < \frac{1}{2},\ \kappa \leqslant \alpha_2 < \min\left(\alpha_1, \frac{1}{2}(1-\alpha_1) \right) \right\}, \\
\nonumber L_{206}(\boldsymbol{\alpha}_{4}) :=&\ \left\{\alpha_1 < 4 - 7 \theta,\ \boldsymbol{\alpha}_{2} \in A \cup B,\ \kappa \leqslant \alpha_3 < \min\left(\alpha_2, \frac{1}{2}(1-\alpha_1 -\alpha_2) \right),\ \boldsymbol{\alpha}_{3} \notin \boldsymbol{G}_{3}, \right. \\
\nonumber & \quad \text{either } \boldsymbol{\alpha}_{3} \notin \boldsymbol{U}_3 \text{ or } (\alpha_1, \alpha_2, \alpha_3, \alpha_3) \notin \boldsymbol{U}_4, \\
\nonumber & \quad \alpha_3 < \alpha_4 < \frac{1}{2}(1-\alpha_1-\alpha_2-\alpha_3),\ \boldsymbol{\alpha}_{4} \in \boldsymbol{G}_{4}, \\
\nonumber & \left. \quad \kappa \leqslant \alpha_1 < \frac{1}{2},\ \kappa \leqslant \alpha_2 < \min\left(\alpha_1, \frac{1}{2}(1-\alpha_1) \right) \right\}, \\
\nonumber L_{207}(\boldsymbol{\alpha}_{5}) :=&\ \left\{\alpha_1 < 4 - 7 \theta,\ \boldsymbol{\alpha}_{2} \in A \cup B,\ \kappa \leqslant \alpha_3 < \min\left(\alpha_2, \frac{1}{2}(1-\alpha_1 -\alpha_2) \right),\ \boldsymbol{\alpha}_{3} \notin \boldsymbol{G}_{3}, \right. \\
\nonumber & \quad \text{either } \boldsymbol{\alpha}_{3} \notin \boldsymbol{U}_3 \text{ or } (\alpha_1, \alpha_2, \alpha_3, \alpha_3) \notin \boldsymbol{U}_4, \\
\nonumber & \quad \alpha_3 < \alpha_4 < \frac{1}{2}(1-\alpha_1-\alpha_2-\alpha_3),\ \boldsymbol{\alpha}_{4} \notin \boldsymbol{G}_{4}, \\
\nonumber & \quad \alpha_4 < \alpha_5 < \frac{1}{2}(1-\alpha_1-\alpha_2-\alpha_3-\alpha_4),\ \boldsymbol{\alpha}_{5} \in \boldsymbol{G}_{5}, \\
\nonumber & \left. \quad \kappa \leqslant \alpha_1 < \frac{1}{2},\ \kappa \leqslant \alpha_2 < \min\left(\alpha_1, \frac{1}{2}(1-\alpha_1) \right) \right\}, \\
\nonumber L_{208}(\boldsymbol{\alpha}_{5}) :=&\ \left\{\alpha_1 < 4 - 7 \theta,\ \boldsymbol{\alpha}_{2} \in A \cup B,\ \kappa \leqslant \alpha_3 < \min\left(\alpha_2, \frac{1}{2}(1-\alpha_1 -\alpha_2) \right),\ \boldsymbol{\alpha}_{3} \notin \boldsymbol{G}_{3}, \right. \\
\nonumber & \quad \boldsymbol{\alpha}_{3} \in \boldsymbol{U}_3,\ (\alpha_1, \alpha_2, \alpha_3, \alpha_3) \in \boldsymbol{U}_4, \\
\nonumber & \quad \kappa \leqslant \alpha_4 < \min\left(\alpha_3, \frac{1}{2}(1-\alpha_1 -\alpha_2-\alpha_3) \right),\ \boldsymbol{\alpha}_{4} \notin \boldsymbol{G}_{4}, \\
\nonumber & \quad \kappa \leqslant \alpha_5 < \min\left(\alpha_4, \frac{1}{2}(1-\alpha_1 -\alpha_2-\alpha_3-\alpha_4) \right),\ \boldsymbol{\alpha}_{5} \notin \boldsymbol{G}_{5}, \\
\nonumber & \left. \quad \kappa \leqslant \alpha_1 < \frac{1}{2},\ \kappa \leqslant \alpha_2 < \min\left(\alpha_1, \frac{1}{2}(1-\alpha_1) \right) \right\}, \\
\nonumber L_{209}(\alpha_1) :=&\ \left\{ 4 - 7 \theta \leqslant \alpha_1 \leqslant \frac{3}{7},\ \alpha_1 \notin \left[\frac{19 \theta - 7}{7},\ \frac{3}{7}\right] \right\}, \\
\nonumber L_{212}(\boldsymbol{\alpha}_{3}) :=&\ \left\{  4 - 7 \theta \leqslant \alpha_1 \leqslant \frac{3}{7},\ \alpha_1 \in \left[\frac{19 \theta - 7}{7},\ \frac{3}{7}\right],\ \boldsymbol{\alpha}_{2} \notin \boldsymbol{G}_{2}, \right. \\
\nonumber & \quad \kappa \leqslant \alpha_3 < \min\left(\alpha_2, \frac{1}{2}(1-\alpha_1 -\alpha_2) \right),\ \boldsymbol{\alpha}_{3} \notin \boldsymbol{G}_{3}, \\
\nonumber & \left. \quad \kappa \leqslant \alpha_1 < \frac{1}{2},\ \kappa \leqslant \alpha_2 < \min\left(\alpha_1, \frac{1}{2}(1-\alpha_1) \right) \right\}.
\end{align}

Finally, for $\frac{25}{49} \leqslant \theta < \frac{21}{41}$ we have
\begin{align}
\nonumber C(\theta) \leqslant C_{2}(\theta) :=&\ 1 + \left(E_{201}(\theta) + E_{202}(\theta) + E_{203}(\theta) + E_{204}(\theta) + E_{205}(\theta) - E_{206}(\theta) \right. \\
& \left. \qquad - E_{207}(\theta) + E_{208}(\theta) + E_{209}(\theta) + E_{210}(\theta) + E_{211}(\theta) + E_{212}(\theta) \right).
\end{align}

\begin{center}
\begin{tabular}{|c|c|c|c|c|c|c|c|}
\hline \boldmath{$\theta$} & \boldmath{$C_{2}(\theta)$} & \boldmath{$\theta$} & \boldmath{$C_{2}(\theta)$} & \boldmath{$\theta$} & \boldmath{$C_{2}(\theta)$} & \boldmath{$\theta$} & \boldmath{$C_{2}(\theta)$} \\
\hline $0.5103$ & $1.02214$ & $0.5109$ & $1.02349$ & $0.5114$ & $1.02441$ & $0.5119$ & $1.02757$ \\
\hline $0.5104$ & $1.02236$ & $0.5111$ & $1.02365$ & $0.5115$ & $1.02623$ & $0.512$ & $1.03146$ \\
\hline $0.5105$ & $1.0227$ & $0.5112$ & $1.02385$ & $0.51155$ & $1.02639$ & $0.5121$ & $1.0315$ \\
\hline $0.5107$ & $1.02297$ & $0.5113$ & $1.02414$ & $0.5116$ & $1.02656$ & $\frac{21}{41}$ & $1.03169$ \\
\hline
\end{tabular}
\end{center}

\subsection{Case 3. $\frac{21}{41} \leqslant \theta < \frac{16}{31}$}
In this subrange, we have $\frac{31 \theta - 15}{3} \geqslant \tau$, and the lower bound for $\alpha_1 \in \boldsymbol{R}$ becomes $\frac{31 \theta - 15}{3}$. Because the lines $\alpha_1 = \alpha_2$ and $\alpha_1 + 4 \alpha_2 = 3 - 3 \theta$ intersect in $(\alpha_1, \alpha_2) = (\tau, \tau)$, we have $\boldsymbol{\alpha}_{2} \in A \cup B$ if $\alpha_2 < \alpha_1 < \tau$. Thus, we start our decomposition by using Buchstab's identity and split the second sum in the following way:
\begin{align}
\nonumber S\left(\mathcal{A}, (2x)^{\frac{1}{2}} \right) =&\ S\left(\mathcal{A}, x^{\kappa} \right) - \sum_{\kappa \leqslant \alpha_1 < \frac{1}{2}} S\left(\mathcal{A}_{p_1}, p_1 \right) \\
\nonumber =&\ S\left(\mathcal{A}, x^{\kappa} \right) - \sum_{\alpha_1 \in \left[\kappa, \tau \right) \cup \left(\frac{31 \theta - 15}{3}, 4 - 7 \theta \right)} S\left(\mathcal{A}_{p_1}, p_1 \right) - \sum_{\alpha_1 \in \left[\tau, \frac{31 \theta - 15}{3} \right] \cup \left[4 - 7 \theta, \frac{3}{7} \right]} S\left(\mathcal{A}_{p_1}, p_1 \right) \\
\nonumber & - \sum_{\frac{3}{7} < \alpha_1 < 1 - \theta} S\left(\mathcal{A}_{p_1}, p_1 \right) - \sum_{1 - \theta \leqslant \alpha_1 < \frac{1}{2}} S\left(\mathcal{A}_{p_1}, p_1 \right) \\
=&\ S_{31} - S_{32} - S_{33} - S_{34} - S_{35}.
\end{align}
By a decomposing process similar to our decomposition in \textbf{Case 2}, we can get 7 loss integrals for $S_{22}$, 4 loss integrals for $S_{23}$ and a loss integral for $S_{25}$. Thus, the total loss when $\frac{21}{41} \leqslant \theta < \frac{16}{31}$ can be bounded by
\begin{align}
\nonumber & \left(\int_{(t_1, t_2, t_3) \in L_{301}} \frac{\omega\left(\frac{t_2}{t_1} \right) \omega\left(\frac{1-t_1-t_2-t_3}{t_3} \right)}{t_1^2 t_3^2} d t_3 d t_2 d t_1 \right) \\
\nonumber + & \left(\frac{1}{\kappa} \int_{\kappa}^{\frac{1}{2}} \int_{\kappa}^{\frac{1}{2}} \int_{\kappa}^{\frac{1}{2}} \mathbbm{1}_{(t_1, t_2, t_3) \in \boldsymbol{D}_1 } \frac{\omega\left(\frac{1-t_1-t_2-t_3}{\kappa} \right)}{t_1 t_2 t_3} d t_3 d t_2 d t_1 \right) \\
\nonumber + & \left(\frac{1}{\kappa} \int_{\kappa}^{\frac{1}{2}} \int_{\kappa}^{\frac{1}{2}} \int_{\kappa}^{\frac{1}{2}} \int_{\kappa}^{\frac{1}{2}} \mathbbm{1}_{(t_1, t_2, t_3, t_4) \in \boldsymbol{D}_2 } \frac{\omega\left(\frac{1-t_1-t_2-t_3-t_4}{\kappa} \right)}{t_1 t_2 t_3 t_4} d t_4 d t_3 d t_2 d t_1 \right) \\
\nonumber + & \left(\int_{(t_1, t_2, t_3) \in L_{304}} \frac{\omega\left(\frac{1-t_1-t_2-t_3}{t_3} \right)}{t_1 t_2 t_3^2} d t_3 d t_2 d t_1 \right) \\
\nonumber + & \left(\int_{(t_1, t_2, t_3) \in L_{305}} \frac{\omega\left(\frac{1-t_1-t_2-t_3}{t_3} \right)}{t_1 t_2 t_3^2} d t_3 d t_2 d t_1 \right) \\
\nonumber - & \left(\int_{(t_1, t_2, t_3, t_4) \in L_{306}} \frac{\omega\left(\frac{1-t_1-t_2-t_3-t_4}{t_4} \right)}{t_1 t_2 t_3 t_4^2} d t_4 d t_3 d t_2 d t_1 \right) \\
\nonumber - & \left(\int_{(t_1, t_2, t_3, t_4, t_5) \in L_{307}} \frac{\omega\left(\frac{1-t_1-t_2-t_3-t_4-t_5}{t_5} \right)}{t_1 t_2 t_3 t_4 t_5^2} d t_5 d t_4 d t_3 d t_2 d t_1 \right) \\
\nonumber + & \left(\int_{(t_1, t_2, t_3, t_4, t_5) \in L_{308}} \frac{\omega\left(\frac{1-t_1-t_2-t_3-t_4-t_5}{t_5} \right)}{t_1 t_2 t_3 t_4 t_5^2} d t_5 d t_4 d t_3 d t_2 d t_1 \right) \\
\nonumber + & \left(\int_{(t_1) \in L_{309}} \frac{\omega\left(\frac{1-t_1}{t_1} \right)}{t_1^2} d t_1 \right) \\
\nonumber + & \left(\frac{1}{\kappa} \int_{\kappa}^{\frac{1}{2}} \int_{\kappa}^{\frac{1}{2}} \int_{\kappa}^{\frac{1}{2}} \mathbbm{1}_{(t_1, t_2, t_3) \in \boldsymbol{D}_3 } \frac{\omega\left(\frac{1-t_1-t_2-t_3}{\kappa} \right)}{t_1 t_2 t_3} d t_3 d t_2 d t_1 \right) \\
\nonumber + & \left(\frac{1}{\kappa} \int_{\kappa}^{\frac{1}{2}} \int_{\kappa}^{\frac{1}{2}} \int_{\kappa}^{\frac{1}{2}} \int_{\kappa}^{\frac{1}{2}} \mathbbm{1}_{(t_1, t_2, t_3, t_4) \in \boldsymbol{D}_4 } \frac{\omega\left(\frac{1-t_1-t_2-t_3-t_4}{\kappa} \right)}{t_1 t_2 t_3 t_4} d t_4 d t_3 d t_2 d t_1 \right) \\
\nonumber + & \left(\int_{(t_1, t_2, t_3) \in L_{312}} \frac{\omega\left(\frac{1-t_1-t_2-t_3}{t_3} \right)}{t_1 t_2 t_3^2} d t_3 d t_2 d t_1 \right) \\
\nonumber =&\ E_{301}(\theta) + E_{302}(\theta) + E_{303}(\theta) + E_{304}(\theta) + E_{305}(\theta) - E_{306}(\theta) - E_{307}(\theta) \\ 
& + E_{308}(\theta) + E_{309}(\theta) + E_{310}(\theta) + E_{311}(\theta) + E_{312}(\theta),
\end{align}
where
\begin{align}
\nonumber L_{301}(\boldsymbol{\alpha}_{3}) :=&\ \left\{ \kappa \leqslant \alpha_1 \leqslant \frac{\theta}{2},\ 1 - \theta - \alpha_1 \leqslant \alpha_2 \leqslant \theta - \alpha_1, \right. \\
\nonumber & \left. \quad \kappa \leqslant \alpha_3 \leqslant \min\left(\alpha_2, \frac{1}{2}(1-\alpha_1 -\alpha_2) \right),\ \boldsymbol{\alpha}_{3} \notin \boldsymbol{G}_{3} \right\}, \\
\nonumber L_{304}(\boldsymbol{\alpha}_{3}) :=&\ \left\{ \alpha_1 \in \left[\kappa,\ \tau \right) \cup \left(\frac{31 \theta - 15}{3},\ 4 - 7 \theta \right),\ \boldsymbol{\alpha}_{2} \in C, \right. \\
\nonumber & \quad \kappa \leqslant \alpha_3 < \min\left(\alpha_2, \frac{1}{2}(1-\alpha_1 -\alpha_2) \right),\ \boldsymbol{\alpha}_{3} \notin \boldsymbol{G}_{3}, \\
\nonumber & \left. \quad \kappa \leqslant \alpha_1 < \frac{1}{2},\ \kappa \leqslant \alpha_2 < \min\left(\alpha_1, \frac{1}{2}(1-\alpha_1) \right) \right\}, \\
\nonumber L_{305}(\boldsymbol{\alpha}_{3}) :=&\ \left\{\alpha_1 \in \left[\kappa,\ \tau \right) \cup \left(\frac{31 \theta - 15}{3},\ 4 - 7 \theta \right),\ \boldsymbol{\alpha}_{2} \in A \cup B, \right. \\
\nonumber & \quad \kappa \leqslant \alpha_3 < \min\left(\alpha_2, \frac{1}{2}(1-\alpha_1 -\alpha_2) \right),\ \boldsymbol{\alpha}_{3} \notin \boldsymbol{G}_{3}, \\
\nonumber & \quad \text{either } \boldsymbol{\alpha}_{3} \notin \boldsymbol{U}_3 \text{ or } (\alpha_1, \alpha_2, \alpha_3, \alpha_3) \notin \boldsymbol{U}_4, \\
\nonumber & \left. \quad \kappa \leqslant \alpha_1 < \frac{1}{2},\ \kappa \leqslant \alpha_2 < \min\left(\alpha_1, \frac{1}{2}(1-\alpha_1) \right) \right\}, \\
\nonumber L_{306}(\boldsymbol{\alpha}_{4}) :=&\ \left\{\alpha_1 \in \left[\kappa,\ \tau \right) \cup \left(\frac{31 \theta - 15}{3},\ 4 - 7 \theta \right),\ \boldsymbol{\alpha}_{2} \in A \cup B, \right. \\
\nonumber & \quad \kappa \leqslant \alpha_3 < \min\left(\alpha_2, \frac{1}{2}(1-\alpha_1 -\alpha_2) \right),\ \boldsymbol{\alpha}_{3} \notin \boldsymbol{G}_{3}, \\
\nonumber & \quad \text{either } \boldsymbol{\alpha}_{3} \notin \boldsymbol{U}_3 \text{ or } (\alpha_1, \alpha_2, \alpha_3, \alpha_3) \notin \boldsymbol{U}_4, \\
\nonumber & \quad \alpha_3 < \alpha_4 < \frac{1}{2}(1-\alpha_1-\alpha_2-\alpha_3),\ \boldsymbol{\alpha}_{4} \in \boldsymbol{G}_{4}, \\
\nonumber & \left. \quad \kappa \leqslant \alpha_1 < \frac{1}{2},\ \kappa \leqslant \alpha_2 < \min\left(\alpha_1, \frac{1}{2}(1-\alpha_1) \right) \right\}, \\
\nonumber L_{307}(\boldsymbol{\alpha}_{5}) :=&\ \left\{\alpha_1 \in \left[\kappa,\ \tau \right) \cup \left(\frac{31 \theta - 15}{3},\ 4 - 7 \theta \right),\ \boldsymbol{\alpha}_{2} \in A \cup B, \right. \\
\nonumber & \quad \kappa \leqslant \alpha_3 < \min\left(\alpha_2, \frac{1}{2}(1-\alpha_1 -\alpha_2) \right),\ \boldsymbol{\alpha}_{3} \notin \boldsymbol{G}_{3}, \\
\nonumber & \quad \text{either } \boldsymbol{\alpha}_{3} \notin \boldsymbol{U}_3 \text{ or } (\alpha_1, \alpha_2, \alpha_3, \alpha_3) \notin \boldsymbol{U}_4, \\
\nonumber & \quad \alpha_3 < \alpha_4 < \frac{1}{2}(1-\alpha_1-\alpha_2-\alpha_3),\ \boldsymbol{\alpha}_{4} \notin \boldsymbol{G}_{4}, \\
\nonumber & \quad \alpha_4 < \alpha_5 < \frac{1}{2}(1-\alpha_1-\alpha_2-\alpha_3-\alpha_4),\ \boldsymbol{\alpha}_{5} \in \boldsymbol{G}_{5}, \\
\nonumber & \left. \quad \kappa \leqslant \alpha_1 < \frac{1}{2},\ \kappa \leqslant \alpha_2 < \min\left(\alpha_1, \frac{1}{2}(1-\alpha_1) \right) \right\}, \\
\nonumber L_{308}(\boldsymbol{\alpha}_{5}) :=&\ \left\{\alpha_1 \in \left[\kappa,\ \tau \right) \cup \left(\frac{31 \theta - 15}{3},\ 4 - 7 \theta \right),\ \boldsymbol{\alpha}_{2} \in A \cup B, \right. \\
\nonumber & \quad \kappa \leqslant \alpha_3 < \min\left(\alpha_2, \frac{1}{2}(1-\alpha_1 -\alpha_2) \right),\ \boldsymbol{\alpha}_{3} \notin \boldsymbol{G}_{3}, \\
\nonumber & \quad \boldsymbol{\alpha}_{3} \in \boldsymbol{U}_3,\ (\alpha_1, \alpha_2, \alpha_3, \alpha_3) \in \boldsymbol{U}_4, \\
\nonumber & \quad \kappa \leqslant \alpha_4 < \min\left(\alpha_3, \frac{1}{2}(1-\alpha_1 -\alpha_2-\alpha_3) \right),\ \boldsymbol{\alpha}_{4} \notin \boldsymbol{G}_{4}, \\
\nonumber & \quad \kappa \leqslant \alpha_5 < \min\left(\alpha_4, \frac{1}{2}(1-\alpha_1 -\alpha_2-\alpha_3-\alpha_4) \right),\ \boldsymbol{\alpha}_{5} \notin \boldsymbol{G}_{5}, \\
\nonumber & \left. \quad \kappa \leqslant \alpha_1 < \frac{1}{2},\ \kappa \leqslant \alpha_2 < \min\left(\alpha_1, \frac{1}{2}(1-\alpha_1) \right) \right\}, \\
\nonumber L_{309}(\alpha_1) :=&\ \left\{ \alpha_1 \in \left[\tau,\ \frac{31 \theta - 15}{3} \right] \cup \left[4 - 7 \theta,\ \frac{3}{7} \right],\ \alpha_1 \notin \left[\frac{19 \theta - 7}{7},\ \frac{3}{7}\right] \right\}, \\
\nonumber L_{312}(\boldsymbol{\alpha}_{3}) :=&\ \left\{ \alpha_1 \in \left[\tau,\ \frac{31 \theta - 15}{3} \right] \cup \left[4 - 7 \theta,\ \frac{3}{7} \right],\ \alpha_1 \in \left[\frac{19 \theta - 7}{7},\ \frac{3}{7}\right],\ \boldsymbol{\alpha}_{2} \notin \boldsymbol{G}_{2}, \right. \\
\nonumber & \quad \kappa \leqslant \alpha_3 < \min\left(\alpha_2, \frac{1}{2}(1-\alpha_1 -\alpha_2) \right),\ \boldsymbol{\alpha}_{3} \notin \boldsymbol{G}_{3}, \\
\nonumber & \left. \quad \kappa \leqslant \alpha_1 < \frac{1}{2},\ \kappa \leqslant \alpha_2 < \min\left(\alpha_1, \frac{1}{2}(1-\alpha_1) \right) \right\}.
\end{align}

Finally, for $\frac{21}{41} \leqslant \theta < \frac{16}{31}$ we have
\begin{align}
\nonumber C(\theta) \leqslant C_{3}(\theta) :=&\ 1 + \left(E_{301}(\theta) + E_{302}(\theta) + E_{303}(\theta) + E_{304}(\theta) + E_{305}(\theta) - E_{306}(\theta) \right. \\
& \left. \qquad - E_{307}(\theta) + E_{308}(\theta) + E_{309}(\theta) + E_{310}(\theta) + E_{311}(\theta) + E_{312}(\theta) \right).
\end{align}

\begin{center}
\begin{tabular}{|c|c|c|c|c|c|}
\hline \boldmath{$\theta$} & \boldmath{$C_{3}(\theta)$} & \boldmath{$\theta$} & \boldmath{$C_{3}(\theta)$} & \boldmath{$\theta$} & \boldmath{$C_{3}(\theta)$} \\
\hline $0.5124$ & $1.03763$ & $0.5132$ & $1.09167$ & $0.5152$ & $1.23885$ \\
\hline $0.5126$ & $1.05089$ & $0.5135$ & $1.11171$ & $0.5155$ & $1.2675$ \\
\hline $0.5127$ & $1.05778$ & $0.514$ & $1.14303$ & $0.516$ & $1.31988$ \\
\hline $0.5128$ & $1.0652$ & $0.5145$ & $1.17429$ & $0.5161$ & $1.32981$ \\
\hline $0.513$ & $1.07887$ & $0.515$ & $1.21983$ & $\frac{16}{31}$ & $1.3326$ \\
\hline
\end{tabular}
\end{center}

\subsection{Case 4. $\frac{16}{31} \leqslant \theta < \frac{14}{27}$}
From here we note that Lemma~\ref{l52} is not applicable. Buchstab's identity yields
\begin{align}
\nonumber S\left(\mathcal{A}, (2x)^{\frac{1}{2}} \right) =&\ S\left(\mathcal{A}, x^{\kappa} \right) - \sum_{\kappa \leqslant \alpha_1 < \frac{1}{2}} S\left(\mathcal{A}_{p_1}, p_1 \right) \\
\nonumber =&\ S\left(\mathcal{A}, x^{\kappa} \right) - \sum_{\kappa \leqslant \alpha_1 < \tau} S\left(\mathcal{A}_{p_1}, p_1 \right) - \sum_{\tau \leqslant \alpha_1 \leqslant \frac{3}{7}} S\left(\mathcal{A}_{p_1}, p_1 \right) \\
\nonumber & - \sum_{\frac{3}{7} < \alpha_1 < 1 - \theta} S\left(\mathcal{A}_{p_1}, p_1 \right) - \sum_{1 - \theta \leqslant \alpha_1 < \frac{1}{2}} S\left(\mathcal{A}_{p_1}, p_1 \right) \\
=&\ S_{41} - S_{42} - S_{43} - S_{44} - S_{45}.
\end{align}
Now, we can only use Lemma~\ref{l53} to deal with parts of $S_{43}$ satisfy $\alpha_1 \in \left[\frac{19 \theta - 7}{7}, \frac{3}{7}\right]$, and discard the remaining parts. Combining with the loss integrals from $S_{42}$ and $S_{45}$, the total loss when $\frac{16}{31} \leqslant \theta < \frac{14}{27}$ is no more than
\begin{align}
\nonumber & \left(\int_{(t_1, t_2, t_3) \in L_{401}} \frac{\omega\left(\frac{t_2}{t_1} \right) \omega\left(\frac{1-t_1-t_2-t_3}{t_3} \right)}{t_1^2 t_3^2} d t_3 d t_2 d t_1 \right) \\
\nonumber + & \left(\int_{(t_1, t_2, t_3) \in L_{402}} \frac{\omega\left(\frac{1-t_1-t_2-t_3}{t_3} \right)}{t_1 t_2 t_3^2} d t_3 d t_2 d t_1 \right) \\
\nonumber - & \left(\int_{(t_1, t_2, t_3, t_4) \in L_{403}} \frac{\omega\left(\frac{1-t_1-t_2-t_3-t_4}{t_4} \right)}{t_1 t_2 t_3 t_4^2} d t_4 d t_3 d t_2 d t_1 \right) \\
\nonumber - & \left(\int_{(t_1, t_2, t_3, t_4, t_5) \in L_{404}} \frac{\omega\left(\frac{1-t_1-t_2-t_3-t_4-t_5}{t_5} \right)}{t_1 t_2 t_3 t_4 t_5^2} d t_5 d t_4 d t_3 d t_2 d t_1 \right) \\
\nonumber + & \left(\int_{(t_1, t_2, t_3, t_4, t_5) \in L_{405}} \frac{\omega\left(\frac{1-t_1-t_2-t_3-t_4-t_5}{t_5} \right)}{t_1 t_2 t_3 t_4 t_5^2} d t_5 d t_4 d t_3 d t_2 d t_1 \right) \\
\nonumber + & \left(\int_{(t_1) \in L_{406}} \frac{\omega\left(\frac{1-t_1}{t_1} \right)}{t_1^2} d t_1 \right) \\
\nonumber + & \left(\frac{1}{\kappa} \int_{\kappa}^{\frac{1}{2}} \int_{\kappa}^{\frac{1}{2}} \int_{\kappa}^{\frac{1}{2}} \mathbbm{1}_{(t_1, t_2, t_3) \in \boldsymbol{D}_3 } \frac{\omega\left(\frac{1-t_1-t_2-t_3}{\kappa} \right)}{t_1 t_2 t_3} d t_3 d t_2 d t_1 \right) \\
\nonumber + & \left(\frac{1}{\kappa} \int_{\kappa}^{\frac{1}{2}} \int_{\kappa}^{\frac{1}{2}} \int_{\kappa}^{\frac{1}{2}} \int_{\kappa}^{\frac{1}{2}} \mathbbm{1}_{(t_1, t_2, t_3, t_4) \in \boldsymbol{D}_4 } \frac{\omega\left(\frac{1-t_1-t_2-t_3-t_4}{\kappa} \right)}{t_1 t_2 t_3 t_4} d t_4 d t_3 d t_2 d t_1 \right) \\
\nonumber + & \left(\int_{(t_1, t_2, t_3) \in L_{409}} \frac{\omega\left(\frac{1-t_1-t_2-t_3}{t_3} \right)}{t_1 t_2 t_3^2} d t_3 d t_2 d t_1 \right) \\
=&\ E_{401}(\theta) + E_{402}(\theta) - E_{403}(\theta) - E_{404}(\theta) + E_{405}(\theta) + E_{406}(\theta) + E_{407}(\theta) + E_{408}(\theta) + E_{409}(\theta),
\end{align}
where
\begin{align}
\nonumber L_{401}(\boldsymbol{\alpha}_{3}) :=&\ \left\{ \kappa \leqslant \alpha_1 \leqslant \frac{\theta}{2},\ 1 - \theta - \alpha_1 \leqslant \alpha_2 \leqslant \theta - \alpha_1, \right. \\
\nonumber & \left. \quad \kappa \leqslant \alpha_3 \leqslant \min\left(\alpha_2, \frac{1}{2}(1-\alpha_1 -\alpha_2) \right),\ \boldsymbol{\alpha}_{3} \notin \boldsymbol{G}_{3} \right\}, \\
\nonumber L_{402}(\boldsymbol{\alpha}_{3}) :=&\ \left\{ \kappa \leqslant \alpha_1 < \tau,\ \kappa \leqslant \alpha_3 < \min\left(\alpha_2, \frac{1}{2}(1-\alpha_1 -\alpha_2) \right),\ \boldsymbol{\alpha}_{3} \notin \boldsymbol{G}_{3}, \right. \\
\nonumber & \quad \text{either } \boldsymbol{\alpha}_{3} \notin \boldsymbol{U}_3 \text{ or } (\alpha_1, \alpha_2, \alpha_3, \alpha_3) \notin \boldsymbol{U}_4, \\
\nonumber & \left. \quad \kappa \leqslant \alpha_1 < \frac{1}{2},\ \kappa \leqslant \alpha_2 < \min\left(\alpha_1, \frac{1}{2}(1-\alpha_1) \right) \right\}, \\
\nonumber L_{403}(\boldsymbol{\alpha}_{4}) :=&\ \left\{\kappa \leqslant \alpha_1 < \tau,\ \kappa \leqslant \alpha_3 < \min\left(\alpha_2, \frac{1}{2}(1-\alpha_1 -\alpha_2) \right),\ \boldsymbol{\alpha}_{3} \notin \boldsymbol{G}_{3}, \right. \\
\nonumber & \quad \text{either } \boldsymbol{\alpha}_{3} \notin \boldsymbol{U}_3 \text{ or } (\alpha_1, \alpha_2, \alpha_3, \alpha_3) \notin \boldsymbol{U}_4, \\
\nonumber & \quad \alpha_3 < \alpha_4 < \frac{1}{2}(1-\alpha_1-\alpha_2-\alpha_3),\ \boldsymbol{\alpha}_{4} \in \boldsymbol{G}_{4}, \\
\nonumber & \left. \quad \kappa \leqslant \alpha_1 < \frac{1}{2},\ \kappa \leqslant \alpha_2 < \min\left(\alpha_1, \frac{1}{2}(1-\alpha_1) \right) \right\}, \\
\nonumber L_{404}(\boldsymbol{\alpha}_{5}) :=&\ \left\{\kappa \leqslant \alpha_1 < \tau,\ \kappa \leqslant \alpha_3 < \min\left(\alpha_2, \frac{1}{2}(1-\alpha_1 -\alpha_2) \right),\ \boldsymbol{\alpha}_{3} \notin \boldsymbol{G}_{3}, \right. \\
\nonumber & \quad \text{either } \boldsymbol{\alpha}_{3} \notin \boldsymbol{U}_3 \text{ or } (\alpha_1, \alpha_2, \alpha_3, \alpha_3) \notin \boldsymbol{U}_4, \\
\nonumber & \quad \alpha_3 < \alpha_4 < \frac{1}{2}(1-\alpha_1-\alpha_2-\alpha_3),\ \boldsymbol{\alpha}_{4} \notin \boldsymbol{G}_{4}, \\
\nonumber & \quad \alpha_4 < \alpha_5 < \frac{1}{2}(1-\alpha_1-\alpha_2-\alpha_3-\alpha_4),\ \boldsymbol{\alpha}_{5} \in \boldsymbol{G}_{5}, \\
\nonumber & \left. \quad \kappa \leqslant \alpha_1 < \frac{1}{2},\ \kappa \leqslant \alpha_2 < \min\left(\alpha_1, \frac{1}{2}(1-\alpha_1) \right) \right\}, \\
\nonumber L_{405}(\boldsymbol{\alpha}_{5}) :=&\ \left\{\kappa \leqslant \alpha_1 < \tau,\ \kappa \leqslant \alpha_3 < \min\left(\alpha_2, \frac{1}{2}(1-\alpha_1 -\alpha_2) \right),\ \boldsymbol{\alpha}_{3} \notin \boldsymbol{G}_{3}, \right. \\
\nonumber & \quad \boldsymbol{\alpha}_{3} \in \boldsymbol{U}_3,\ (\alpha_1, \alpha_2, \alpha_3, \alpha_3) \in \boldsymbol{U}_4, \\
\nonumber & \quad \kappa \leqslant \alpha_4 < \min\left(\alpha_3, \frac{1}{2}(1-\alpha_1 -\alpha_2-\alpha_3) \right),\ \boldsymbol{\alpha}_{4} \notin \boldsymbol{G}_{4}, \\
\nonumber & \quad \kappa \leqslant \alpha_5 < \min\left(\alpha_4, \frac{1}{2}(1-\alpha_1 -\alpha_2-\alpha_3-\alpha_4) \right),\ \boldsymbol{\alpha}_{5} \notin \boldsymbol{G}_{5}, \\
\nonumber & \left. \quad \kappa \leqslant \alpha_1 < \frac{1}{2},\ \kappa \leqslant \alpha_2 < \min\left(\alpha_1, \frac{1}{2}(1-\alpha_1) \right) \right\}, \\
\nonumber L_{406}(\alpha_1) :=&\ \left\{ \tau \leqslant \alpha_1 \leqslant \frac{3}{7},\ \alpha_1 \notin \left[\frac{19 \theta - 7}{7},\ \frac{3}{7}\right] \right\}, \\
\nonumber L_{409}(\boldsymbol{\alpha}_{3}) :=&\ \left\{ \tau \leqslant \alpha_1 \leqslant \frac{3}{7},\ \alpha_1 \in \left[\frac{19 \theta - 7}{7},\ \frac{3}{7}\right],\ \boldsymbol{\alpha}_{2} \notin \boldsymbol{G}_{2}, \right. \\
\nonumber & \quad \kappa \leqslant \alpha_3 < \min\left(\alpha_2, \frac{1}{2}(1-\alpha_1 -\alpha_2) \right),\ \boldsymbol{\alpha}_{3} \notin \boldsymbol{G}_{3}, \\
\nonumber & \left. \quad \kappa \leqslant \alpha_1 < \frac{1}{2},\ \kappa \leqslant \alpha_2 < \min\left(\alpha_1, \frac{1}{2}(1-\alpha_1) \right) \right\}.
\end{align}

Finally, for $\frac{16}{31} \leqslant \theta < \frac{14}{27}$ we have
\begin{equation}
C(\theta) \leqslant C_{4}(\theta) := 1 + \left(E_{401}(\theta) + E_{402}(\theta) - E_{403}(\theta) - E_{404}(\theta) + E_{405}(\theta) + E_{406}(\theta) + E_{407}(\theta) + E_{408}(\theta) + E_{409}(\theta) \right).
\end{equation}

\begin{center}
\begin{tabular}{|c|c|c|c|c|c|}
\hline \boldmath{$\theta$} & \boldmath{$C_{4}(\theta)$} & \boldmath{$\theta$} & \boldmath{$C_{4}(\theta)$} & \boldmath{$\theta$} & \boldmath{$C_{4}(\theta)$} \\
\hline $0.5163$ & $1.55978$ & $0.5168$ & $1.56977$ & $0.5178$ & $1.59006$ \\
\hline $0.5164$ & $1.56181$ & $0.5169$ & $1.5718$ & $0.518$ & $1.59476$ \\
\hline $0.5165$ & $1.56386$ & $0.517$ & $1.57396$ & $0.5182$ & $1.59927$ \\
\hline $0.5166$ & $1.56604$ & $0.5172$ & $1.5775$ & $0.5185$ & $1.60315$ \\
\hline $0.5167$ & $1.56778$ & $0.5175$ & $1.58362$ & $\frac{14}{27}$ & $1.60358$ \\
\hline
\end{tabular}
\end{center}

\subsection{Case 5. $\frac{14}{27} \leqslant \theta < \frac{11}{21}$}
In this subrange, we have $\frac{50 \theta - 19}{17} \geqslant \frac{19 \theta - 7}{7}$, and the lower bound for $\alpha_1 \in \boldsymbol{R}_{0}$ becomes $\frac{50 \theta - 19}{17}$. By an exactly same process as the decomposition in \textbf{Case 4}, the total loss when $\frac{14}{27} \leqslant \theta < \frac{11}{21}$ can be bounded by
\begin{align}
\nonumber & \left(\int_{(t_1, t_2, t_3) \in L_{501}} \frac{\omega\left(\frac{t_2}{t_1} \right) \omega\left(\frac{1-t_1-t_2-t_3}{t_3} \right)}{t_1^2 t_3^2} d t_3 d t_2 d t_1 \right) \\
\nonumber + & \left(\int_{(t_1, t_2, t_3) \in L_{502}} \frac{\omega\left(\frac{1-t_1-t_2-t_3}{t_3} \right)}{t_1 t_2 t_3^2} d t_3 d t_2 d t_1 \right) \\
\nonumber - & \left(\int_{(t_1, t_2, t_3, t_4) \in L_{503}} \frac{\omega\left(\frac{1-t_1-t_2-t_3-t_4}{t_4} \right)}{t_1 t_2 t_3 t_4^2} d t_4 d t_3 d t_2 d t_1 \right) \\
\nonumber - & \left(\int_{(t_1, t_2, t_3, t_4, t_5) \in L_{504}} \frac{\omega\left(\frac{1-t_1-t_2-t_3-t_4-t_5}{t_5} \right)}{t_1 t_2 t_3 t_4 t_5^2} d t_5 d t_4 d t_3 d t_2 d t_1 \right) \\
\nonumber + & \left(\int_{(t_1, t_2, t_3, t_4, t_5) \in L_{505}} \frac{\omega\left(\frac{1-t_1-t_2-t_3-t_4-t_5}{t_5} \right)}{t_1 t_2 t_3 t_4 t_5^2} d t_5 d t_4 d t_3 d t_2 d t_1 \right) \\
\nonumber + & \left(\int_{(t_1) \in L_{506}} \frac{\omega\left(\frac{1-t_1}{t_1} \right)}{t_1^2} d t_1 \right) \\
\nonumber + & \left(\frac{1}{\kappa} \int_{\kappa}^{\frac{1}{2}} \int_{\kappa}^{\frac{1}{2}} \int_{\kappa}^{\frac{1}{2}} \mathbbm{1}_{(t_1, t_2, t_3) \in \boldsymbol{D}_3 } \frac{\omega\left(\frac{1-t_1-t_2-t_3}{\kappa} \right)}{t_1 t_2 t_3} d t_3 d t_2 d t_1 \right) \\
\nonumber + & \left(\frac{1}{\kappa} \int_{\kappa}^{\frac{1}{2}} \int_{\kappa}^{\frac{1}{2}} \int_{\kappa}^{\frac{1}{2}} \int_{\kappa}^{\frac{1}{2}} \mathbbm{1}_{(t_1, t_2, t_3, t_4) \in \boldsymbol{D}_4 } \frac{\omega\left(\frac{1-t_1-t_2-t_3-t_4}{\kappa} \right)}{t_1 t_2 t_3 t_4} d t_4 d t_3 d t_2 d t_1 \right) \\
\nonumber + & \left(\int_{(t_1, t_2, t_3) \in L_{509}} \frac{\omega\left(\frac{1-t_1-t_2-t_3}{t_3} \right)}{t_1 t_2 t_3^2} d t_3 d t_2 d t_1 \right) \\
=&\ E_{501}(\theta) + E_{502}(\theta) - E_{503}(\theta) - E_{504}(\theta) + E_{505}(\theta) + E_{506}(\theta) + E_{507}(\theta) + E_{508}(\theta) + E_{509}(\theta),
\end{align}
where
\begin{align}
\nonumber L_{501}(\boldsymbol{\alpha}_{3}) :=&\ \left\{ \kappa \leqslant \alpha_1 \leqslant \frac{\theta}{2},\ 1 - \theta - \alpha_1 \leqslant \alpha_2 \leqslant \theta - \alpha_1, \right. \\
\nonumber & \left. \quad \kappa \leqslant \alpha_3 \leqslant \min\left(\alpha_2, \frac{1}{2}(1-\alpha_1 -\alpha_2) \right),\ \boldsymbol{\alpha}_{3} \notin \boldsymbol{G}_{3} \right\}, \\
\nonumber L_{502}(\boldsymbol{\alpha}_{3}) :=&\ \left\{ \kappa \leqslant \alpha_1 < \tau,\ \kappa \leqslant \alpha_3 < \min\left(\alpha_2, \frac{1}{2}(1-\alpha_1 -\alpha_2) \right),\ \boldsymbol{\alpha}_{3} \notin \boldsymbol{G}_{3}, \right. \\
\nonumber & \quad \text{either } \boldsymbol{\alpha}_{3} \notin \boldsymbol{U}_3 \text{ or } (\alpha_1, \alpha_2, \alpha_3, \alpha_3) \notin \boldsymbol{U}_4, \\
\nonumber & \left. \quad \kappa \leqslant \alpha_1 < \frac{1}{2},\ \kappa \leqslant \alpha_2 < \min\left(\alpha_1, \frac{1}{2}(1-\alpha_1) \right) \right\}, \\
\nonumber L_{503}(\boldsymbol{\alpha}_{4}) :=&\ \left\{\kappa \leqslant \alpha_1 < \tau,\ \kappa \leqslant \alpha_3 < \min\left(\alpha_2, \frac{1}{2}(1-\alpha_1 -\alpha_2) \right),\ \boldsymbol{\alpha}_{3} \notin \boldsymbol{G}_{3}, \right. \\
\nonumber & \quad \text{either } \boldsymbol{\alpha}_{3} \notin \boldsymbol{U}_3 \text{ or } (\alpha_1, \alpha_2, \alpha_3, \alpha_3) \notin \boldsymbol{U}_4, \\
\nonumber & \quad \alpha_3 < \alpha_4 < \frac{1}{2}(1-\alpha_1-\alpha_2-\alpha_3),\ \boldsymbol{\alpha}_{4} \in \boldsymbol{G}_{4}, \\
\nonumber & \left. \quad \kappa \leqslant \alpha_1 < \frac{1}{2},\ \kappa \leqslant \alpha_2 < \min\left(\alpha_1, \frac{1}{2}(1-\alpha_1) \right) \right\}, \\
\nonumber L_{504}(\boldsymbol{\alpha}_{5}) :=&\ \left\{\kappa \leqslant \alpha_1 < \tau,\ \kappa \leqslant \alpha_3 < \min\left(\alpha_2, \frac{1}{2}(1-\alpha_1 -\alpha_2) \right),\ \boldsymbol{\alpha}_{3} \notin \boldsymbol{G}_{3}, \right. \\
\nonumber & \quad \text{either } \boldsymbol{\alpha}_{3} \notin \boldsymbol{U}_3 \text{ or } (\alpha_1, \alpha_2, \alpha_3, \alpha_3) \notin \boldsymbol{U}_4, \\
\nonumber & \quad \alpha_3 < \alpha_4 < \frac{1}{2}(1-\alpha_1-\alpha_2-\alpha_3),\ \boldsymbol{\alpha}_{4} \notin \boldsymbol{G}_{4}, \\
\nonumber & \quad \alpha_4 < \alpha_5 < \frac{1}{2}(1-\alpha_1-\alpha_2-\alpha_3-\alpha_4),\ \boldsymbol{\alpha}_{5} \in \boldsymbol{G}_{5}, \\
\nonumber & \left. \quad \kappa \leqslant \alpha_1 < \frac{1}{2},\ \kappa \leqslant \alpha_2 < \min\left(\alpha_1, \frac{1}{2}(1-\alpha_1) \right) \right\}, \\
\nonumber L_{505}(\boldsymbol{\alpha}_{5}) :=&\ \left\{\kappa \leqslant \alpha_1 < \tau,\ \kappa \leqslant \alpha_3 < \min\left(\alpha_2, \frac{1}{2}(1-\alpha_1 -\alpha_2) \right),\ \boldsymbol{\alpha}_{3} \notin \boldsymbol{G}_{3}, \right. \\
\nonumber & \quad \boldsymbol{\alpha}_{3} \in \boldsymbol{U}_3,\ (\alpha_1, \alpha_2, \alpha_3, \alpha_3) \in \boldsymbol{U}_4, \\
\nonumber & \quad \kappa \leqslant \alpha_4 < \min\left(\alpha_3, \frac{1}{2}(1-\alpha_1 -\alpha_2-\alpha_3) \right),\ \boldsymbol{\alpha}_{4} \notin \boldsymbol{G}_{4}, \\
\nonumber & \quad \kappa \leqslant \alpha_5 < \min\left(\alpha_4, \frac{1}{2}(1-\alpha_1 -\alpha_2-\alpha_3-\alpha_4) \right),\ \boldsymbol{\alpha}_{5} \notin \boldsymbol{G}_{5}, \\
\nonumber & \left. \quad \kappa \leqslant \alpha_1 < \frac{1}{2},\ \kappa \leqslant \alpha_2 < \min\left(\alpha_1, \frac{1}{2}(1-\alpha_1) \right) \right\}, \\
\nonumber L_{506}(\alpha_1) :=&\ \left\{ \tau \leqslant \alpha_1 \leqslant \frac{3}{7},\ \alpha_1 \notin \left[\frac{50 \theta - 19}{17},\ \frac{3}{7}\right] \right\}, \\
\nonumber L_{509}(\boldsymbol{\alpha}_{3}) :=&\ \left\{ \tau \leqslant \alpha_1 \leqslant \frac{3}{7},\ \alpha_1 \in \left[\frac{50 \theta - 19}{17},\ \frac{3}{7}\right],\ \boldsymbol{\alpha}_{2} \notin \boldsymbol{G}_{2}, \right. \\
\nonumber & \quad \kappa \leqslant \alpha_3 < \min\left(\alpha_2, \frac{1}{2}(1-\alpha_1 -\alpha_2) \right),\ \boldsymbol{\alpha}_{3} \notin \boldsymbol{G}_{3}, \\
\nonumber & \left. \quad \kappa \leqslant \alpha_1 < \frac{1}{2},\ \kappa \leqslant \alpha_2 < \min\left(\alpha_1, \frac{1}{2}(1-\alpha_1) \right) \right\}.
\end{align}

Finally, for $\frac{14}{27} \leqslant \theta < \frac{11}{21}$ we have
\begin{equation}
C(\theta) \leqslant C_{5}(\theta) := 1 + \left(E_{501}(\theta) + E_{502}(\theta) - E_{503}(\theta) - E_{504}(\theta) + E_{505}(\theta) + E_{506}(\theta) + E_{507}(\theta) + E_{508}(\theta) + E_{509}(\theta) \right).
\end{equation}
We remark that Harman [\cite{HarmanBOOK}, Page 188] mentioned that $C(0.52) < 1.632$, and we decide to use this value since our $C_{5}(0.52)$ is larger than $1.632$.

\begin{center}
\begin{tabular}{|c|c|c|c|c|c|c|c|}
\hline \boldmath{$\theta$} & \boldmath{$C_{5}(\theta)$} & \boldmath{$\theta$} & \boldmath{$C_{5}(\theta)$} & \boldmath{$\theta$} & \boldmath{$C_{5}(\theta)$} & \boldmath{$\theta$} & \boldmath{$C_{5}(\theta)$} \\
\hline $0.5186$ & $1.60737$ & $0.52$ & $1.632$ & $0.5215$ & $1.67861$ & $0.5232$ & $1.71925$ \\
\hline $0.5188$ & $1.60886$ & $0.5205$ & $1.64649$ & $0.522$ & $1.68723$ & $0.5234$ & $1.72482$ \\
\hline $0.519$ & $1.61355$ & $0.5208$ & $1.65321$ & $0.5225$ & $1.70046$ & $0.5236$ & $1.7305$ \\
\hline $0.5195$ & $1.62485$ & $0.5211$ & $1.66246$ & $0.523$ & $1.71379$ & $0.5237$ & $1.7337$ \\
\hline
\end{tabular}
\end{center}

\subsection{Case 6. $\frac{11}{21} \leqslant \theta < \frac{92}{175}$} 
From here, Lemma~\ref{l44} is not applicable. However, we do not need to worry about that. We note that for $\theta \geqslant \frac{11}{21}$, we must have $\boldsymbol{\alpha}_{2} \in A$ if $\alpha_2 < \alpha_1 < \tau = \frac{2}{7}$. If $\theta \leqslant \alpha_1 + \alpha_2 \leqslant 2 \tau = \frac{4}{7}$, we have $\frac{3}{7} \leqslant 1 - \alpha_1 - \alpha_2 < 1 - \theta$ and $\boldsymbol{\alpha}_{2} \in \boldsymbol{G}_{2}$. Otherwise we have $\alpha_1 + \alpha_2 < \theta$ ($\alpha_2 < \alpha_1 < \frac{\theta}{2}$), which means that $(\alpha_1, \alpha_2, 2\theta-1) \in \boldsymbol{S}_{3}$ (one can easily check the corresponding conditions) and $\boldsymbol{\alpha}_{2} \in \boldsymbol{U}_{2}^{\prime}$ (or $\boldsymbol{U}_{2}$ when $\theta < \frac{7}{13}$). Thus, we can use Lemma~\ref{l41} or Lemma~\ref{l42} to deal with the sum
\begin{equation}
\sum_{\kappa \leqslant \alpha_2 < \alpha_1 < \tau} S\left(\mathcal{A}_{p_1 p_2}, x^{\kappa} \right).
\end{equation}
The decomposing process of the remaining sums are similar to those in \textbf{Case 5}.

For the sake of simplicity, we choose to use the simple bound
\begin{equation}
C(\theta) \leqslant 1.725 + 13.125(\theta - 0.523)
\end{equation}
for $\frac{11}{21} \leqslant \theta < \frac{92}{175}$ given by Harman [\cite{HarmanBOOK}, Page 186]. Note that $C(\theta)$ is monotonic increasing, we have
\begin{equation}
C(\theta) \leqslant C\left(\frac{11}{21}\right) \leqslant 1.725 + 13.125\left(\frac{11}{21} - 0.523\right)
\end{equation}
for $0.5237 \leqslant \theta < \frac{11}{21}$. We also use the bound $C(0.525) < 1.75$ (mentioned in [\cite{HarmanBOOK}, Page 188]), which is slightly better than using (39) directly.

\subsection{Case 7. $\frac{92}{175} \leqslant \theta < \frac{17}{32}$}
From here, Lemma~\ref{l53} is not applicable and we must discard the whole of
\begin{equation}
\sum_{\tau \leqslant \alpha_1 < \frac{3}{7}} S\left(\mathcal{A}_{p_1}, p_1 \right).
\end{equation}

For the sake of simplicity, we shall also use the simple bound
\begin{equation}
C(\theta) \leqslant 1.725 + 13.125(\theta - 0.523)
\end{equation}
for $\frac{92}{175} \leqslant \theta < \frac{17}{32}$ given by Harman [\cite{HarmanBOOK}, Page 186].

\subsection{Case 8. $\frac{17}{32} \leqslant \theta < \frac{7}{13}$}
From here, Lemma~\ref{l43} and Lemma~\ref{l51} are not applicable. Buchstab's identity yields
\begin{align}
\nonumber S\left(\mathcal{A}, (2x)^{\frac{1}{2}} \right) =&\ S\left(\mathcal{A}, x^{\kappa} \right) - \sum_{\kappa \leqslant \alpha_1 < \frac{1}{2}} S\left(\mathcal{A}_{p_1}, p_1 \right) \\
\nonumber =&\ S\left(\mathcal{A}, x^{\kappa} \right) - \sum_{\kappa \leqslant \alpha_1 < \tau} S\left(\mathcal{A}_{p_1}, p_1 \right) - \sum_{\tau \leqslant \alpha_1 \leqslant \frac{3}{7}} S\left(\mathcal{A}_{p_1}, p_1 \right) \\
\nonumber & - \sum_{\frac{3}{7} < \alpha_1 < 1 - \theta} S\left(\mathcal{A}_{p_1}, p_1 \right) - \sum_{1 - \theta \leqslant \alpha_1 < \frac{1}{2}} S\left(\mathcal{A}_{p_1}, p_1 \right) \\
=&\ S_{81} - S_{82} - S_{83} - S_{84} - S_{85}.
\end{align}
$S_{81}$ and $S_{84}$ have asymptotic formulas. Since we cannot estimate the corresponding two-dimensional sums, we must discard the whole of $S_{83}$ and $S_{85}$. We can perform Buchstab's identity twice more on $S_{82}$ to get
\begin{align}
\nonumber S_{82} =&\ \sum_{\kappa \leqslant \alpha_1 < \tau} S\left(\mathcal{A}_{p_1}, p_1 \right) \\
\nonumber =&\ \sum_{\kappa \leqslant \alpha_1 < \tau} S\left(\mathcal{A}_{p_1}, x^{\kappa} \right) - \sum_{\substack{\kappa \leqslant \alpha_2 < \alpha_1 < \tau \\ \boldsymbol{\alpha}_{2} \in \boldsymbol{G}_{2} }} S\left(\mathcal{A}_{p_1 p_2}, p_2 \right) - \sum_{\substack{\kappa \leqslant \alpha_2 < \alpha_1 < \tau \\ \boldsymbol{\alpha}_{2} \notin \boldsymbol{G}_{2} }} S\left(\mathcal{A}_{p_1 p_2}, x^{\kappa} \right) \\
\nonumber & + \sum_{\substack{\kappa \leqslant \alpha_2 < \alpha_1 < \tau \\ \boldsymbol{\alpha}_{2} \notin \boldsymbol{G}_{2} \\ \kappa \leqslant \alpha_3 < \min\left(\alpha_2, \frac{1}{2}(1-\alpha_1 -\alpha_2)\right) \\ \boldsymbol{\alpha}_{3} \in \boldsymbol{G}_{3} }} S\left(\mathcal{A}_{p_1 p_2 p_3}, p_3 \right) + \sum_{\substack{\kappa \leqslant \alpha_2 < \alpha_1 < \tau \\ \boldsymbol{\alpha}_{2} \notin \boldsymbol{G}_{2} \\ \kappa \leqslant \alpha_3 < \min\left(\alpha_2, \frac{1}{2}(1-\alpha_1 -\alpha_2)\right) \\ \boldsymbol{\alpha}_{3} \notin \boldsymbol{G}_{3} }} S\left(\mathcal{A}_{p_1 p_2 p_3}, p_3 \right) \\
=&\ S_{821} - S_{822} - S_{823} + S_{824} + S_{825}.
\end{align}
$S_{822}$ and $S_{824}$ have asymptotic formulas. We can use Lemma~\ref{l42} to give an asymptotic formula for $S_{823}$ (and also $S_{821}$) by the discussion in \textbf{Case 6}. For $S_{825}$ we can check the same conditions as those in \textbf{Case 1}: if $\boldsymbol{\alpha}_{3} \in \boldsymbol{U}_3$ and
$$
(\alpha_1, \alpha_2, \alpha_3, \alpha_3) \in \boldsymbol{U}_4,
$$
for parts of $S_{825}$, then we can use Buchstab's identity twice again to get a five-dimensional sum. We can even get a seven-dimensional sum if we have $\boldsymbol{\alpha}_{5} \in \boldsymbol{U}_5$ and
$$
(\alpha_1, \alpha_2, \alpha_3, \alpha_4, \alpha_5, \alpha_5) \in \boldsymbol{U}_6.
$$
For the sums without further decompositions, we use the reversed Buchstab's identity to gain savings from almost-primes counted. Note that the halved value of $\kappa$ in this subrange of $\theta$ doubles the maximum number of prime factors of the elements counted in these sums. Summing up those resulted integrals, the total loss when $\frac{17}{32} \leqslant \theta < \frac{7}{13}$ is no more than
\begin{align}
\nonumber & \left(\int_{1-\theta}^{\frac{1}{2}} \frac{\omega\left(\frac{1-t_1}{t_1} \right)}{t_1^2} d t_1 \right) \\
\nonumber + & \left(\int_{(t_1, t_2, t_3) \in L_{802}} \frac{\omega\left(\frac{1-t_1-t_2-t_3}{t_3} \right)}{t_1 t_2 t_3^2} d t_3 d t_2 d t_1 \right) \\
\nonumber - & \left(\int_{(t_1, t_2, t_3, t_4) \in L_{803}} \frac{\omega\left(\frac{1-t_1-t_2-t_3-t_4}{t_4} \right)}{t_1 t_2 t_3 t_4^2} d t_4 d t_3 d t_2 d t_1 \right) \\
\nonumber - & \left(\int_{(t_1, t_2, t_3, t_4, t_5) \in L_{804}} \frac{\omega\left(\frac{1-t_1-t_2-t_3-t_4-t_5}{t_5} \right)}{t_1 t_2 t_3 t_4 t_5^2} d t_5 d t_4 d t_3 d t_2 d t_1 \right) \\
\nonumber + & \left(\int_{(t_1, t_2, t_3, t_4, t_5) \in L_{805}} \frac{\omega\left(\frac{1-t_1-t_2-t_3-t_4-t_5}{t_5} \right)}{t_1 t_2 t_3 t_4 t_5^2} d t_5 d t_4 d t_3 d t_2 d t_1 \right) \\
\nonumber - & \left(\int_{(t_1, t_2, t_3, t_4, t_5, t_6) \in L_{806}} \frac{\omega\left(\frac{1-t_1-t_2-t_3-t_4-t_5-t_6}{t_6} \right)}{t_1 t_2 t_3 t_4 t_5 t_6^2} d t_6 d t_5 d t_4 d t_3 d t_2 d t_1 \right) \\
\nonumber + & \left(\int_{\tau}^{\frac{3}{7}} \frac{\omega\left(\frac{1-t_1}{t_1} \right)}{t_1^2} d t_1 \right) \\
=&\ E_{801}(\theta) + E_{802}(\theta) - E_{803}(\theta) - E_{804}(\theta) + E_{805}(\theta) - E_{806}(\theta) + E_{807}(\theta),
\end{align}
where
\begin{align}
\nonumber L_{802}(\boldsymbol{\alpha}_{3}) :=&\ \left\{ \kappa \leqslant \alpha_1 < \tau,\ \kappa \leqslant \alpha_3 < \min\left(\alpha_2, \frac{1}{2}(1-\alpha_1 -\alpha_2) \right),\ \boldsymbol{\alpha}_{3} \notin \boldsymbol{G}_{3}, \right. \\
\nonumber & \quad \text{either } \boldsymbol{\alpha}_{3} \notin \boldsymbol{U}_3 \text{ or } (\alpha_1, \alpha_2, \alpha_3, \alpha_3) \notin \boldsymbol{U}_4, \\
\nonumber & \left. \quad \kappa \leqslant \alpha_1 < \frac{1}{2},\ \kappa \leqslant \alpha_2 < \min\left(\alpha_1, \frac{1}{2}(1-\alpha_1) \right) \right\}, \\
\nonumber L_{803}(\boldsymbol{\alpha}_{4}) :=&\ \left\{\kappa \leqslant \alpha_1 < \tau,\ \kappa \leqslant \alpha_3 < \min\left(\alpha_2, \frac{1}{2}(1-\alpha_1 -\alpha_2) \right),\ \boldsymbol{\alpha}_{3} \notin \boldsymbol{G}_{3}, \right. \\
\nonumber & \quad \text{either } \boldsymbol{\alpha}_{3} \notin \boldsymbol{U}_3 \text{ or } (\alpha_1, \alpha_2, \alpha_3, \alpha_3) \notin \boldsymbol{U}_4, \\
\nonumber & \quad \alpha_3 < \alpha_4 < \frac{1}{2}(1-\alpha_1-\alpha_2-\alpha_3),\ \boldsymbol{\alpha}_{4} \in \boldsymbol{G}_{4}, \\
\nonumber & \left. \quad \kappa \leqslant \alpha_1 < \frac{1}{2},\ \kappa \leqslant \alpha_2 < \min\left(\alpha_1, \frac{1}{2}(1-\alpha_1) \right) \right\}, \\
\nonumber L_{804}(\boldsymbol{\alpha}_{5}) :=&\ \left\{\kappa \leqslant \alpha_1 < \tau,\ \kappa \leqslant \alpha_3 < \min\left(\alpha_2, \frac{1}{2}(1-\alpha_1 -\alpha_2) \right),\ \boldsymbol{\alpha}_{3} \notin \boldsymbol{G}_{3}, \right. \\
\nonumber & \quad \text{either } \boldsymbol{\alpha}_{3} \notin \boldsymbol{U}_3 \text{ or } (\alpha_1, \alpha_2, \alpha_3, \alpha_3) \notin \boldsymbol{U}_4, \\
\nonumber & \quad \alpha_3 < \alpha_4 < \frac{1}{2}(1-\alpha_1-\alpha_2-\alpha_3),\ \boldsymbol{\alpha}_{4} \notin \boldsymbol{G}_{4}, \\
\nonumber & \quad \alpha_4 < \alpha_5 < \frac{1}{2}(1-\alpha_1-\alpha_2-\alpha_3-\alpha_4),\ \boldsymbol{\alpha}_{5} \in \boldsymbol{G}_{5}, \\
\nonumber & \left. \quad \kappa \leqslant \alpha_1 < \frac{1}{2},\ \kappa \leqslant \alpha_2 < \min\left(\alpha_1, \frac{1}{2}(1-\alpha_1) \right) \right\}, \\
\nonumber L_{805}(\boldsymbol{\alpha}_{5}) :=&\ \left\{\kappa \leqslant \alpha_1 < \tau,\ \kappa \leqslant \alpha_3 < \min\left(\alpha_2, \frac{1}{2}(1-\alpha_1 -\alpha_2) \right),\ \boldsymbol{\alpha}_{3} \notin \boldsymbol{G}_{3}, \right. \\
\nonumber & \quad \boldsymbol{\alpha}_{3} \in \boldsymbol{U}_3,\ (\alpha_1, \alpha_2, \alpha_3, \alpha_3) \in \boldsymbol{U}_4, \\
\nonumber & \quad \kappa \leqslant \alpha_4 < \min\left(\alpha_3, \frac{1}{2}(1-\alpha_1 -\alpha_2-\alpha_3) \right),\ \boldsymbol{\alpha}_{4} \notin \boldsymbol{G}_{4}, \\
\nonumber & \quad \kappa \leqslant \alpha_5 < \min\left(\alpha_4, \frac{1}{2}(1-\alpha_1 -\alpha_2-\alpha_3-\alpha_4) \right),\ \boldsymbol{\alpha}_{5} \notin \boldsymbol{G}_{5}, \\
\nonumber & \left. \quad \kappa \leqslant \alpha_1 < \frac{1}{2},\ \kappa \leqslant \alpha_2 < \min\left(\alpha_1, \frac{1}{2}(1-\alpha_1) \right) \right\}, \\
\nonumber L_{806}(\boldsymbol{\alpha}_{6}) :=&\ \left\{\kappa \leqslant \alpha_1 < \tau,\ \kappa \leqslant \alpha_3 < \min\left(\alpha_2, \frac{1}{2}(1-\alpha_1 -\alpha_2) \right),\ \boldsymbol{\alpha}_{3} \notin \boldsymbol{G}_{3}, \right. \\
\nonumber & \quad \boldsymbol{\alpha}_{3} \in \boldsymbol{U}_3,\ (\alpha_1, \alpha_2, \alpha_3, \alpha_3) \in \boldsymbol{U}_4, \\
\nonumber & \quad \kappa \leqslant \alpha_4 < \min\left(\alpha_3, \frac{1}{2}(1-\alpha_1 -\alpha_2-\alpha_3) \right),\ \boldsymbol{\alpha}_{4} \notin \boldsymbol{G}_{4}, \\
\nonumber & \quad \kappa \leqslant \alpha_5 < \min\left(\alpha_4, \frac{1}{2}(1-\alpha_1 -\alpha_2-\alpha_3-\alpha_4) \right),\ \boldsymbol{\alpha}_{5} \notin \boldsymbol{G}_{5}, \\
\nonumber & \quad \alpha_5 < \alpha_6 < \frac{1}{2}(1-\alpha_1-\alpha_2-\alpha_3-\alpha_4-\alpha_5),\ \boldsymbol{\alpha}_{6} \in \boldsymbol{G}_{6}, \\
\nonumber & \left. \quad \kappa \leqslant \alpha_1 < \frac{1}{2},\ \kappa \leqslant \alpha_2 < \min\left(\alpha_1, \frac{1}{2}(1-\alpha_1) \right) \right\}.
\end{align}

Finally, for $\frac{17}{32} \leqslant \theta < \frac{7}{13}$ we have
\begin{equation}
C(\theta) \leqslant C_{8}(\theta) := 1 + \left(E_{801}(\theta) + E_{802}(\theta) - E_{803}(\theta) - E_{804}(\theta) + E_{805}(\theta) - E_{806}(\theta) + E_{807}(\theta) \right).
\end{equation}

\begin{center}
\begin{tabular}{|c|c|c|c|c|c|c|c|}
\hline \boldmath{$\theta$} & \boldmath{$C_{8}(\theta)$} & \boldmath{$\theta$} & \boldmath{$C_{8}(\theta)$} & \boldmath{$\theta$} & \boldmath{$C_{8}(\theta)$} & \boldmath{$\theta$} & \boldmath{$C_{8}(\theta)$} \\
\hline $0.5316$ & $1.8603$ & $0.5342$ & $1.87985$ & $0.5353$ & $1.90671$ & $0.5373$ & $1.93236$ \\
\hline $0.5328$ & $1.86462$ & $0.5343$ & $1.89006$ & $0.5355$ & $1.91223$ & $0.5374$ & $1.93399$ \\
\hline $0.533$ & $1.86571$ & $0.5345$ & $1.89323$ & $0.5358$ & $1.91673$ & $0.5376$ & $1.93644$ \\
\hline $0.5333$ & $1.86834$ & $0.5348$ & $1.8991$ & $0.536$ & $1.91998$ & $0.5377$ & $1.93829$ \\
\hline $0.5335$ & $1.87103$ & $0.5349$ & $1.90152$ & $0.5363$ & $1.92343$ & $0.5378$ & $1.94163$ \\
\hline $0.534$ & $1.87798$ & $0.5351$ & $1.90373$ & $0.5365$ & $1.92673$ & $0.5381$ & $1.94862$ \\
\hline $0.5341$ & $1.87885$ & $0.5352$ & $1.90524$ & $0.537$ & $1.92762$ & $0.5384$ & $1.94911$ \\
\hline
\end{tabular}
\end{center}

\subsection{Case 9. $\frac{7}{13} \leqslant \theta < \frac{6}{11}$} From here, ${U}_{j}$ and ${U}_{j}^{\prime}$ do not cover the same region. Since we have $\tau^{\prime} = \frac{5-6\theta}{7} < \frac{2}{7} = \tau$ and $\kappa^{\prime} = \frac{11-20\theta}{6} < \frac{3-5\theta}{7} = \kappa$ in this subrange, Buchstab's identity gives us
\begin{align}
\nonumber S\left(\mathcal{A}, (2x)^{\frac{1}{2}} \right) =&\ S\left(\mathcal{A}, x^{\kappa} \right) - \sum_{\kappa \leqslant \alpha_1 < \frac{1}{2}} S\left(\mathcal{A}_{p_1}, p_1 \right) \\
\nonumber =&\ S\left(\mathcal{A}, x^{\kappa} \right) - \sum_{\kappa \leqslant \alpha_1 < \tau^{\prime}} S\left(\mathcal{A}_{p_1}, p_1 \right) - \sum_{\tau^{\prime} \leqslant \alpha_1 < \tau} S\left(\mathcal{A}_{p_1}, p_1 \right) - \sum_{\tau \leqslant \alpha_1 \leqslant \frac{3}{7}} S\left(\mathcal{A}_{p_1}, p_1 \right) \\
\nonumber & - \sum_{\frac{3}{7} < \alpha_1 < 1 - \theta} S\left(\mathcal{A}_{p_1}, p_1 \right) - \sum_{1 - \theta \leqslant \alpha_1 < \frac{1}{2}} S\left(\mathcal{A}_{p_1}, p_1 \right) \\
=&\ S_{91} - S_{92} - S_{93} - S_{94} - S_{95} - S_{96}.
\end{align}
We give asymptotic formulas for $S_{91}$ and $S_{95}$, and discard $S_{94}$ and $S_{96}$. For $S_{92}$ and $S_{93}$, we need to check several conditions. For $S_{92}$ we have $\alpha_2 < \alpha_1 < \frac{5-6\theta}{7}$. Since $\frac{8\theta-2}{7} > \frac{5-6\theta}{7}$, we have $\boldsymbol{\alpha}_{2} \in \boldsymbol{T}_{2}^{*}$ and $\boldsymbol{\alpha}_{2} \in \boldsymbol{U}_{2}$. Now we can use Buchstab's identity to get
\begin{align}
\nonumber S_{92} =&\ \sum_{\kappa \leqslant \alpha_1 < \tau^{\prime}} S\left(\mathcal{A}_{p_1}, p_1 \right) \\
\nonumber =&\ \sum_{\kappa \leqslant \alpha_1 < \tau^{\prime}} S\left(\mathcal{A}_{p_1}, x^{\kappa} \right) - \sum_{\substack{\kappa \leqslant \alpha_2 < \alpha_1 < \tau^{\prime} \\ \boldsymbol{\alpha}_{2} \in \boldsymbol{G}_{2} }} S\left(\mathcal{A}_{p_1 p_2}, p_2 \right) - \sum_{\substack{\kappa \leqslant \alpha_2 < \alpha_1 < \tau^{\prime} \\ \boldsymbol{\alpha}_{2} \notin \boldsymbol{G}_{2} }} S\left(\mathcal{A}_{p_1 p_2}, x^{\kappa} \right) \\
\nonumber & + \sum_{\substack{\kappa \leqslant \alpha_2 < \alpha_1 < \tau^{\prime} \\ \boldsymbol{\alpha}_{2} \notin \boldsymbol{G}_{2} \\ \kappa \leqslant \alpha_3 < \min\left(\alpha_2, \frac{1}{2}(1-\alpha_1 -\alpha_2)\right) \\ \boldsymbol{\alpha}_{3} \in \boldsymbol{G}_{3} }} S\left(\mathcal{A}_{p_1 p_2 p_3}, p_3 \right) + \sum_{\substack{\kappa \leqslant \alpha_2 < \alpha_1 < \tau^{\prime} \\ \boldsymbol{\alpha}_{2} \notin \boldsymbol{G}_{2} \\ \kappa \leqslant \alpha_3 < \min\left(\alpha_2, \frac{1}{2}(1-\alpha_1 -\alpha_2)\right) \\ \boldsymbol{\alpha}_{3} \notin \boldsymbol{G}_{3} }} S\left(\mathcal{A}_{p_1 p_2 p_3}, p_3 \right) \\
=&\ S_{921} - S_{922} - S_{923} + S_{924} + S_{925}.
\end{align}
Now we can find the loss from $S_{92}$ by the exactly same process used in $S_{82}$. For $S_{93}$ we shall use a similar process. Since $\alpha_1 \geqslant \tau^{\prime}$ in $S_{93}$, we cannot ensure that $\boldsymbol{\alpha}_{2} \in \boldsymbol{T}_{2}^{*}$ after a Buchstab iteration. However, by the discussion in \textbf{Case 6}, we only need to consider the case $\alpha_2 < \alpha_1 < \frac{\theta}{2}$. Since $\theta < \frac{6}{11}$, we find that $\left(\frac{\theta}{2}, \frac{\theta}{2}, 2 \theta - 1 \right) \in \boldsymbol{S}_{3}$, hence $(\alpha_1, \alpha_2, 2 \theta - 1) \in \boldsymbol{S}_{3}$ and $\boldsymbol{\alpha}_{2} \in \boldsymbol{U}_{2}^{\prime}$. By a similar Buchstab iteration process (with $\kappa^{\prime}$ and $\tau^{\prime}$), we get
\begin{align}
\nonumber S_{93} =&\ \sum_{\tau^{\prime} \leqslant \alpha_1 < \tau} S\left(\mathcal{A}_{p_1}, p_1 \right) \\
\nonumber =&\ \sum_{\tau^{\prime} \leqslant \alpha_1 < \tau} S\left(\mathcal{A}_{p_1}, x^{\kappa^{\prime}} \right) - \sum_{\substack{\tau^{\prime} \leqslant \alpha_1 < \tau \\ \kappa^{\prime} \leqslant \alpha_2 < \min\left(\alpha_1, \frac{1}{2}(1-\alpha_1)\right) \\ \boldsymbol{\alpha}_{2} \in \boldsymbol{G}_{2} }} S\left(\mathcal{A}_{p_1 p_2}, p_2 \right) - \sum_{\substack{\tau^{\prime} \leqslant \alpha_1 < \tau \\ \kappa^{\prime} \leqslant \alpha_2 < \min\left(\alpha_1, \frac{1}{2}(1-\alpha_1)\right) \\ \boldsymbol{\alpha}_{2} \notin \boldsymbol{G}_{2} }} S\left(\mathcal{A}_{p_1 p_2}, x^{\kappa^{\prime}} \right) \\
\nonumber & + \sum_{\substack{\tau^{\prime} \leqslant \alpha_1 < \tau \\ \kappa^{\prime} \leqslant \alpha_2 < \min\left(\alpha_1, \frac{1}{2}(1-\alpha_1)\right) \\ \boldsymbol{\alpha}_{2} \notin \boldsymbol{G}_{2} \\ \kappa^{\prime} \leqslant \alpha_3 < \min\left(\alpha_2, \frac{1}{2}(1-\alpha_1 -\alpha_2)\right) \\ \boldsymbol{\alpha}_{3} \in \boldsymbol{G}_{3} }} S\left(\mathcal{A}_{p_1 p_2 p_3}, p_3 \right) + \sum_{\substack{\tau^{\prime} \leqslant \alpha_1 < \tau \\ \kappa^{\prime} \leqslant \alpha_2 < \min\left(\alpha_1, \frac{1}{2}(1-\alpha_1)\right) \\ \boldsymbol{\alpha}_{2} \notin \boldsymbol{G}_{2} \\ \kappa^{\prime} \leqslant \alpha_3 < \min\left(\alpha_2, \frac{1}{2}(1-\alpha_1 -\alpha_2)\right) \\ \boldsymbol{\alpha}_{3} \notin \boldsymbol{G}_{3} }} S\left(\mathcal{A}_{p_1 p_2 p_3}, p_3 \right) \\
=&\ S_{931} - S_{932} - S_{933} + S_{934} + S_{935}.
\end{align}
We can find the loss from $S_{93}$ by almost the same process as used in $S_{92}$. We need to replace many $\kappa$ with $\kappa^{\prime}$ and $\boldsymbol{U}_{j}$ with $\boldsymbol{U}_{j}^{\prime}$ in the resulting integrals. We note that seven-dimensional sums are considered in both $S_{92}$ and $S_{93}$. Putting those integrals together, the total loss when $\frac{7}{13} \leqslant \theta < \frac{6}{11}$ can be bounded by
\begin{align}
\nonumber & \left(\int_{1-\theta}^{\frac{1}{2}} \frac{\omega\left(\frac{1-t_1}{t_1} \right)}{t_1^2} d t_1 \right) \\
\nonumber + & \left(\int_{(t_1, t_2, t_3) \in L_{902}} \frac{\omega\left(\frac{1-t_1-t_2-t_3}{t_3} \right)}{t_1 t_2 t_3^2} d t_3 d t_2 d t_1 \right) \\
\nonumber - & \left(\int_{(t_1, t_2, t_3, t_4) \in L_{903}} \frac{\omega\left(\frac{1-t_1-t_2-t_3-t_4}{t_4} \right)}{t_1 t_2 t_3 t_4^2} d t_4 d t_3 d t_2 d t_1 \right) \\
\nonumber - & \left(\int_{(t_1, t_2, t_3, t_4, t_5) \in L_{904}} \frac{\omega\left(\frac{1-t_1-t_2-t_3-t_4-t_5}{t_5} \right)}{t_1 t_2 t_3 t_4 t_5^2} d t_5 d t_4 d t_3 d t_2 d t_1 \right) \\
\nonumber + & \left(\int_{(t_1, t_2, t_3, t_4, t_5) \in L_{905}} \frac{\omega\left(\frac{1-t_1-t_2-t_3-t_4-t_5}{t_5} \right)}{t_1 t_2 t_3 t_4 t_5^2} d t_5 d t_4 d t_3 d t_2 d t_1 \right) \\
\nonumber - & \left(\int_{(t_1, t_2, t_3, t_4, t_5, t_6) \in L_{906}} \frac{\omega\left(\frac{1-t_1-t_2-t_3-t_4-t_5-t_6}{t_6} \right)}{t_1 t_2 t_3 t_4 t_5 t_6^2} d t_6 d t_5 d t_4 d t_3 d t_2 d t_1 \right) \\
\nonumber - & \left(\int_{(t_1, t_2, t_3, t_4, t_5, t_6, t_7) \in L_{907}} \frac{\omega\left(\frac{1-t_1-t_2-t_3-t_4-t_5-t_6-t_7}{t_7} \right)}{t_1 t_2 t_3 t_4 t_5 t_6 t_7^2} d t_7 d t_6 d t_5 d t_4 d t_3 d t_2 d t_1 \right) \\
\nonumber + & \left(\int_{(t_1, t_2, t_3, t_4, t_5, t_6, t_7) \in L_{908}} \frac{\omega\left(\frac{1-t_1-t_2-t_3-t_4-t_5-t_6-t_7}{t_7} \right)}{t_1 t_2 t_3 t_4 t_5 t_6 t_7^2} d t_7 d t_6 d t_5 d t_4 d t_3 d t_2 d t_1 \right) \\
\nonumber + & \left(\int_{(t_1, t_2, t_3) \in L_{909}} \frac{\omega\left(\frac{1-t_1-t_2-t_3}{t_3} \right)}{t_1 t_2 t_3^2} d t_3 d t_2 d t_1 \right) \\
\nonumber - & \left(\int_{(t_1, t_2, t_3, t_4) \in L_{910}} \frac{\omega\left(\frac{1-t_1-t_2-t_3-t_4}{t_4} \right)}{t_1 t_2 t_3 t_4^2} d t_4 d t_3 d t_2 d t_1 \right) \\
\nonumber - & \left(\int_{(t_1, t_2, t_3, t_4, t_5) \in L_{911}} \frac{\omega\left(\frac{1-t_1-t_2-t_3-t_4-t_5}{t_5} \right)}{t_1 t_2 t_3 t_4 t_5^2} d t_5 d t_4 d t_3 d t_2 d t_1 \right) \\
\nonumber + & \left(\int_{(t_1, t_2, t_3, t_4, t_5) \in L_{912}} \frac{\omega\left(\frac{1-t_1-t_2-t_3-t_4-t_5}{t_5} \right)}{t_1 t_2 t_3 t_4 t_5^2} d t_5 d t_4 d t_3 d t_2 d t_1 \right) \\
\nonumber - & \left(\int_{(t_1, t_2, t_3, t_4, t_5, t_6) \in L_{913}} \frac{\omega\left(\frac{1-t_1-t_2-t_3-t_4-t_5-t_6}{t_6} \right)}{t_1 t_2 t_3 t_4 t_5 t_6^2} d t_6 d t_5 d t_4 d t_3 d t_2 d t_1 \right) \\
\nonumber - & \left(\int_{(t_1, t_2, t_3, t_4, t_5, t_6, t_7) \in L_{914}} \frac{\omega\left(\frac{1-t_1-t_2-t_3-t_4-t_5-t_6-t_7}{t_7} \right)}{t_1 t_2 t_3 t_4 t_5 t_6 t_7^2} d t_7 d t_6 d t_5 d t_4 d t_3 d t_2 d t_1 \right) \\
\nonumber + & \left(\int_{(t_1, t_2, t_3, t_4, t_5, t_6, t_7) \in L_{915}} \frac{\omega\left(\frac{1-t_1-t_2-t_3-t_4-t_5-t_6-t_7}{t_7} \right)}{t_1 t_2 t_3 t_4 t_5 t_6 t_7^2} d t_7 d t_6 d t_5 d t_4 d t_3 d t_2 d t_1 \right) \\
\nonumber + & \left(\int_{\tau}^{\frac{3}{7}} \frac{\omega\left(\frac{1-t_1}{t_1} \right)}{t_1^2} d t_1 \right) \\
\nonumber =&\ E_{901}(\theta) + E_{902}(\theta) - E_{903}(\theta) - E_{904}(\theta) + E_{905}(\theta) - E_{906}(\theta) - E_{907}(\theta) + E_{908}(\theta) \\
& + E_{909}(\theta) - E_{910}(\theta) - E_{911}(\theta) + E_{912}(\theta) - E_{913}(\theta) - E_{914}(\theta) + E_{915}(\theta) + E_{916}(\theta),
\end{align}
where
\begin{align}
\nonumber L_{902}(\boldsymbol{\alpha}_{3}) :=&\ \left\{ \kappa \leqslant \alpha_1 < \tau^{\prime},\ \kappa \leqslant \alpha_3 < \min\left(\alpha_2, \frac{1}{2}(1-\alpha_1 -\alpha_2) \right),\ \boldsymbol{\alpha}_{3} \notin \boldsymbol{G}_{3}, \right. \\
\nonumber & \quad \text{either } \boldsymbol{\alpha}_{3} \notin \boldsymbol{U}_3 \text{ or } (\alpha_1, \alpha_2, \alpha_3, \alpha_3) \notin \boldsymbol{U}_4, \\
\nonumber & \left. \quad \kappa \leqslant \alpha_1 < \frac{1}{2},\ \kappa \leqslant \alpha_2 < \min\left(\alpha_1, \frac{1}{2}(1-\alpha_1) \right) \right\}, \\
\nonumber L_{903}(\boldsymbol{\alpha}_{4}) :=&\ \left\{\kappa \leqslant \alpha_1 < \tau^{\prime},\ \kappa \leqslant \alpha_3 < \min\left(\alpha_2, \frac{1}{2}(1-\alpha_1 -\alpha_2) \right),\ \boldsymbol{\alpha}_{3} \notin \boldsymbol{G}_{3}, \right. \\
\nonumber & \quad \text{either } \boldsymbol{\alpha}_{3} \notin \boldsymbol{U}_3 \text{ or } (\alpha_1, \alpha_2, \alpha_3, \alpha_3) \notin \boldsymbol{U}_4, \\
\nonumber & \quad \alpha_3 < \alpha_4 < \frac{1}{2}(1-\alpha_1-\alpha_2-\alpha_3),\ \boldsymbol{\alpha}_{4} \in \boldsymbol{G}_{4}, \\
\nonumber & \left. \quad \kappa \leqslant \alpha_1 < \frac{1}{2},\ \kappa \leqslant \alpha_2 < \min\left(\alpha_1, \frac{1}{2}(1-\alpha_1) \right) \right\}, \\
\nonumber L_{904}(\boldsymbol{\alpha}_{5}) :=&\ \left\{\kappa \leqslant \alpha_1 < \tau^{\prime},\ \kappa \leqslant \alpha_3 < \min\left(\alpha_2, \frac{1}{2}(1-\alpha_1 -\alpha_2) \right),\ \boldsymbol{\alpha}_{3} \notin \boldsymbol{G}_{3}, \right. \\
\nonumber & \quad \text{either } \boldsymbol{\alpha}_{3} \notin \boldsymbol{U}_3 \text{ or } (\alpha_1, \alpha_2, \alpha_3, \alpha_3) \notin \boldsymbol{U}_4, \\
\nonumber & \quad \alpha_3 < \alpha_4 < \frac{1}{2}(1-\alpha_1-\alpha_2-\alpha_3),\ \boldsymbol{\alpha}_{4} \notin \boldsymbol{G}_{4}, \\
\nonumber & \quad \alpha_4 < \alpha_5 < \frac{1}{2}(1-\alpha_1-\alpha_2-\alpha_3-\alpha_4),\ \boldsymbol{\alpha}_{5} \in \boldsymbol{G}_{5}, \\
\nonumber & \left. \quad \kappa \leqslant \alpha_1 < \frac{1}{2},\ \kappa \leqslant \alpha_2 < \min\left(\alpha_1, \frac{1}{2}(1-\alpha_1) \right) \right\}, \\
\nonumber L_{905}(\boldsymbol{\alpha}_{5}) :=&\ \left\{\kappa \leqslant \alpha_1 < \tau^{\prime},\ \kappa \leqslant \alpha_3 < \min\left(\alpha_2, \frac{1}{2}(1-\alpha_1 -\alpha_2) \right),\ \boldsymbol{\alpha}_{3} \notin \boldsymbol{G}_{3}, \right. \\
\nonumber & \quad \boldsymbol{\alpha}_{3} \in \boldsymbol{U}_3,\ (\alpha_1, \alpha_2, \alpha_3, \alpha_3) \in \boldsymbol{U}_4, \\
\nonumber & \quad \kappa \leqslant \alpha_4 < \min\left(\alpha_3, \frac{1}{2}(1-\alpha_1 -\alpha_2-\alpha_3) \right),\ \boldsymbol{\alpha}_{4} \notin \boldsymbol{G}_{4}, \\
\nonumber & \quad \kappa \leqslant \alpha_5 < \min\left(\alpha_4, \frac{1}{2}(1-\alpha_1 -\alpha_2-\alpha_3-\alpha_4) \right),\ \boldsymbol{\alpha}_{5} \notin \boldsymbol{G}_{5}, \\
\nonumber & \quad \text{either } \boldsymbol{\alpha}_{5} \notin \boldsymbol{U}_5 \text{ or } (\alpha_1, \alpha_2, \alpha_3, \alpha_4, \alpha_5, \alpha_5) \notin \boldsymbol{U}_6, \\
\nonumber & \left. \quad \kappa \leqslant \alpha_1 < \frac{1}{2},\ \kappa \leqslant \alpha_2 < \min\left(\alpha_1, \frac{1}{2}(1-\alpha_1) \right) \right\}, \\
\nonumber L_{906}(\boldsymbol{\alpha}_{6}) :=&\ \left\{\kappa \leqslant \alpha_1 < \tau^{\prime},\ \kappa \leqslant \alpha_3 < \min\left(\alpha_2, \frac{1}{2}(1-\alpha_1 -\alpha_2) \right),\ \boldsymbol{\alpha}_{3} \notin \boldsymbol{G}_{3}, \right. \\
\nonumber & \quad \boldsymbol{\alpha}_{3} \in \boldsymbol{U}_3,\ (\alpha_1, \alpha_2, \alpha_3, \alpha_3) \in \boldsymbol{U}_4, \\
\nonumber & \quad \kappa \leqslant \alpha_4 < \min\left(\alpha_3, \frac{1}{2}(1-\alpha_1 -\alpha_2-\alpha_3) \right),\ \boldsymbol{\alpha}_{4} \notin \boldsymbol{G}_{4}, \\
\nonumber & \quad \kappa \leqslant \alpha_5 < \min\left(\alpha_4, \frac{1}{2}(1-\alpha_1 -\alpha_2-\alpha_3-\alpha_4) \right),\ \boldsymbol{\alpha}_{5} \notin \boldsymbol{G}_{5}, \\
\nonumber & \quad \text{either } \boldsymbol{\alpha}_{5} \notin \boldsymbol{U}_5 \text{ or } (\alpha_1, \alpha_2, \alpha_3, \alpha_4, \alpha_5, \alpha_5) \notin \boldsymbol{U}_6, \\
\nonumber & \quad \alpha_5 < \alpha_6 < \frac{1}{2}(1-\alpha_1-\alpha_2-\alpha_3-\alpha_4-\alpha_5),\ \boldsymbol{\alpha}_{6} \in \boldsymbol{G}_{6}, \\
\nonumber & \left. \quad \kappa \leqslant \alpha_1 < \frac{1}{2},\ \kappa \leqslant \alpha_2 < \min\left(\alpha_1, \frac{1}{2}(1-\alpha_1) \right) \right\}, \\
\nonumber L_{907}(\boldsymbol{\alpha}_{7}) :=&\ \left\{\kappa \leqslant \alpha_1 < \tau^{\prime},\ \kappa \leqslant \alpha_3 < \min\left(\alpha_2, \frac{1}{2}(1-\alpha_1 -\alpha_2) \right),\ \boldsymbol{\alpha}_{3} \notin \boldsymbol{G}_{3}, \right. \\
\nonumber & \quad \boldsymbol{\alpha}_{3} \in \boldsymbol{U}_3,\ (\alpha_1, \alpha_2, \alpha_3, \alpha_3) \in \boldsymbol{U}_4, \\
\nonumber & \quad \kappa \leqslant \alpha_4 < \min\left(\alpha_3, \frac{1}{2}(1-\alpha_1 -\alpha_2-\alpha_3) \right),\ \boldsymbol{\alpha}_{4} \notin \boldsymbol{G}_{4}, \\
\nonumber & \quad \kappa \leqslant \alpha_5 < \min\left(\alpha_4, \frac{1}{2}(1-\alpha_1 -\alpha_2-\alpha_3-\alpha_4) \right),\ \boldsymbol{\alpha}_{5} \notin \boldsymbol{G}_{5}, \\
\nonumber & \quad \text{either } \boldsymbol{\alpha}_{5} \notin \boldsymbol{U}_5 \text{ or } (\alpha_1, \alpha_2, \alpha_3, \alpha_4, \alpha_5, \alpha_5) \notin \boldsymbol{U}_6, \\
\nonumber & \quad \alpha_5 < \alpha_6 < \frac{1}{2}(1-\alpha_1-\alpha_2-\alpha_3-\alpha_4-\alpha_5),\ \boldsymbol{\alpha}_{6} \notin \boldsymbol{G}_{6}, \\
\nonumber & \quad \alpha_6 < \alpha_7 < \frac{1}{2}(1-\alpha_1-\alpha_2-\alpha_3-\alpha_4-\alpha_5-\alpha_6),\ \boldsymbol{\alpha}_{7} \in \boldsymbol{G}_{7}, \\
\nonumber & \left. \quad \kappa \leqslant \alpha_1 < \frac{1}{2},\ \kappa \leqslant \alpha_2 < \min\left(\alpha_1, \frac{1}{2}(1-\alpha_1) \right) \right\}, \\
\nonumber L_{908}(\boldsymbol{\alpha}_{7}) :=&\ \left\{\kappa \leqslant \alpha_1 < \tau^{\prime},\ \kappa \leqslant \alpha_3 < \min\left(\alpha_2, \frac{1}{2}(1-\alpha_1 -\alpha_2) \right),\ \boldsymbol{\alpha}_{3} \notin \boldsymbol{G}_{3}, \right. \\
\nonumber & \quad \boldsymbol{\alpha}_{3} \in \boldsymbol{U}_3,\ (\alpha_1, \alpha_2, \alpha_3, \alpha_3) \in \boldsymbol{U}_4, \\
\nonumber & \quad \kappa \leqslant \alpha_4 < \min\left(\alpha_3, \frac{1}{2}(1-\alpha_1 -\alpha_2-\alpha_3) \right),\ \boldsymbol{\alpha}_{4} \notin \boldsymbol{G}_{4}, \\
\nonumber & \quad \kappa \leqslant \alpha_5 < \min\left(\alpha_4, \frac{1}{2}(1-\alpha_1 -\alpha_2-\alpha_3-\alpha_4) \right),\ \boldsymbol{\alpha}_{5} \notin \boldsymbol{G}_{5}, \\
\nonumber & \quad \boldsymbol{\alpha}_{5} \in \boldsymbol{U}_5,\ (\alpha_1, \alpha_2, \alpha_3, \alpha_4, \alpha_5, \alpha_5) \in \boldsymbol{U}_6, \\
\nonumber & \quad \kappa \leqslant \alpha_6 < \min\left(\alpha_5, \frac{1}{2}(1-\alpha_1 -\alpha_2-\alpha_3-\alpha_4-\alpha_5) \right),\ \boldsymbol{\alpha}_{6} \notin \boldsymbol{G}_{6}, \\
\nonumber & \quad \kappa \leqslant \alpha_7 < \min\left(\alpha_6, \frac{1}{2}(1-\alpha_1 -\alpha_2-\alpha_3-\alpha_4-\alpha_5-\alpha_6) \right),\ \boldsymbol{\alpha}_{7} \notin \boldsymbol{G}_{7}, \\
\nonumber & \left. \quad \kappa \leqslant \alpha_1 < \frac{1}{2},\ \kappa \leqslant \alpha_2 < \min\left(\alpha_1, \frac{1}{2}(1-\alpha_1) \right) \right\}, \\
\nonumber L_{909}(\boldsymbol{\alpha}_{3}) :=&\ \left\{ \tau^{\prime} \leqslant \alpha_1 < \tau,\ \kappa^{\prime} \leqslant \alpha_3 < \min\left(\alpha_2, \frac{1}{2}(1-\alpha_1 -\alpha_2) \right),\ \boldsymbol{\alpha}_{3} \notin \boldsymbol{G}_{3}, \right. \\
\nonumber & \quad \text{either } \boldsymbol{\alpha}_{3} \notin \boldsymbol{U}^{\prime}_3 \text{ or } (\alpha_1, \alpha_2, \alpha_3, \alpha_3) \notin \boldsymbol{U}^{\prime}_4, \\
\nonumber & \left. \quad \kappa \leqslant \alpha_1 < \frac{1}{2},\ \kappa^{\prime} \leqslant \alpha_2 < \min\left(\alpha_1, \frac{1}{2}(1-\alpha_1) \right) \right\}, \\
\nonumber L_{910}(\boldsymbol{\alpha}_{4}) :=&\ \left\{\tau^{\prime} \leqslant \alpha_1 < \tau,\ \kappa^{\prime} \leqslant \alpha_3 < \min\left(\alpha_2, \frac{1}{2}(1-\alpha_1 -\alpha_2) \right),\ \boldsymbol{\alpha}_{3} \notin \boldsymbol{G}_{3}, \right. \\
\nonumber & \quad \text{either } \boldsymbol{\alpha}_{3} \notin \boldsymbol{U}^{\prime}_3 \text{ or } (\alpha_1, \alpha_2, \alpha_3, \alpha_3) \notin \boldsymbol{U}^{\prime}_4, \\
\nonumber & \quad \alpha_3 < \alpha_4 < \frac{1}{2}(1-\alpha_1-\alpha_2-\alpha_3),\ \boldsymbol{\alpha}_{4} \in \boldsymbol{G}_{4}, \\
\nonumber & \left. \quad \kappa \leqslant \alpha_1 < \frac{1}{2},\ \kappa^{\prime} \leqslant \alpha_2 < \min\left(\alpha_1, \frac{1}{2}(1-\alpha_1) \right) \right\}, \\
\nonumber L_{911}(\boldsymbol{\alpha}_{5}) :=&\ \left\{\tau^{\prime} \leqslant \alpha_1 < \tau,\ \kappa^{\prime} \leqslant \alpha_3 < \min\left(\alpha_2, \frac{1}{2}(1-\alpha_1 -\alpha_2) \right),\ \boldsymbol{\alpha}_{3} \notin \boldsymbol{G}_{3}, \right. \\
\nonumber & \quad \text{either } \boldsymbol{\alpha}_{3} \notin \boldsymbol{U}^{\prime}_3 \text{ or } (\alpha_1, \alpha_2, \alpha_3, \alpha_3) \notin \boldsymbol{U}^{\prime}_4, \\
\nonumber & \quad \alpha_3 < \alpha_4 < \frac{1}{2}(1-\alpha_1-\alpha_2-\alpha_3),\ \boldsymbol{\alpha}_{4} \notin \boldsymbol{G}_{4}, \\
\nonumber & \quad \alpha_4 < \alpha_5 < \frac{1}{2}(1-\alpha_1-\alpha_2-\alpha_3-\alpha_4),\ \boldsymbol{\alpha}_{5} \in \boldsymbol{G}_{5}, \\
\nonumber & \left. \quad \kappa \leqslant \alpha_1 < \frac{1}{2},\ \kappa^{\prime} \leqslant \alpha_2 < \min\left(\alpha_1, \frac{1}{2}(1-\alpha_1) \right) \right\}, \\
\nonumber L_{912}(\boldsymbol{\alpha}_{5}) :=&\ \left\{\tau^{\prime} \leqslant \alpha_1 < \tau,\ \kappa^{\prime} \leqslant \alpha_3 < \min\left(\alpha_2, \frac{1}{2}(1-\alpha_1 -\alpha_2) \right),\ \boldsymbol{\alpha}_{3} \notin \boldsymbol{G}_{3}, \right. \\
\nonumber & \quad \boldsymbol{\alpha}_{3} \in \boldsymbol{U}^{\prime}_3,\ (\alpha_1, \alpha_2, \alpha_3, \alpha_3) \in \boldsymbol{U}^{\prime}_4, \\
\nonumber & \quad \kappa^{\prime} \leqslant \alpha_4 < \min\left(\alpha_3, \frac{1}{2}(1-\alpha_1 -\alpha_2-\alpha_3) \right),\ \boldsymbol{\alpha}_{4} \notin \boldsymbol{G}_{4}, \\
\nonumber & \quad \kappa^{\prime} \leqslant \alpha_5 < \min\left(\alpha_4, \frac{1}{2}(1-\alpha_1 -\alpha_2-\alpha_3-\alpha_4) \right),\ \boldsymbol{\alpha}_{5} \notin \boldsymbol{G}_{5}, \\
\nonumber & \quad \text{either } \boldsymbol{\alpha}_{5} \notin \boldsymbol{U}^{\prime}_5 \text{ or } (\alpha_1, \alpha_2, \alpha_3, \alpha_4, \alpha_5, \alpha_5) \notin \boldsymbol{U}^{\prime}_6, \\
\nonumber & \left. \quad \kappa \leqslant \alpha_1 < \frac{1}{2},\ \kappa^{\prime} \leqslant \alpha_2 < \min\left(\alpha_1, \frac{1}{2}(1-\alpha_1) \right) \right\}, \\
\nonumber L_{913}(\boldsymbol{\alpha}_{6}) :=&\ \left\{\tau^{\prime} \leqslant \alpha_1 < \tau,\ \kappa^{\prime} \leqslant \alpha_3 < \min\left(\alpha_2, \frac{1}{2}(1-\alpha_1 -\alpha_2) \right),\ \boldsymbol{\alpha}_{3} \notin \boldsymbol{G}_{3}, \right. \\
\nonumber & \quad \boldsymbol{\alpha}_{3} \in \boldsymbol{U}^{\prime}_3,\ (\alpha_1, \alpha_2, \alpha_3, \alpha_3) \in \boldsymbol{U}^{\prime}_4, \\
\nonumber & \quad \kappa^{\prime} \leqslant \alpha_4 < \min\left(\alpha_3, \frac{1}{2}(1-\alpha_1 -\alpha_2-\alpha_3) \right),\ \boldsymbol{\alpha}_{4} \notin \boldsymbol{G}_{4}, \\
\nonumber & \quad \kappa^{\prime} \leqslant \alpha_5 < \min\left(\alpha_4, \frac{1}{2}(1-\alpha_1 -\alpha_2-\alpha_3-\alpha_4) \right),\ \boldsymbol{\alpha}_{5} \notin \boldsymbol{G}_{5}, \\
\nonumber & \quad \text{either } \boldsymbol{\alpha}_{5} \notin \boldsymbol{U}^{\prime}_5 \text{ or } (\alpha_1, \alpha_2, \alpha_3, \alpha_4, \alpha_5, \alpha_5) \notin \boldsymbol{U}^{\prime}_6, \\
\nonumber & \quad \alpha_5 < \alpha_6 < \frac{1}{2}(1-\alpha_1-\alpha_2-\alpha_3-\alpha_4-\alpha_5),\ \boldsymbol{\alpha}_{6} \in \boldsymbol{G}_{6}, \\
\nonumber & \left. \quad \kappa \leqslant \alpha_1 < \frac{1}{2},\ \kappa^{\prime} \leqslant \alpha_2 < \min\left(\alpha_1, \frac{1}{2}(1-\alpha_1) \right) \right\}, \\
\nonumber L_{914}(\boldsymbol{\alpha}_{7}) :=&\ \left\{\tau^{\prime} \leqslant \alpha_1 < \tau,\ \kappa^{\prime} \leqslant \alpha_3 < \min\left(\alpha_2, \frac{1}{2}(1-\alpha_1 -\alpha_2) \right),\ \boldsymbol{\alpha}_{3} \notin \boldsymbol{G}_{3}, \right. \\
\nonumber & \quad \boldsymbol{\alpha}_{3} \in \boldsymbol{U}^{\prime}_3,\ (\alpha_1, \alpha_2, \alpha_3, \alpha_3) \in \boldsymbol{U}^{\prime}_4, \\
\nonumber & \quad \kappa^{\prime} \leqslant \alpha_4 < \min\left(\alpha_3, \frac{1}{2}(1-\alpha_1 -\alpha_2-\alpha_3) \right),\ \boldsymbol{\alpha}_{4} \notin \boldsymbol{G}_{4}, \\
\nonumber & \quad \kappa^{\prime} \leqslant \alpha_5 < \min\left(\alpha_4, \frac{1}{2}(1-\alpha_1 -\alpha_2-\alpha_3-\alpha_4) \right),\ \boldsymbol{\alpha}_{5} \notin \boldsymbol{G}_{5}, \\
\nonumber & \quad \text{either } \boldsymbol{\alpha}_{5} \notin \boldsymbol{U}^{\prime}_5 \text{ or } (\alpha_1, \alpha_2, \alpha_3, \alpha_4, \alpha_5, \alpha_5) \notin \boldsymbol{U}^{\prime}_6, \\
\nonumber & \quad \alpha_5 < \alpha_6 < \frac{1}{2}(1-\alpha_1-\alpha_2-\alpha_3-\alpha_4-\alpha_5),\ \boldsymbol{\alpha}_{6} \notin \boldsymbol{G}_{6}, \\
\nonumber & \quad \alpha_6 < \alpha_7 < \frac{1}{2}(1-\alpha_1-\alpha_2-\alpha_3-\alpha_4-\alpha_5-\alpha_6),\ \boldsymbol{\alpha}_{7} \in \boldsymbol{G}_{7}, \\
\nonumber & \left. \quad \kappa \leqslant \alpha_1 < \frac{1}{2},\ \kappa^{\prime} \leqslant \alpha_2 < \min\left(\alpha_1, \frac{1}{2}(1-\alpha_1) \right) \right\}, \\
\nonumber L_{915}(\boldsymbol{\alpha}_{7}) :=&\ \left\{\tau^{\prime} \leqslant \alpha_1 < \tau,\ \kappa^{\prime} \leqslant \alpha_3 < \min\left(\alpha_2, \frac{1}{2}(1-\alpha_1 -\alpha_2) \right),\ \boldsymbol{\alpha}_{3} \notin \boldsymbol{G}_{3}, \right. \\
\nonumber & \quad \boldsymbol{\alpha}_{3} \in \boldsymbol{U}^{\prime}_3,\ (\alpha_1, \alpha_2, \alpha_3, \alpha_3) \in \boldsymbol{U}^{\prime}_4, \\
\nonumber & \quad \kappa^{\prime} \leqslant \alpha_4 < \min\left(\alpha_3, \frac{1}{2}(1-\alpha_1 -\alpha_2-\alpha_3) \right),\ \boldsymbol{\alpha}_{4} \notin \boldsymbol{G}_{4}, \\
\nonumber & \quad \kappa^{\prime} \leqslant \alpha_5 < \min\left(\alpha_4, \frac{1}{2}(1-\alpha_1 -\alpha_2-\alpha_3-\alpha_4) \right),\ \boldsymbol{\alpha}_{5} \notin \boldsymbol{G}_{5}, \\
\nonumber & \quad \boldsymbol{\alpha}_{5} \in \boldsymbol{U}^{\prime}_5,\ (\alpha_1, \alpha_2, \alpha_3, \alpha_4, \alpha_5, \alpha_5) \in \boldsymbol{U}^{\prime}_6, \\
\nonumber & \quad \kappa^{\prime} \leqslant \alpha_6 < \min\left(\alpha_5, \frac{1}{2}(1-\alpha_1 -\alpha_2-\alpha_3-\alpha_4-\alpha_5) \right),\ \boldsymbol{\alpha}_{6} \notin \boldsymbol{G}_{6}, \\
\nonumber & \quad \kappa^{\prime} \leqslant \alpha_7 < \min\left(\alpha_6, \frac{1}{2}(1-\alpha_1 -\alpha_2-\alpha_3-\alpha_4-\alpha_5-\alpha_6) \right),\ \boldsymbol{\alpha}_{7} \notin \boldsymbol{G}_{7}, \\
\nonumber & \left. \quad \kappa \leqslant \alpha_1 < \frac{1}{2},\ \kappa^{\prime} \leqslant \alpha_2 < \min\left(\alpha_1, \frac{1}{2}(1-\alpha_1) \right) \right\}.
\end{align}

Finally, for $\frac{7}{13} \leqslant \theta < \frac{6}{11}$ we have
\begin{align}
\nonumber C(\theta) \leqslant C_{9}(\theta) :=&\ 1 + \left(E_{901}(\theta) + E_{902}(\theta) - E_{903}(\theta) - E_{904}(\theta) + E_{905}(\theta) - E_{906}(\theta) - E_{907}(\theta) + E_{908}(\theta) \right. \\
& \left. \qquad + E_{909}(\theta) - E_{910}(\theta) - E_{911}(\theta) + E_{912}(\theta) - E_{913}(\theta) - E_{914}(\theta) + E_{915}(\theta) + E_{916}(\theta) \right).
\end{align}

\begin{center}
\begin{tabular}{|c|c|c|c|c|c|c|c|}
\hline \boldmath{$\theta$} & \boldmath{$C_{9}(\theta)$} & \boldmath{$\theta$} & \boldmath{$C_{9}(\theta)$} & \boldmath{$\theta$} & \boldmath{$C_{9}(\theta)$} & \boldmath{$\theta$} & \boldmath{$C_{9}(\theta)$} \\
\hline $0.5385$ & $1.95009$ & $0.5402$ & $2.00356$ & $0.5422$ & $2.08389$ & $0.544$ & $2.16035$ \\
\hline $0.5388$ & $1.95279$ & $0.5405$ & $2.023$ & $0.5425$ & $2.10289$ & $0.5442$ & $2.16134$ \\
\hline $0.539$ & $1.95677$ & $0.5408$ & $2.03985$ & $0.5428$ & $2.10903$ & $0.5445$ & $2.18608$ \\
\hline $0.5392$ & $1.95916$ & $0.541$ & $2.05815$ & $0.543$ & $2.12223$ & $0.5448$ & $2.20478$ \\
\hline $0.5395$ & $1.96639$ & $0.5412$ & $2.06933$ & $0.5432$ & $2.12238$ & $0.545$ & $2.2179$ \\
\hline $0.5398$ & $1.986$ & $0.5415$ & $2.0785$ & $0.5435$ & $2.13851$ & $0.5452$ & $2.22554$ \\
\hline $0.54$ & $1.99921$ & $0.542$ & $2.07929$ & $0.5438$ & $2.15308$ & $0.5454$ & $2.22589$ \\
\hline
\end{tabular}
\end{center}

\subsection{Case 10. $\frac{6}{11} \leqslant \theta \leqslant 0.679$}
In the last case, we cannot ensure that $(\alpha_1, \alpha_2, 2 \theta - 1) \in \boldsymbol{S}_{3}$ since $\theta \geqslant \frac{6}{11}$. Using Buchstab's identity, we have
\begin{align}
\nonumber S\left(\mathcal{A}, (2x)^{\frac{1}{2}} \right) =&\ S\left(\mathcal{A}, x^{\kappa} \right) - \sum_{\kappa \leqslant \alpha_1 < \frac{1}{2}} S\left(\mathcal{A}_{p_1}, p_1 \right) \\
\nonumber =&\ S\left(\mathcal{A}, x^{\kappa} \right) - \sum_{\kappa \leqslant \alpha_1 < \tau} S\left(\mathcal{A}_{p_1}, p_1 \right) - \sum_{\tau \leqslant \alpha_1 \leqslant \frac{3}{7}} S\left(\mathcal{A}_{p_1}, p_1 \right) \\
\nonumber & - \sum_{\frac{3}{7} < \alpha_1 < 1 - \theta} S\left(\mathcal{A}_{p_1}, p_1 \right) - \sum_{1 - \theta \leqslant \alpha_1 < \frac{1}{2}} S\left(\mathcal{A}_{p_1}, p_1 \right) \\
=&\ S_{X1} - S_{X2} - S_{X3} - S_{X4} - S_{X5}.
\end{align}
The next decomposing process is quite similar to the process in \textbf{Case 9}. The only difference is that we do not need to consider
$$
E_{909}(\theta) - E_{910}(\theta) - E_{911}(\theta) + E_{912}(\theta) - E_{913}(\theta) - E_{914}(\theta) + E_{915}(\theta)
$$
since $\tau = \tau^{\prime} = \frac{5-6\theta}{7}$ now. The total loss when $\frac{6}{11} \leqslant \theta < \frac{4}{7}$ is no more than
\begin{align}
\nonumber & \left(\int_{1-\theta}^{\frac{1}{2}} \frac{\omega\left(\frac{1-t_1}{t_1} \right)}{t_1^2} d t_1 \right) \\
\nonumber + & \left(\int_{(t_1, t_2, t_3) \in L_{X02}} \frac{\omega\left(\frac{1-t_1-t_2-t_3}{t_3} \right)}{t_1 t_2 t_3^2} d t_3 d t_2 d t_1 \right) \\
\nonumber - & \left(\int_{(t_1, t_2, t_3, t_4) \in L_{X03}} \frac{\omega\left(\frac{1-t_1-t_2-t_3-t_4}{t_4} \right)}{t_1 t_2 t_3 t_4^2} d t_4 d t_3 d t_2 d t_1 \right) \\
\nonumber - & \left(\int_{(t_1, t_2, t_3, t_4, t_5) \in L_{X04}} \frac{\omega\left(\frac{1-t_1-t_2-t_3-t_4-t_5}{t_5} \right)}{t_1 t_2 t_3 t_4 t_5^2} d t_5 d t_4 d t_3 d t_2 d t_1 \right) \\
\nonumber + & \left(\int_{(t_1, t_2, t_3, t_4, t_5) \in L_{X05}} \frac{\omega\left(\frac{1-t_1-t_2-t_3-t_4-t_5}{t_5} \right)}{t_1 t_2 t_3 t_4 t_5^2} d t_5 d t_4 d t_3 d t_2 d t_1 \right) \\
\nonumber - & \left(\int_{(t_1, t_2, t_3, t_4, t_5, t_6) \in L_{X06}} \frac{\omega\left(\frac{1-t_1-t_2-t_3-t_4-t_5-t_6}{t_6} \right)}{t_1 t_2 t_3 t_4 t_5 t_6^2} d t_6 d t_5 d t_4 d t_3 d t_2 d t_1 \right) \\
\nonumber - & \left(\int_{(t_1, t_2, t_3, t_4, t_5, t_6, t_7) \in L_{X07}} \frac{\omega\left(\frac{1-t_1-t_2-t_3-t_4-t_5-t_6-t_7}{t_7} \right)}{t_1 t_2 t_3 t_4 t_5 t_6 t_7^2} d t_7 d t_6 d t_5 d t_4 d t_3 d t_2 d t_1 \right) \\
\nonumber + & \left(\int_{(t_1, t_2, t_3, t_4, t_5, t_6, t_7) \in L_{X08}} \frac{\omega\left(\frac{1-t_1-t_2-t_3-t_4-t_5-t_6-t_7}{t_7} \right)}{t_1 t_2 t_3 t_4 t_5 t_6 t_7^2} d t_7 d t_6 d t_5 d t_4 d t_3 d t_2 d t_1 \right) \\
\nonumber + & \left(\int_{\tau}^{\frac{3}{7}} \frac{\omega\left(\frac{1-t_1}{t_1} \right)}{t_1^2} d t_1 \right) \\
=&\ E_{X01}(\theta) + E_{X02}(\theta) - E_{X03}(\theta) - E_{X04}(\theta) + E_{X05}(\theta) - E_{X06}(\theta) - E_{X07}(\theta) + E_{X08}(\theta) + E_{X09}(\theta),
\end{align}
where
\begin{align}
\nonumber L_{X02}(\boldsymbol{\alpha}_{3}) :=&\ \left\{ \kappa \leqslant \alpha_1 < \tau,\ \kappa \leqslant \alpha_3 < \min\left(\alpha_2, \frac{1}{2}(1-\alpha_1 -\alpha_2) \right),\ \boldsymbol{\alpha}_{3} \notin \boldsymbol{G}_{3}, \right. \\
\nonumber & \quad \text{either } \boldsymbol{\alpha}_{3} \notin \boldsymbol{U}_3 \text{ or } (\alpha_1, \alpha_2, \alpha_3, \alpha_3) \notin \boldsymbol{U}_4, \\
\nonumber & \left. \quad \kappa \leqslant \alpha_1 < \frac{1}{2},\ \kappa \leqslant \alpha_2 < \min\left(\alpha_1, \frac{1}{2}(1-\alpha_1) \right) \right\}, \\
\nonumber L_{X03}(\boldsymbol{\alpha}_{4}) :=&\ \left\{\kappa \leqslant \alpha_1 < \tau,\ \kappa \leqslant \alpha_3 < \min\left(\alpha_2, \frac{1}{2}(1-\alpha_1 -\alpha_2) \right),\ \boldsymbol{\alpha}_{3} \notin \boldsymbol{G}_{3}, \right. \\
\nonumber & \quad \text{either } \boldsymbol{\alpha}_{3} \notin \boldsymbol{U}_3 \text{ or } (\alpha_1, \alpha_2, \alpha_3, \alpha_3) \notin \boldsymbol{U}_4, \\
\nonumber & \quad \alpha_3 < \alpha_4 < \frac{1}{2}(1-\alpha_1-\alpha_2-\alpha_3),\ \boldsymbol{\alpha}_{4} \in \boldsymbol{G}_{4}, \\
\nonumber & \left. \quad \kappa \leqslant \alpha_1 < \frac{1}{2},\ \kappa \leqslant \alpha_2 < \min\left(\alpha_1, \frac{1}{2}(1-\alpha_1) \right) \right\}, \\
\nonumber L_{X04}(\boldsymbol{\alpha}_{5}) :=&\ \left\{\kappa \leqslant \alpha_1 < \tau,\ \kappa \leqslant \alpha_3 < \min\left(\alpha_2, \frac{1}{2}(1-\alpha_1 -\alpha_2) \right),\ \boldsymbol{\alpha}_{3} \notin \boldsymbol{G}_{3}, \right. \\
\nonumber & \quad \text{either } \boldsymbol{\alpha}_{3} \notin \boldsymbol{U}_3 \text{ or } (\alpha_1, \alpha_2, \alpha_3, \alpha_3) \notin \boldsymbol{U}_4, \\
\nonumber & \quad \alpha_3 < \alpha_4 < \frac{1}{2}(1-\alpha_1-\alpha_2-\alpha_3),\ \boldsymbol{\alpha}_{4} \notin \boldsymbol{G}_{4}, \\
\nonumber & \quad \alpha_4 < \alpha_5 < \frac{1}{2}(1-\alpha_1-\alpha_2-\alpha_3-\alpha_4),\ \boldsymbol{\alpha}_{5} \in \boldsymbol{G}_{5}, \\
\nonumber & \left. \quad \kappa \leqslant \alpha_1 < \frac{1}{2},\ \kappa \leqslant \alpha_2 < \min\left(\alpha_1, \frac{1}{2}(1-\alpha_1) \right) \right\}, \\
\nonumber L_{X05}(\boldsymbol{\alpha}_{5}) :=&\ \left\{\kappa \leqslant \alpha_1 < \tau,\ \kappa \leqslant \alpha_3 < \min\left(\alpha_2, \frac{1}{2}(1-\alpha_1 -\alpha_2) \right),\ \boldsymbol{\alpha}_{3} \notin \boldsymbol{G}_{3}, \right. \\
\nonumber & \quad \boldsymbol{\alpha}_{3} \in \boldsymbol{U}_3,\ (\alpha_1, \alpha_2, \alpha_3, \alpha_3) \in \boldsymbol{U}_4, \\
\nonumber & \quad \kappa \leqslant \alpha_4 < \min\left(\alpha_3, \frac{1}{2}(1-\alpha_1 -\alpha_2-\alpha_3) \right),\ \boldsymbol{\alpha}_{4} \notin \boldsymbol{G}_{4}, \\
\nonumber & \quad \kappa \leqslant \alpha_5 < \min\left(\alpha_4, \frac{1}{2}(1-\alpha_1 -\alpha_2-\alpha_3-\alpha_4) \right),\ \boldsymbol{\alpha}_{5} \notin \boldsymbol{G}_{5}, \\
\nonumber & \quad \text{either } \boldsymbol{\alpha}_{5} \notin \boldsymbol{U}_5 \text{ or } (\alpha_1, \alpha_2, \alpha_3, \alpha_4, \alpha_5, \alpha_5) \notin \boldsymbol{U}_6, \\
\nonumber & \left. \quad \kappa \leqslant \alpha_1 < \frac{1}{2},\ \kappa \leqslant \alpha_2 < \min\left(\alpha_1, \frac{1}{2}(1-\alpha_1) \right) \right\}, \\
\nonumber L_{X06}(\boldsymbol{\alpha}_{6}) :=&\ \left\{\kappa \leqslant \alpha_1 < \tau,\ \kappa \leqslant \alpha_3 < \min\left(\alpha_2, \frac{1}{2}(1-\alpha_1 -\alpha_2) \right),\ \boldsymbol{\alpha}_{3} \notin \boldsymbol{G}_{3}, \right. \\
\nonumber & \quad \boldsymbol{\alpha}_{3} \in \boldsymbol{U}_3,\ (\alpha_1, \alpha_2, \alpha_3, \alpha_3) \in \boldsymbol{U}_4, \\
\nonumber & \quad \kappa \leqslant \alpha_4 < \min\left(\alpha_3, \frac{1}{2}(1-\alpha_1 -\alpha_2-\alpha_3) \right),\ \boldsymbol{\alpha}_{4} \notin \boldsymbol{G}_{4}, \\
\nonumber & \quad \kappa \leqslant \alpha_5 < \min\left(\alpha_4, \frac{1}{2}(1-\alpha_1 -\alpha_2-\alpha_3-\alpha_4) \right),\ \boldsymbol{\alpha}_{5} \notin \boldsymbol{G}_{5}, \\
\nonumber & \quad \text{either } \boldsymbol{\alpha}_{5} \notin \boldsymbol{U}_5 \text{ or } (\alpha_1, \alpha_2, \alpha_3, \alpha_4, \alpha_5, \alpha_5) \notin \boldsymbol{U}_6, \\
\nonumber & \quad \alpha_5 < \alpha_6 < \frac{1}{2}(1-\alpha_1-\alpha_2-\alpha_3-\alpha_4-\alpha_5),\ \boldsymbol{\alpha}_{6} \in \boldsymbol{G}_{6}, \\
\nonumber & \left. \quad \kappa \leqslant \alpha_1 < \frac{1}{2},\ \kappa \leqslant \alpha_2 < \min\left(\alpha_1, \frac{1}{2}(1-\alpha_1) \right) \right\}, \\
\nonumber L_{X07}(\boldsymbol{\alpha}_{7}) :=&\ \left\{\kappa \leqslant \alpha_1 < \tau,\ \kappa \leqslant \alpha_3 < \min\left(\alpha_2, \frac{1}{2}(1-\alpha_1 -\alpha_2) \right),\ \boldsymbol{\alpha}_{3} \notin \boldsymbol{G}_{3}, \right. \\
\nonumber & \quad \boldsymbol{\alpha}_{3} \in \boldsymbol{U}_3,\ (\alpha_1, \alpha_2, \alpha_3, \alpha_3) \in \boldsymbol{U}_4, \\
\nonumber & \quad \kappa \leqslant \alpha_4 < \min\left(\alpha_3, \frac{1}{2}(1-\alpha_1 -\alpha_2-\alpha_3) \right),\ \boldsymbol{\alpha}_{4} \notin \boldsymbol{G}_{4}, \\
\nonumber & \quad \kappa \leqslant \alpha_5 < \min\left(\alpha_4, \frac{1}{2}(1-\alpha_1 -\alpha_2-\alpha_3-\alpha_4) \right),\ \boldsymbol{\alpha}_{5} \notin \boldsymbol{G}_{5}, \\
\nonumber & \quad \text{either } \boldsymbol{\alpha}_{5} \notin \boldsymbol{U}_5 \text{ or } (\alpha_1, \alpha_2, \alpha_3, \alpha_4, \alpha_5, \alpha_5) \notin \boldsymbol{U}_6, \\
\nonumber & \quad \alpha_5 < \alpha_6 < \frac{1}{2}(1-\alpha_1-\alpha_2-\alpha_3-\alpha_4-\alpha_5),\ \boldsymbol{\alpha}_{6} \notin \boldsymbol{G}_{6}, \\
\nonumber & \quad \alpha_6 < \alpha_7 < \frac{1}{2}(1-\alpha_1-\alpha_2-\alpha_3-\alpha_4-\alpha_5-\alpha_6),\ \boldsymbol{\alpha}_{7} \in \boldsymbol{G}_{7}, \\
\nonumber & \left. \quad \kappa \leqslant \alpha_1 < \frac{1}{2},\ \kappa \leqslant \alpha_2 < \min\left(\alpha_1, \frac{1}{2}(1-\alpha_1) \right) \right\}, \\
\nonumber L_{X08}(\boldsymbol{\alpha}_{7}) :=&\ \left\{\kappa \leqslant \alpha_1 < \tau,\ \kappa \leqslant \alpha_3 < \min\left(\alpha_2, \frac{1}{2}(1-\alpha_1 -\alpha_2) \right),\ \boldsymbol{\alpha}_{3} \notin \boldsymbol{G}_{3}, \right. \\
\nonumber & \quad \boldsymbol{\alpha}_{3} \in \boldsymbol{U}_3,\ (\alpha_1, \alpha_2, \alpha_3, \alpha_3) \in \boldsymbol{U}_4, \\
\nonumber & \quad \kappa \leqslant \alpha_4 < \min\left(\alpha_3, \frac{1}{2}(1-\alpha_1 -\alpha_2-\alpha_3) \right),\ \boldsymbol{\alpha}_{4} \notin \boldsymbol{G}_{4}, \\
\nonumber & \quad \kappa \leqslant \alpha_5 < \min\left(\alpha_4, \frac{1}{2}(1-\alpha_1 -\alpha_2-\alpha_3-\alpha_4) \right),\ \boldsymbol{\alpha}_{5} \notin \boldsymbol{G}_{5}, \\
\nonumber & \quad \boldsymbol{\alpha}_{5} \in \boldsymbol{U}_5,\ (\alpha_1, \alpha_2, \alpha_3, \alpha_4, \alpha_5, \alpha_5) \in \boldsymbol{U}_6, \\
\nonumber & \quad \kappa \leqslant \alpha_6 < \min\left(\alpha_5, \frac{1}{2}(1-\alpha_1 -\alpha_2-\alpha_3-\alpha_4-\alpha_5) \right),\ \boldsymbol{\alpha}_{6} \notin \boldsymbol{G}_{6}, \\
\nonumber & \quad \kappa \leqslant \alpha_7 < \min\left(\alpha_6, \frac{1}{2}(1-\alpha_1 -\alpha_2-\alpha_3-\alpha_4-\alpha_5-\alpha_6) \right),\ \boldsymbol{\alpha}_{7} \notin \boldsymbol{G}_{7}, \\
\nonumber & \left. \quad \kappa \leqslant \alpha_1 < \frac{1}{2},\ \kappa \leqslant \alpha_2 < \min\left(\alpha_1, \frac{1}{2}(1-\alpha_1) \right) \right\}.
\end{align}

For $\frac{6}{11} \leqslant \theta < \frac{4}{7}$ we have
\begin{align}
\nonumber C(\theta) \leqslant C_{X}(\theta) :=&\ 1 + \left(E_{X01}(\theta) + E_{X02}(\theta) - E_{X03}(\theta) - E_{X04}(\theta) \right. \\
& \left. \qquad + E_{X05}(\theta) - E_{X06}(\theta) - E_{X07}(\theta) + E_{X08}(\theta) + E_{X09}(\theta) \right).
\end{align}

\begin{center}
\begin{tabular}{|c|c|c|c|c|c|}
\hline \boldmath{$\theta$} & \boldmath{$C_{X}(\theta)$} & \boldmath{$\theta$} & \boldmath{$C_{X}(\theta)$} & \boldmath{$\theta$} & \boldmath{$C_{X}(\theta)$} \\
\hline $0.546$ & $2.33807$ & $0.553$ & $2.44031$ & $0.561$ & $2.68948$ \\
\hline $0.547$ & $2.35981$ & $0.555$ & $2.45396$ & $0.562$ & $2.71434$ \\
\hline $0.549$ & $2.3809$ & $0.556$ & $2.50757$ & $0.563$ & $2.80532$ \\
\hline $0.55$ & $2.39955$ & $0.558$ & $2.51193$ & $0.564$ & $2.87871$ \\
\hline $0.551$ & $2.43178$ & $0.56$ & $2.53449$ & $0.565$ & $2.95471$ \\
\hline
\end{tabular}
\end{center}

However, the alternative sieve approach may not be the best way to bound $C(\theta)$ when $\theta$ increases. Because of the rapid increase of the size of loss, using the Rosser--Iwaniec sieve is better than using Harman's sieve when $\theta$ is large. Building on the previous work of Fouvry \cite{Fouvry}, Baker and Harman \cite{676} showed that
\begin{equation}
C(\theta) \leqslant \frac{14}{12 - 13\theta} - \log\left(\frac{4(1-\theta)}{3\theta} \right)
\end{equation}
for $\frac{17}{32} \leqslant \theta < \frac{4}{7}$. After calculating, we find that the turning point is between $0.565$ and $0.566$. That is, we use (54) when $\theta < 0.565$ and (55) when $\theta \geqslant 0.565$. For $\theta \geqslant \frac{4}{7}$, Baker and Harman \cite{676} also got
\begin{align}
C(\theta) \leqslant
\begin{cases}
\frac{14}{12 - 13\theta}, & \frac{4}{7} \leqslant \theta < 0.6, \\
\frac{8}{3 - \theta}, & 0.6 \leqslant \theta \leqslant 0.679.
\end{cases}
\end{align}

\subsection{Proof of Theorem 1}
Note that $C(\theta)$ is monotonic increasing, we can use the various upper bounds for $C(\theta)$ in above 10 cases to show that
\begin{equation}
\int_{0.5}^{0.679} C(\theta) < 0.4993,
\end{equation}
and Theorem~\ref{t1} is now proved. The upper constants for the Brun--Titchmarsh theorem with almost all moduli $q \leqslant x^{\theta}$ are given in the above 10 tables.

\section{Applications}
Clearly our Theorem~\ref{t1} has many interesting applications (just like the previous Baker--Harman's result), and we state some of them in this section. Let $g > 1$ denote an integer and $f(X) \in \mathbb{Z}[X]$ denote a polynomial of positive degree that has a positive leading coefficient and no multiple roots in its splitting field. Put $u(n) = f\left(g^n\right)$. The following two applications focus on the quadratic fields $\mathbb{Q}\left(\sqrt{u(n)}\right)$. Note that this type of quadratic fields was first considered by Shanks \cite{Shanks} with the special case $u(n) = \left(2^n + 3\right)^2 - 8$. Let $Q_u(M, N; s)$ denote the number of integers $n \in [M + 1, M + N]$ for which $\mathbb{Q}\left(\sqrt{u(n)}\right) = \mathbb{Q}\left(\sqrt{s}\right)$, where $M, N, s \in \mathbb{Z}$, $N \geqslant 1$ and $s$ square-free. For the case $\text{deg } f \geqslant 3$, Luca and Shparlinski [\cite{LucaShparlinski2009}, Theorem 1.3] proved in 2009 that
\begin{equation}
Q_u(M, N; s) \ll N^{\frac{3}{2(1+\boldsymbol{\Theta})}} (\log N)^{\frac{4 + \boldsymbol{\Theta}}{1 + \boldsymbol{\Theta}}}.
\end{equation}
Clearly we have $Q_u(M, N; s) \ll N^{\frac{500}{559} + \varepsilon}$ with $\boldsymbol{\Theta} = 0.677$ given by Baker and Harman \cite{677} and $Q_u(M, N; s) \ll N^{\frac{3}{4} + \varepsilon}$ with conjectured value $\boldsymbol{\Theta} = 1$. Note that $\frac{500}{559} \approx 0.895$ and $\frac{3}{4} = 0.75$. In 2017, Banks and Shparlinski [\cite{BanksShparlinski2017}, Theorem 1.1] improved (58) to
\begin{equation}
Q_u(M, N; s) \ll N^{\frac{1}{2 \boldsymbol{\Theta} }} (\log N)^{2 - \frac{1}{\boldsymbol{\Theta}}}.
\end{equation}
Now we have $Q_u(M, N; s) \ll N^{\frac{500}{677} + \varepsilon}$ with $\boldsymbol{\Theta} = 0.677$ given by Baker and Harman \cite{677} and $Q_u(M, N; s) \ll N^{\frac{1}{2} + \varepsilon}$ with conjectured value $\boldsymbol{\Theta} = 1$. Note that $\frac{500}{677} \approx 0.739 < 0.75$, the unconditional result given by (59) is even better than the conditional result given by (58). As our first application, we apply our Theorem~\ref{t1} ($\boldsymbol{\Theta} = 0.679$) directly to (59) to give that
\begin{theorem1number}\label{t2}
$$
Q_u(M, N; s) \ll N^{\frac{500}{679}} (\log N)^{\frac{358}{679}}, \quad \text{where} \quad \frac{500}{679} \approx 0.737.
$$
\end{theorem1number}
\noindent We can also obtain the corresponding improved lower bound for $R_u(M, N)$, the number of distinct fields among $\mathbb{Q}\left(\sqrt{u(n)}\right)$ as $n$ varies over $M+1, \ldots, M+N$.

In the same paper, Banks and Shparlinski \cite{BanksShparlinski2017} also considered the bound for $Q_u(M, N; s)$ on average over square-free $s$. Their [\cite{BanksShparlinski2017}, Corollary 1.4] gives
\begin{align}
\sum_{s \leqslant S} Q_u(M, N; s) \leqslant
\begin{cases}
S^{1-\frac{1}{4 \boldsymbol{\Theta}}} N^{\frac{1}{2 \boldsymbol{\Theta}} + \varepsilon}, & S \leqslant N^{\frac{2(1-\boldsymbol{\Theta})}{1+3\boldsymbol{\Theta}}}, \\
S^{\frac{1}{2}} N^{\frac{3-\boldsymbol{\Theta}}{1+3\boldsymbol{\Theta}} + \varepsilon}, & N^{\frac{2(1-\boldsymbol{\Theta})}{1+3\boldsymbol{\Theta}}} < S \leqslant N^{\frac{4(1-\boldsymbol{\Theta})}{1+3\boldsymbol{\Theta}}}, \\
S^{\frac{3}{3+\boldsymbol{\Theta}}} N^{\frac{3-\boldsymbol{\Theta}}{3+\boldsymbol{\Theta}} + \varepsilon}, & N^{\frac{4(1-\boldsymbol{\Theta})}{1+3\boldsymbol{\Theta}}} < S \leqslant N^{\frac{2\boldsymbol{\Theta}}{3}}, \\
N ~(\text{trivial bound}), & N^{\frac{2\boldsymbol{\Theta}}{3}} < S.
\end{cases}
\end{align}
As our second application, using our Theorem~\ref{t1} ($\boldsymbol{\Theta} = 0.679$) we have
\begin{theorem1number}\label{t3}
\begin{align*}
\sum_{s \leqslant S} Q_u(M, N; s) \leqslant
\begin{cases}
S^{\frac{429}{679}} N^{\frac{500}{679} + \varepsilon}, & S \leqslant N^{\frac{642}{3037}}, \\
S^{\frac{1}{2}} N^{\frac{2321}{3037} + \varepsilon}, & N^{\frac{642}{3037}} < S \leqslant N^{\frac{1284}{3037}}, \\
S^{\frac{3000}{3679}} N^{\frac{2321}{3679} + \varepsilon}, & N^{\frac{1284}{3037}} < S \leqslant N^{\frac{679}{1500}}, \\
N, & N^{\frac{679}{1500}} < S.
\end{cases}
\end{align*}
\end{theorem1number}

\section*{Appendix: Shifted primes without large prime factors}
Since the shifting process destroys the multiplicative structure of prime numbers, it is conjectured that $P^{+}(p + a)$ has a distribution roughly the same as the distribution of $P^{+}(n)$ over all integers $n \leqslant x$. In fact, one has a special version of this general conjecture:
\begin{conjecture*}
There are infinitely many primes $p$ with $P^{+}(p + a) < p^{\boldsymbol{\vartheta}}$ and infinitely many primes $p$ with $P^{+}(p + a) > p^{\boldsymbol{\Theta}}$ for any $0 < \boldsymbol{\vartheta}, \boldsymbol{\Theta} < 1$.
\end{conjecture*}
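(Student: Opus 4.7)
The plan must begin by acknowledging that this conjecture is a deep open problem, strictly stronger than the twin prime conjecture: if one could push $\boldsymbol{\Theta}$ arbitrarily close to $1$, then for infinitely many $p$ the shift $p+a$ would be forced to have a prime factor of size at least $p^{1-\varepsilon}$, which by size must equal $p+a$ itself, i.e.\ $p+a$ is prime. My proposal therefore cannot be a full proof but only a description of the two natural attacks and the points where each stalls.

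For the large--prime--factor half, my plan is to iterate and refine the alternative sieve framework of Section~6. One would replace Lemma~\ref{l34} with successively stronger Type-II inputs valid at larger $\theta$, enlarge the asymptotic-formula regions $\boldsymbol{G}_{j}$ and $\boldsymbol{U}_{j}$ so as to shrink the loss integrals $E_{\bullet}(\theta)$, and extract further savings from reverse Buchstab iterations in the style of Cases~8--10. Incremental improvements of this kind might push $\boldsymbol{\Theta}$ past $0.68$, but the whole method ultimately caps out near $\boldsymbol{\Theta} = 7/10$, the natural ceiling imposed by the condition $Q < x^{0.7-\varepsilon^2}$ in Lemma~\ref{l34}, beyond which every currently available arithmetical ingredient breaks down.

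For the small--prime--factor half (smooth shifted primes), the plan is to detect primes $p \leqslant x$ with $P^{+}(p+a) \leqslant p^{\boldsymbol{\vartheta}}$ by a Harman-type lower-bound sieve for primes, using Bombieri--Vinogradov together with its Fouvry and Bombieri--Friedlander--Iwaniec variants as arithmetical input. This direction is classical and the current records establish the conjecture only for $\boldsymbol{\vartheta}$ above a fixed positive constant. Pushing $\boldsymbol{\vartheta}$ below any fixed positive value would, symmetrically, require equidistribution of primes in residue classes to moduli $q$ approaching $x$.

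The main obstacle on both sides is therefore identical and well known: the absence of Bombieri--Vinogradov-type distribution results for primes to moduli $q$ close to $x$, without further smoothness or bilinear-form restrictions. Until that barrier is crossed in adequate generality, only quantitative numerical improvements — precisely the kind this paper secures on the large--prime--factor side via Theorem~\ref{t1} — are feasible, and the conjecture in its full uniform form remains firmly out of reach.
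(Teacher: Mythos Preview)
The statement is labeled \emph{Conjecture} in the paper and is presented in the Appendix precisely as an open problem; the paper offers no proof and does not claim one. Your overall stance --- that no proof is currently possible and that only partial quantitative progress of the type in Theorem~\ref{t1} is feasible --- is therefore in agreement with the paper.

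That said, your opening justification contains a genuine error. You assert that the conjecture is ``strictly stronger than the twin prime conjecture'' because pushing $\boldsymbol{\Theta}$ close to $1$ would force $P^{+}(p+a)$ to equal $p+a$. This is false: a prime factor exceeding $p^{1-\varepsilon}$ need not equal $p+a$; for instance $p+a = 2q$ with $q$ prime already gives $P^{+}(p+a) = q \sim p/2 > p^{1-\varepsilon}$ for every fixed $\varepsilon>0$ once $p$ is large. The paper's introduction makes the correct (and much stronger) statement that twin primes is equivalent to $P^{+}(p+2) > p$, i.e.\ the exponent must actually reach $1$, not merely approach it. Thus the large--prime--factor half of the conjecture is not known to imply twin primes, and your argument for why the conjecture is ``deep'' in that particular sense does not stand. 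The conjecture is still wide open, but for the more mundane reason you correctly identify later: the lack of equidistribution results for primes to moduli $q$ approaching $x$.
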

Note that we have already mentioned the historical progress towards the second assertion in the Introduction. For the first assertion, Erd\H{o}s \cite{Erdos1935} first proved in 1935 that there exists some $\boldsymbol{\vartheta} < 1$ such that $P^{+}(p + a) < p^{\boldsymbol{\vartheta}}$ for infinitely many $p$. After that, many mathematicians improved the upper bound for $\boldsymbol{\vartheta}$. The progression of records for $\boldsymbol{\vartheta}$ can be seen in the following table.

\begin{center}
\begin{tabular}{|c|c|c|c|}
\hline \boldmath{$\boldsymbol{\vartheta}$} & \boldmath{$\approx$} & \textbf{Author} & \textbf{Year} \\
\hline $2 \sqrt{2} - 2$ & $0.8285$ & Wooldridge \cite{Wooldridge1979} & 1979 \\
\hline $\frac{625}{512 e}$ & $0.4491$ & Pomerance \cite{Pomerance1980} & 1980 \\
\hline $\frac{1}{2} \exp \left(- \frac{1}{2.73}\right)$ & $0.3465$ & Balog \cite{Balog1984} & 1984 \\
\hline $\frac{1}{2} \exp \left(- \frac{1}{2}\right)$ & $0.3033$ & Friedlander \cite{Friedlander1989} & 1989 \\
\hline $0.2961$ & $0.2961$ & Baker and Harman \cite{677} & 1998 \\
\hline $\frac{15}{32} \exp \left(- \frac{1}{2}\right)$ & $0.2844$ & Lichtman \cite{Lichtman2} & 2022 \\
\hline
\end{tabular}
\end{center}

However, we cannot improve Lichtman's result by applying any sieves since we do not have a distribution result for primes in arithmetic progressions with multi-linear forms of moduli larger than $x^{\frac{17}{32}}$.

\section*{Acknowledgements}
The author would like to thank Igor Shparlinski for providing information about the papers \cite{LucaShparlinski2009} and \cite{BanksShparlinski2017}.

\bibliographystyle{plain}
\bibliography{bib}
\end{document}